\documentclass[reqno,12pt]{amsart}
\usepackage{amsmath, amssymb, amsthm, amsfonts} 
\usepackage[english]{babel}
\usepackage{bbm}
\usepackage{graphicx}
\usepackage{subcaption} 
\usepackage{url}
\usepackage{soul}
\usepackage{epstopdf}
\usepackage[ruled,vlined]{algorithm2e}
\usepackage[a4paper,bindingoffset=0.5cm,left=2cm,right=2cm,top=2.5cm,bottom=2cm,footskip=.8cm]{geometry}
\usepackage{rotating}
\usepackage{amsbsy,enumerate,enumitem}
\usepackage{comment}
\usepackage{mathrsfs}
\usepackage{hyperref}
\usepackage{geometry}

\newcommand{\e}{\mathbb{E}}

\newcommand{\bs}{\boldsymbol}

\newcommand{\vb}{\vspace{3.2mm}}
\renewcommand{\hat}{\widehat}

\newcommand{\defD}{{\mathfrak D}}
\newcommand{\defDmin}{{\mathfrak D}_-}
\newcommand{\defDplus}{{\mathfrak D}_+}
\newcommand{\WS}[1]{{\color{blue}[WS: #1]}}

\newcommand{\vertiii}[1]{{\left\vert\kern-0.25ex\left\vert\kern-0.25ex\left\vert #1 \right\vert\kern-0.25ex\right\vert\kern-0.25ex\right\vert}}

\allowdisplaybreaks

\newcommand{\ind}[1]{{\bs 1}_{\{#1\}}}

\allowdisplaybreaks

\newtheorem{lemma}{Lemma}

\newtheorem{theorem}{Theorem}
\newtheorem{remark}{Remark}

\newtheorem{proposition}{Proposition}

\usepackage[utf8]{inputenc}
\usepackage{type1cm}         
\usepackage{multicol}        
\usepackage[bottom]{footmisc}

\usepackage{newtxtext}       %
\usepackage{newtxmath}       

\usepackage{pgfplots}

\usepackage{tikz}
\usepackage[mode=buildnew]{standalone}
\usetikzlibrary{automata,positioning}
\usetikzlibrary{decorations.pathreplacing,decorations.markings,shapes.geometric}
\usetikzlibrary{shapes,arrows}
\usetikzlibrary{backgrounds,calc,positioning}

\usepackage{pgfplots}
\pgfplotsset{compat=1.9}

\begin{document}

	\title[Finite-capacity MAP-driven queues]{Exact analysis of transient behavior\\ of finite-capacity MAP-driven queues}
\author[M. Mandjes, D. Rutgers, {\tiny and} W. Scheinhardt]{Michel Mandjes, Dani\"el Rutgers {\tiny and} Werner Scheinhardt}
	
	\begin{abstract} This paper studies the workload distribution of a finite-capacity queue driven by a spectrally one-sided Markov additive process (MAP). Our main result provides the Laplace–Stieltjes transform of the workload at an exponentially distributed time, thereby uniquely characterizing its transient distribution. The proposed approach combines several decompositions with established fluctuation-theoretic results for spectrally one-sided L\'evy processes. For the special case of Markov-modulated compound Poisson input, we additionally derive results for the idle time and the cumulative amount of lost work. We conclude this paper with a series of numerical experiments.

\vb

\noindent
{\sc Keywords.} Markov additive processes $\circ$ L\'evy processes $\circ$ fluctuation theory $\circ$ queueing $\circ$ finite capacity $\circ$ Laplace transforms $\circ$ overshoot distribution $\circ$ Markov modulation

\vb

\noindent
{\sc Affiliations.} 
MM is with Mathematical Institute, Leiden University, P.O. Box 9512,
2300 RA Leiden,
The Netherlands. He is also affiliated with Korteweg-de Vries Institute for Mathematics, University of Amsterdam, Amsterdam, The Netherlands; E{\sc urandom}, Eindhoven University of Technology, Eindhoven, The Netherlands; Amsterdam Business School, Faculty of Economics and Business, University of Amsterdam, Amsterdam, The Netherlands. 

\noindent
DR is with Mathematical Institute, Leiden University, P.O. Box 9512,
2300 RA Leiden,
The Netherlands. 

\noindent
WS is with Faculty of Electrical Engineering, Mathematics and Computer Science, University of Twente, 
P.O. Box 217, 7500 AE  Enschede, The Netherlands.

\noindent Date: {\it \today}.

\vb

\noindent
{\sc Acknowledgments.} 
MM's research has been partly funded by the NWO Gravitation project N{\sc etworks}, grant number 024.002.003.

\vb

\noindent
{\sc Email.} {\scriptsize \url{m.r.h.mandjes@math.leidenuniv.nl}, \url{d.t.rutgers@math.leidenuniv.nl}, \url{w.r.w.scheinhardt@utwente.nl}}.

\vb

\noindent
{\sc Corresponding author.} Dani\"el Rutgers.

	\end{abstract}

	\maketitle

    \newpage

\section{Introduction}\label{sec: intro}
In this paper, we study the workload of a queue that is driven by a {\it spectrally-positive Markov additive process} (MAP) $Y\equiv (Y(t))_{t\geqslant 0}$, with the special feature that an upper boundary $K>0$ is imposed on the workload level. Let $V\equiv (V(t))_{t\geqslant 0}$ denote the resulting doubly-reflected workload process, so that $0 \leqslant V(t)\leqslant K$ for all $t\geqslant 0$. 
Our objective is to uniquely characterize, for any initial workload $x \in [0,K]$, the distribution of $V(t)$ at an arbitrary time $t$. We achieve this characterization by identifying the distribution of $V(T_\beta)$, where $T_\beta$ is an exponentially distributed random variable with parameter $\beta>0$, sampled independently of everything else.
The driving process $Y$, initialized at $Y(0)=0$, is commonly referred to as the {\it free process}, or alternatively as the queue's {\it input process}.

It is noted that within our finite-capacity framework, the above formulation also covers the case where $Y$ is a spectrally-{\it negative} MAP. In this setting, the reflected process $(K-V(t))_{t\geqslant 0}$ is a doubly-reflected spectrally-positive MAP, and hence the results of this paper apply directly. This means that in this paper's main results we characterize the workload distribution of a finite-capacity queue driven by a {\it spectrally one-sided} MAP.

\medskip

{\it Model.} The spectrally-positive MAP is defined as follows. Let $J\equiv (J(t))_{t\geqslant0}$ be a continuous-time Markov chain on  $\defD :=\{1,\ldots,d\}$, which serves as the \emph{background process}, or {\it modulating process}. We do not impose any structure on this Markov chain; in particular we do not require it to be irreducible. Its transition matrix is given by
\[Q \equiv (q_{ij})_{i,j=1}^d,\]
with the convention $q_i := -q_{ii} = \sum_{j\neq i} q_{ij}$ for any $i\in\defD $. If the background process resides in state $i\in\defD $, then the workload process is driven by a spectrally-positive L\'evy process $Y_i\equiv (Y_i(t))_{t\geqslant0}$, i.e., a L\'evy process without negative jumps, that is characterized via its Laplace exponent $\varphi_i(\cdot)$. Examples of spectrally-positive L\'evy processes are a Brownian motion with drift, a compound Poisson process or a combination of the two (but the class is considerably broader). In our model the workload can also increase at transitions of the background process: in case of a transition from $i$ to $j\not=i$, a job arrives with a size that is distributed as the non-negative random variable $B_{ij}$ with Laplace-Stieltjes transform (LST) ${\mathscr B}_{ij}(\cdot)$. The evident independence properties are imposed. 

The process $V$, with initial condition $V(0)=x\in[0,K]$, is then the doubly-reflected version of $(x+Y(t))_{t\geqslant 0}$, where we refer to $K$ as the queue's {\it capacity}. As discussed in detail in e.g.\ \cite[\S 1]{AAGP}, this means that one can write, for any $t\geqslant 0$, 
\begin{equation}\label{eq: regu} 
    V(t) = x + Y(t) + U^-(t) - U^+(t),
\end{equation}
where $U^-\equiv (U^-(t))_{t\geqslant 0}$ and $U^+\equiv (U^+(t))_{t\geqslant 0}$ are non-decreasing right-continuous processes, sometimes referred to as {\it regulators}, that solve the associated (double sided) {\it Skorokhod problem}. Specifically,
the process $U^-$ can only increase when $V$ is at the lower boundary 0, and $U^+$ only
when $V$ is at the upper boundary $K$.
Note that we work with \emph{partial rejection}: if an arriving job does not fully fit into the buffer of capacity $K$, the workload level is truncated at $K$. It is crucial in our setup that the state space $\defD$ of the background process is \emph{finite}. MAPs with a countable state space have been studied as well; see, for example, associated laws of large numbers and central limit theorems in \cite{KR}.

\medskip

{\it Objective.} This paper aims to evaluate, for all $i,j\in\defD $ and $x\in[0, K]$, the LST
\begin{equation}\label{eq: def chi_ij}
    \chi_{ij}(x) \equiv \chi_{i,j}(x,\alpha,\beta) := \e_{x,i}\left( e^{-\alpha V(T_{\beta})} \bs{1}_{\{J(T_{\beta})=j\}} \right),
\end{equation}
for $\alpha\geqslant 0$ and $\beta > 0;$ the subscripts in the rightmost expression indicate that the initial condition is $V(0)=x$ and $J(0)=i.$ This object uniquely describes the distribution of the queue's transient workload. More particularly, it can, up to a multiplicative constant, be seen as a double transform; indeed, we can write
\begin{align*}
    \frac{\chi_{ij}(x)}{\beta} &= \int_0^\infty e^{-{\beta} t} \,\e_{x,i}\left( e^{-\alpha V(t)} \bs{1}_{\{J(t)=j\}} \right){\rm d}t\\&= \int_{0}^\infty \int_{y\in[0,K]} e^{-\alpha y-\beta t} \,{\mathbb P}_{x,i}\left( V(t)\in {\rm d}y, J(t)=j \right)\,{\rm d}t .
\end{align*}
This means that the probability measure ${\mathbb P}_{x,i}\left( V(t)\in {\rm d}y, J(t)=j \right)$ can be found  \cite{AW,dI} from the transform $\chi_{ij}(x,\alpha,\beta)$ by numerical Laplace inversion (with respect to $\alpha$ and $\beta$, that is); it is noted that \cite{dI} provides a technique particularly suited to handle the {\it double} inversion. 

\medskip

{\it Literature.} We proceed by giving a non-exhaustive overview of the related literature. 
Queues without a cap on the workload have been much more intensively studied than their finite-capacity counterparts, but the latter topic has attracted substantial attention, too. Within the branch of the literature on finite-capacity queues, early work focused on the characterization of the stationary workload distribution of M/G/1-type systems~\cite{JWC,TAK}, with subsequent studies addressing the time-dependent case~\cite{R1,R2}.  In later work, the finite-capacity queue was cast into the more general framework of a stochastic process under double-sided Skorokhod reflection \cite{AND,HOU,KRU}; in this framework the finite-capacity M/G/1 queue is interpreted as a compound Poisson process with upward jumps and negative drift upon which double reflection has been imposed. An in-depth survey on the broader class of doubly-reflected L\'evy processes can be found in \cite{AAGP}. 

Most of the literature on the workload in the Markov-modulated compound Poisson-fed queue pertains to the case of infinite-capacity and stationarity. There is a one-to-one correspondence to an associated Cram\'er-Lundberg model under regime switching, which renders the results of e.g.\ \cite{IV,DM,MKD} directly applicable; see also the general frameworks \cite{DIEK,MKD} and the textbook treatments in e.g.\ \cite{ASM2,AA}. The stationary distribution in the finite-capacity case is briefly discussed in \cite[\S 16]{AAGP}. Regarding the workload at an exponentially distributed time, we explicitly mention the contributions \cite{IV2}, which studies Markov-modulated Brownian motion under double reflection, and \cite[\S 3.3]{IV}, which treats MAPs under double reflection in the special case that the workload process starts at level $0$ or $K$ and that the background process is irreducible.  

\medskip

{\it Contributions.} 
Our work is novel in that it combines the elements of (i)~spectrally one-sided Markov additive input, without imposing any requirements on the chain structure of the underlying background process, (ii)~finite capacity (i.e., a cap on the workload), (iii)~time-dependent behavior, and (iv) a general starting level. In more detail, the contributions of this paper are the following:
\begin{itemize}
    \item[$\circ$] We succeed in providing a procedure that produces,  based on a series of carefully constructed decompositions, for any $i,j\in\defD $ and $x\in[0, K]$, the LST
    $\chi_{ij}(x)$ as defined in \eqref{eq: def chi_ij}, in terms of the $2d^2$ constants $\chi_{ij}(0)$ and $\chi_{ij}(K)$.  We then provide a system of  linear equations that uniquely characterizes these constants. 
    Notably, in our analysis we allow for background states to be `subordinator states', i.e., states $i$ for which the underlying L\'evy process $Y_i$ is almost surely non-decreasing, for which we rely on ideas from \cite{MKD}. 
    \item[$\circ$] In the case that $Y$ is a Markov-modulated compound Poisson process, we point out how this procedure can be generalized to also cover the total amount of work lost up to $T_{\beta}$, as well as the total amount of idle time up to $T_{\beta}$.
    \item[$\circ$] As a by-product, our approach can be used to compute the moments of the workload process and the probability that the system is empty at a given point in time. Both quantities can be obtained directly, without the need to derive them from the LST.
\end{itemize}

{\it Organization.} The paper begins in Section~\ref{sec: prelim} with a collection of results for the case $d=1$ (that is, when $Y$ is a spectrally-positive L\'evy process), which are used extensively throughout the remainder of the paper. In Section~\ref{sec: analysis}, we express $\chi_{ij}(x)$ in terms of a set of auxiliary objects, whose explicit evaluation is carried out in Section~\ref{sec: aux}. Section~\ref{sec: extensions} then analyzes the cumulative amount of lost work and the total idle time up to $T_\beta$ in the case where $Y$ is of Markov-modulated compound Poisson type. Finally, numerical experiments are presented in Section~\ref{sec: num}.

\section{Preliminaries}\label{sec: prelim}
In this section, we consider the case $d=1$, that is, a workload process being fed by a single spectrally-positive L\'evy process $Y$; cf.\ the analysis in \cite{PI}. We treat this L\'evy-driven case first because several of the results obtained here will be used in later sections when analyzing the more general MAP-driven workload model.

\medskip

In our approach a crucial role is played by the following stopping times: for $u_-, u_+ \geqslant 0$,
\begin{align*}
    \sigma(u_-) &:= \inf\{t \geqslant 0: -Y(t) > u_-\} \\
    \tau(u_+) &:= \inf\{t \geqslant 0: Y(t) > u_+\}.
\end{align*}
These stopping times should be interpreted as the first time the free process $Y$ drops below the level $u_-$ or \textit{strictly} exceeds the level $u_+$, respectively.
We also define what we from here on call {\it hitting probabilities}, which include the expiration of the `exponential clock' $T_\beta$ (a concept often referred to as `killing'). We define, for $u_-, u_+ \geqslant 0$ with $u_- + u_+ > 0$ and $\beta > 0$,
\begin{align*}
    \delta_{-}(u_-, u_+, \beta) &:= \mathbb{P}\left(\sigma(u_-) \leqslant \min\{\tau(u_+), T_{\beta}\}\right) \\
    \delta_{+}(u_-, u_+, \beta) &:= \mathbb{P}\left(\tau(u_+) \leqslant \min\{\sigma(u_-), T_{\beta}\}\right).
\end{align*}
The hitting probability $\delta_{-}(u_-, u_+, \beta)$ thus denotes the probability that the process $Y$ drops below the negative level $-u_-$ before it exceeds the positive level $u_+$ and before being killed; the hitting probability $\delta_{+}(u_-, u_+, \beta)$ has an analogous interpretation.

Let $\varphi(\alpha) := \log \mathbb{E}e^{-\alpha Y(1)}$ denote the Laplace exponent of $Y$, and let $\psi(\cdot)$ denote its right-inverse. The {\it (primary) scale function} $W^{(\beta)}(\cdot):\mathbb{R}\to[0,\infty)$ is then defined as the function whose Laplace transform satisfies
\begin{equation*}
    \int_0^\infty e^{-\alpha y} W^{(\beta)}(y)\,\mathrm{d}y = \frac{1}{\varphi(\alpha) - \beta},
\end{equation*}
for $\alpha \geqslant 0$ and $\beta>0$ whenever $\varphi(\alpha) > \beta$; we define $W^{(\beta)}(y):=0$ for $y<0$. We also work with the \textit{secondary scale function}
\begin{equation*}
    Z^{(\beta)}(u) := 1 + \beta\int_0^u W^{(\beta)}(y)\,\mathrm{d}y.
\end{equation*}
For a full discussion on the primary and secondary scale functions of spectrally one-sided L\'evy processes, which play a key role in fluctuation theory, we refer to e.g.\ \cite[\S VIII.3]{KYP}, \cite[\S VII.2]{BER}, and \cite{KUZ}. For more background on the numerical evaluation of scale functions, we refer to \cite{SUR}.

We analyze in Subsection \ref{ssec: workload dist levy} the distribution of the workload at the exponentially distributed time $T_\beta$ for an arbitrary initial workload, and in Subsection \ref{sec: prelim overshoot transform} the overshoot of the free process $Y$ over level $u\geqslant0$.

\subsection{Workload distribution}\label{ssec: workload dist levy}
This subsection studies the probability density
\begin{equation}\label{eq:cdf}
    \mathbb{P}_x(V(T_\beta) \in \mathrm{d}y) := \mathbb{P}(V(T_\beta) \in \mathrm{d}y\,|\,V(0) = x),
\end{equation}
for $x\in[0,K]$ and $y\in[0,K]$, and the corresponding LST
\begin{align}\label{eq:lst}
    \mathbb{E}_x \left(e^{-\alpha V(T_\beta)}\right) &:= \mathbb{E} \left(e^{-\alpha V(T_\beta)}\,\middle| \,V(0) = x\right)
\end{align}
for $x\in[0,K]$, $\alpha\geqslant 0$ and $\beta>0$. We distinguish between the cases where the L\'evy process $Y$ is a subordinator (i.e., non-decreasing) and where it is not. 

\subsubsection*{Non-subordinator case}\label{sec: prelim non-sub}
We first consider the case in which $Y$ is {\it not} a subordinator — for example, a compound Poisson process with negative drift between jumps or a Brownian motion. The hitting probabilities in this setting are well known from the literature and admit the following simple representations in terms of the scale functions.
\begin{lemma}\label{lemma: deltas Lévy non subord}
    For $u_-, u_+ \geqslant 0$ with $u_-+u_+ > 0$ and $\beta>0$,
    \begin{align*}
        \delta_{-}(u_-, u_+, \beta) &= \frac{W^{(\beta)}(u_+)}{W^{(\beta)}(u_- + u_+)}, \\
        \delta_{+}(u_-, u_+, \beta) &= Z^{(\beta)}(u_+) -  Z^{(\beta)}(u_-+ u_+) \frac{W^{(\beta)}(u_+)}{W^{(\beta)}(u_- + u_+)}.
    \end{align*}
\end{lemma}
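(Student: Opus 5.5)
The plan is to reduce the statement to the classical two-sided exit identities for a spectrally \emph{negative} L\'evy process by passing to $X:=-Y$. Since $Y$ has no negative jumps, $X$ has no positive jumps. Its Laplace exponent, in the usual convention $\log\mathbb{E}e^{\theta X(1)}$, is exactly $\varphi(\theta)=\log\mathbb{E}e^{-\theta Y(1)}$. So the scale functions $W^{(\beta)}$ and $Z^{(\beta)}$ defined above are precisely the $\beta$-scale functions of $X$ in the sense of \cite[\S VIII.3]{KYP}. The non-subordinator assumption on $Y$ means $X$ is not the negative of a subordinator, which is the standing assumption there.

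Next I translate the events. Start $X$ at $x:=u_+$ and set $a:=u_-+u_+$, so that $u_++X$ lives in $[0,a]$ as long as $Y$ stays in $[-u_-,u_+]$. The event $\{\sigma(u_-)<\tau(u_+)\}$ says that $Y$ exits strictly above $u_-$ in the $-Y$ direction first. This is the event that $x+X$ exceeds $a$, i.e.\ first passage upward of $X$, which is continuous (creeping) because $X$ has no positive jumps. Likewise $\{\tau(u_+)<\sigma(u_-)\}$ corresponds to $x+X$ passing strictly below $0$. Using independence of $T_\beta$, each probability $\mathbb{P}(\cdot\leqslant T_\beta)$ becomes an expectation $\mathbb{E}[e^{-\beta\,\cdot}\,;\,\cdot]$. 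Then I invoke \cite[Thm.~8.1]{KYP}:
\[
\mathbb{E}_x\bigl(e^{-\beta\tau_a^+};\tau_a^+<\tau_0^-\bigr)=\frac{W^{(\beta)}(x)}{W^{(\beta)}(a)},\qquad
\mathbb{E}_x\bigl(e^{-\beta\tau_0^-};\tau_0^-<\tau_a^+\bigr)=Z^{(\beta)}(x)-Z^{(\beta)}(a)\frac{W^{(\beta)}(x)}{W^{(\beta)}(a)}.
\]
Substituting $x=u_+$ and $a=u_-+u_+$ gives both formulas.

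The main obstacle is boundary bookkeeping, namely strict versus non-strict inequalities and edge cases. For the tie event $\{\sigma=\tau\}$, I will argue that it has probability zero, since the two exits occur at different levels. Events such as $\sigma=T_\beta$ are null because $T_\beta$ is continuous and independent. The definitions use strict crossings ($-Y>u_-$, $Y>u_+$), which match $\tau_a^+=\inf\{X>a\}$ and $\tau_0^-=\inf\{x+X<0\}$ exactly. The degenerate cases need separate checks. If $u_+=0$ and $X$ has bounded variation, then $W^{(\beta)}(0)>0$ and the formula still applies. If $X$ has unbounded variation, then $W^{(\beta)}(0)=0$, giving $\delta_-=0$ and $\delta_+=1$. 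This is consistent with $0$ being regular for $(-\infty,0)$ for $X$, which is the standard right-continuity caveat in Kyprianou's theorem. The case $u_-=0$ is direct, with $\delta_-$ evaluating to $1$ and $\delta_+$ to $0$. Finally, I would confirm that $W^{(\beta)}(a)>0$ for $a>0$, so the ratios are well defined.
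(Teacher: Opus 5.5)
Your proposal is correct and follows the same route as the paper, whose proof simply invokes \cite[Theorem 8.1]{KYP} for the spectrally negative process $X=-Y$ started at $u_+$ with upper level $u_-+u_+$. Your write-up makes explicit the ``minor adjustments'' the paper leaves implicit: the translation of $\sigma(u_-)$ and $\tau(u_+)$ into $\tau_a^+$ and $\tau_0^-$, turning killing by $T_\beta$ into the discount $e^{-\beta\,\cdot}$, and checking the boundary cases $u_\pm=0$.
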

\begin{proof}
    Follows immediately from \cite[Theorem 8.1]{KYP}; note that the result there is for spectrally-negative L\'evy processes not necessarily starting at~$0$, so minor adjustments are required.
\end{proof}

The following lemma states the main result of \cite{IP} and gives an expression for $\mathbb{P}_x(V(T_\beta) \in \mathrm{d}y)$ in terms of the scale functions.

\begin{lemma}\label{lemma: dist W init x}
    {\em (i)}~For $x\in[0,K]$, $y\in(0,K]$ and $\beta>0$,
    \begin{equation}\label{eq: dens W init x Levy}
        \mathbb{P}_x\left(V(T_\beta) \in \mathrm{d}y\right) = \left(\frac{Z^{(\beta)}(K-x)}{W^{(\beta)}(K)}\frac{\mathrm{d}}{\mathrm{d}y}W^{(\beta)}(y) - W^{(\beta)}(y-x)\right) \mathrm{d}y.
    \end{equation}
    {\em (ii)}~For $x\in[0,K]$ and $\beta>0$,
    \begin{equation}\label{eq: empty prob W init x Levy}
        \mathbb{P}_x\left(V(T_\beta) = 0\right) = Z^{(\beta)}(K-x)\frac{W^{(\beta)}(0)}{W^{(\beta)}(K)}.
    \end{equation}
\end{lemma}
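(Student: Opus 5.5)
We plan to first reduce the claim to a statement about the process started at the boundary $0$, using Lemma~\ref{lemma: deltas Lévy non subord}, and then identify the boundary distribution through its Laplace transform. Fix $x\in[0,K]$ and let $\tau_K$ be the first time $x+Y$ strictly exceeds $K$. Before $\min\{\tau_K,\sigma(x)\}$ the process $V$ coincides with $x+Y$; but reflection at $0$ does not stop the dynamics, so a cleaner split is by the first passage above $K$. Since $Y$ has no negative jumps, after reflection at $K$ the process restarts from exactly $K$. By the strong Markov property and memorylessness of $T_\beta$,
\begin{equation*}
\mathbb{P}_x(V(T_\beta)\in\mathrm{d}y)=\mathbb{P}_x\bigl(V(T_\beta)\in\mathrm{d}y,\,T_\beta<\tau^V_K\bigr)+\mathbb{E}_x\bigl(e^{-\beta\tau^V_K}\bigr)\,\mathbb{P}_K(V(T_\beta)\in\mathrm{d}y),
\end{equation*}
where $\tau^V_K$ is the first time $V$ reaches $K$. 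The first term is the killed resolvent of the singly-reflected (at $0$) process $x+Y$ stopped on exceeding $K$. By the standard identity for spectrally-positive processes reflected at their infimum (Pistorius' formula, obtainable from Lemma~\ref{lemma: deltas Lévy non subord} by a limiting/resolvent argument), this term equals
\begin{equation*}
\Bigl(\tfrac{Z^{(\beta)}(K-x)}{Z^{(\beta)}(K)}\,\tfrac{\mathrm{d}}{\mathrm{d}y}(\cdots)-W^{(\beta)}(y-x)\Bigr)\mathrm{d}y .
\end{equation*}
Rather than relying on that identity, we intend to derive the result directly via a martingale.

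The direct route works as follows. For $f(v)=e^{-\alpha v}$, It\^o/Kella--Whitt applied to $V$ in \eqref{eq: regu} gives
\begin{equation*}
e^{-\alpha V(t)}-(\varphi(\alpha))\int_0^t e^{-\alpha V(s)}\mathrm{d}s-\alpha\int_0^t e^{-\alpha V(s)}\mathrm{d}U^-(s)+\alpha\int_0^t e^{-\alpha V(s)}\mathrm{d}U^+(s)
\end{equation*}
is a martingale (jump overshoots at $K$ are absorbed into $U^+$, which is handled by the partial-rejection convention). Evaluating at $T_\beta$ gives
\begin{equation*}
(\beta-\varphi(\alpha))\,\mathbb{E}_x e^{-\alpha V(T_\beta)}=\beta e^{-\alpha x}-\alpha\beta\,\mathbb{E}_xL^-+\alpha\beta e^{-\alpha K}\mathbb{E}_xL^+,
\end{equation*}
with $L^\pm=\int_0^{T_\beta}\mathrm{d}U^\pm$. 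Here $\beta\,\mathbb{E}_x L^-$ equals $W^{(\beta)}(0)^{-1}$ times the atom at $0$ when $Y$ has bounded variation, and is a separate constant otherwise. Thus the transform is determined up to two constants $c_-(x),c_+(x)$.

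The candidate formula is then checked against this. The transform of $-W^{(\beta)}(y-x)$ over $[0,\infty)$ is $-e^{-\alpha x}/(\varphi(\alpha)-\beta)$, matching the $\beta e^{-\alpha x}$ term after multiplying by $\beta$. The transform of $(W^{(\beta)})'$ together with the atom $W^{(\beta)}(0)$ is $\alpha/(\varphi(\alpha)-\beta)$, matching the $L^-$ term. Truncating these integrals to $[0,K]$ produces an $e^{-\alpha K}$ term, which must be matched with the $L^+$ term. The constants are then fixed by requiring that the right-hand side be entire in $\alpha$, i.e.\ that it vanish at the root $\alpha=\psi(\beta)$ of $\varphi(\alpha)=\beta$; this is the usual Wiener--Hopf argument. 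Total mass one gives a second equation.

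The main obstacle is that this yields only two equations for two unknowns, but their solvability has to be matched with the specific normalization $Z^{(\beta)}(K-x)/W^{(\beta)}(K)$. I expect to pin down $c_-(x)$ by using Lemma~\ref{lemma: deltas Lévy non subord} together with the strong Markov property at $\sigma(x)$ and $\tau(K-x)$. The quantity $\delta_-$ then gives $\mathbb{P}_x$ of reaching $0$ before $K$, which is used to verify the $x$-dependence $Z^{(\beta)}(K-x)$. Part (ii) follows by reading off the atom.
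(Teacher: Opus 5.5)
The paper gives no proof of this lemma; it quotes it as a known resolvent identity for the doubly reflected process (cf.\ \cite{PI,IP}). A self-contained Kella--Whitt derivation is a reasonable alternative in principle. As written, however, yours has two genuine gaps, and the first route you sketch is abandoned with a placeholder.

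\textbf{First gap: the martingale is wrong when $Y$ has positive jumps.} At a time $s$ where a jump overshoots $K$, $U^+$ jumps by $\Delta U^+(s)$. The Kella--Whitt compensator then contributes $e^{-\alpha K}\bigl(1-e^{-\alpha\Delta U^+(s)}\bigr)$, not $\alpha e^{-\alpha K}\Delta U^+(s)$, and the partial-rejection convention does not change this. (The signs of the regulator terms in your displayed martingale are also reversed relative to the identity you then write down.) The correct identity is
\[
(\beta-\varphi(\alpha))\,\mathbb{E}_x e^{-\alpha V(T_\beta)}=\beta e^{-\alpha x}-\alpha c_-+e^{-\alpha K}h(\alpha),\qquad c_-:=\beta\,\mathbb{E}_xU^-(T_\beta),
\]
where $h(\alpha)=\alpha\beta\,\mathbb{E}_xU^{+,\mathrm{c}}(T_\beta)+\beta\,\mathbb{E}_x\sum_{s\leqslant T_\beta}\bigl(1-e^{-\alpha\Delta U^+(s)}\bigr)$. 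This $h$ is an unknown \emph{function} of $\alpha$ carrying the overshoot law; for compound Poisson input with exp($\mu$) jobs it is a constant times $\alpha/(\mu+\alpha)$. So the transform is not determined up to two constants, and your planned matching of the $e^{-\alpha K}$ terms fails exactly for the compound Poisson inputs the paper is about.

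\textbf{Second gap: even without jumps, your two conditions do not fix the constants.} At $\alpha=0$ the identity reads $\beta=\beta$, so ``total mass one'' is already built in and gives no equation. For a general spectrally positive $\varphi$, defined only on $\mathrm{Re}\,\alpha\geqslant 0$, the root $\psi(\beta)$ is the only one available. That leaves one equation for two unknowns. Your closing appeal to $\delta_\pm$ and the strong Markov property does not help: decomposing at the exit from $(0,K)$ brings in the killed resolvent before exit and the laws started from $0$ and $K$, i.e.\ the very unknowns.

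\textbf{The missing idea is a support argument, which makes the upper boundary irrelevant.} Set $\widehat W(\alpha):=\int_0^\infty e^{-\alpha y}W^{(\beta)}(y)\,\mathrm{d}y$; for $\alpha>\psi(\beta)$ we have $\beta-\varphi(\alpha)=-1/\widehat W(\alpha)$, so divide by it. Since $1-e^{-\alpha u}=\alpha\int_0^u e^{-\alpha v}\,\mathrm{d}v$, $h(\alpha)$ is $\alpha$ times the transform of a nonnegative measure. Also, $\alpha\widehat W(\alpha)$ is the LST of the nonnegative measure $W^{(\beta)}(\mathrm{d}y)$, which has atom $W^{(\beta)}(0)$ at $0$. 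Hence $e^{-\alpha K}h(\alpha)\widehat W(\alpha)$ is the transform of a nonnegative measure $\nu$ carried by $[K,\infty)$. Uniqueness of Laplace transforms gives
\[
\mathbb{P}_x\bigl(V(T_\beta)\in\mathrm{d}y\bigr)=c_-\,W^{(\beta)}(\mathrm{d}y)-\beta W^{(\beta)}(y-x)\,\mathrm{d}y-\nu(\mathrm{d}y).
\]
Three observations then finish the argument:
\begin{enumerate}
\item On $[0,K)$ the $\nu$-term is absent.
\item At $\{K\}$ the left side is nonnegative, while the right side equals $-\nu(\{K\})\leqslant 0$ because $W^{(\beta)}$ is continuous on $(0,\infty)$. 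So there is no atom at $K$.
\item Total mass one of the \emph{restricted} measure reads $c_-W^{(\beta)}(K)-\bigl(Z^{(\beta)}(K-x)-1\bigr)=1$, i.e.\ $c_-=Z^{(\beta)}(K-x)/W^{(\beta)}(K)$.
\end{enumerate}
This yields (i), and (ii) follows from the atom $c_-W^{(\beta)}(0)$ at the origin. This is where total mass actually becomes informative.

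\textbf{A correction to the statement.} This argument produces $-\beta W^{(\beta)}(y-x)$, not $-W^{(\beta)}(y-x)$. The statement as printed is missing that factor $\beta$: without it the total mass is $1+(\beta-1)\int_0^{K-x}W^{(\beta)}(y)\,\mathrm{d}y$. Lemma~\ref{lemma: LST W Levy inti x} and its proof in the appendix do use $\beta W^{(\beta)}(y-x)$. Your remark ``matching after multiplying by $\beta$'' should be turned into this explicit correction.
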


Using the expressions in Lemma \ref{lemma: dist W init x}, the following lemma derives the LST \eqref{eq:lst} in terms of scale functions. The proof can be found in Appendix \ref{app: proofs lemmas}.

\begin{lemma}\label{lemma: LST W Levy inti x}
    For $\alpha \geqslant 0$ and $\beta >0$,
    \begin{equation}\label{eq: LST Levy W init x}
        \mathbb{E}_x\left(e^{-\alpha V(T_\beta)}\right) = \left(e^{-\alpha K} + \int_0^K \alpha e^{-\alpha y}\frac{W^{(\beta)}(y)}{W^{(\beta)}(K)}\,\mathrm{d}y\right)Z^{(\beta)}(K-x) - \beta e^{-\alpha x}\int_0^{K-x}e^{-\alpha y}W^{(\beta)}(y)\,\mathrm{d}y.
    \end{equation}
\end{lemma}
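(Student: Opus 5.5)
We will obtain the transform directly from Lemma \ref{lemma: dist W init x} by writing
\[
\mathbb{E}_x\bigl(e^{-\alpha V(T_\beta)}\bigr)=\mathbb{P}_x(V(T_\beta)=0)+\int_{(0,K]} e^{-\alpha y}\,\mathbb{P}_x(V(T_\beta)\in\mathrm{d}y).
\]
We then substitute the atom \eqref{eq: empty prob W init x Levy} and the density \eqref{eq: dens W init x Levy}. We should check whether the density in \eqref{eq: dens W init x Levy} also carries an atom at $K$. Since the measure is stated on $(0,K]$ with a density, we treat it as absolutely continuous there and verify the total mass at the end with $\alpha=0$. This gives two integrals to handle separately:
\[
\frac{Z^{(\beta)}(K-x)}{W^{(\beta)}(K)}\int_0^K e^{-\alpha y}\,\mathrm{d}W^{(\beta)}(y)
\qquad\text{and}\qquad
-\int_0^K e^{-\alpha y}W^{(\beta)}(y-x)\,\mathrm{d}y .
\]

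For the first integral we integrate by parts on $(0,K]$:
\[
\int_{(0,K]} e^{-\alpha y}\,\mathrm{d}W^{(\beta)}(y)=e^{-\alpha K}W^{(\beta)}(K)-W^{(\beta)}(0)+\alpha\int_0^K e^{-\alpha y}W^{(\beta)}(y)\,\mathrm{d}y .
\]
After multiplying by $Z^{(\beta)}(K-x)/W^{(\beta)}(K)$, the term $-W^{(\beta)}(0)Z^{(\beta)}(K-x)/W^{(\beta)}(K)$ cancels exactly against the atom at zero from \eqref{eq: empty prob W init x Levy}. What remains is precisely the first bracketed term of \eqref{eq: LST Levy W init x} times $Z^{(\beta)}(K-x)$.

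For the second integral we use $W^{(\beta)}(u)=0$ for $u<0$ and substitute $y=x+u$. This gives $-e^{-\alpha x}\int_0^{K-x}e^{-\alpha u}W^{(\beta)}(u)\,\mathrm{d}u$.

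Here we hit the main obstacle: the target formula carries a factor $\beta$ in front of this term, which does not come out of a direct substitution. I would therefore first re-examine the density claimed in Lemma \ref{lemma: dist W init x}. The natural check is $\alpha=0$: total mass one requires $Z^{(\beta)}(K-x)-\beta\int_0^{K-x}W^{(\beta)}=1$, which holds by the definition of $Z^{(\beta)}$ only if the subtracted density term is $\beta W^{(\beta)}(y-x)$. So I expect the density in \eqref{eq: dens W init x Levy} to be understood, following \cite{IP}, as $\beta W^{(\beta)}(y-x)$; that is, the factor $\beta$ was dropped in the statement. With this correction the second integral yields exactly $-\beta e^{-\alpha x}\int_0^{K-x}e^{-\alpha y}W^{(\beta)}(y)\,\mathrm{d}y$.

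Adding the two contributions then yields \eqref{eq: LST Levy W init x}. The remaining technical points are the justification of the integration by parts, which uses that $W^{(\beta)}$ is of bounded variation (indeed increasing) on $[0,K]$, and the behaviour at $y=0$ when $W^{(\beta)}(0)>0$ (bounded variation case). Both are handled by working with the Stieltjes measure $\mathrm{d}W^{(\beta)}$ on $(0,K]$ rather than a pointwise derivative.
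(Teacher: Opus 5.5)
Your proposal is correct and matches the paper's appendix proof: add the atom at zero to the integrated density, integrate $\int_0^K e^{-\alpha y}\,\mathrm{d}W^{(\beta)}(y)$ by parts so that the boundary term $-W^{(\beta)}(0)$ cancels the atom, and shift the remaining integral by $x$. You are also right that the subtracted density term must read $\beta W^{(\beta)}(y-x)$; your total-mass check at $\alpha=0$ confirms this, and the paper's own proof silently uses this factor $\beta$.
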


\begin{remark}
    {\em A key result for queues with spectrally-positive L\'evy input is what could be termed the `time-dependent version of the generalized Pollaczek-Khinchine formula' (see e.g.\ \cite[Theorem 4.1]{DM}). This result states that, in the setting of {\it infinite} capacity, the LST of the workload at time $T_\beta$ is given by
     \begin{equation}\label{eq: PK}\frac{\beta}{\beta-\varphi(\alpha)}\left(e^{-\alpha x} -\frac{\alpha}{\psi(\beta)}e^{-\psi(\beta) \,x}\right),\end{equation}
     with as before $x$ the initial workload level, $\varphi(\cdot)$ the Laplace exponent of $Y$, and $\psi(\cdot)$ its right inverse. Importantly, the expression \eqref{eq: LST Levy W init x} extends this classical result to the finite-capacity setting.
     
    It is readily verified that \eqref{eq: PK} is consistent with \eqref{eq: LST Levy W init x} by taking the limit $K\to\infty$ in \eqref{eq: LST Levy W init x}. Indeed, from the definition of the scale function,
    \[
    -\lim_{K\to\infty}  \beta e^{-\alpha x}\int_0^{K-x}e^{-\alpha y}W^{(\beta)}(y)\,\mathrm{d}y = \frac{\beta}{\beta-\varphi(\alpha)}e^{-\alpha x},
    \] 
    In addition,
    \begin{align*}
        \lim_{K\to\infty}  &\left(e^{-\alpha K} + \int_0^K \alpha e^{-\alpha y}\frac{W^{(\beta)}(y)}{W^{(\beta)}(K)}\,\mathrm{d}y\right)Z^{(\beta)}(K-x) \\&=\lim_{K\to\infty}  \left(
        \int_0^\infty \alpha e^{-\alpha y}\frac{W^{(\beta)}(\min(y,K))}{W^{(\beta)}(K)}\,\mathrm{d}y
        \right)Z^{(\beta)}(K-x)\\
        &=\alpha \int_0^\infty e^{-\alpha y}\,W^{(\beta)}(y)\,{\rm d}y \lim_{K\to\infty}\frac{Z^{(\beta)}(K-x) }{W^\beta(K)}=-\frac{\alpha}{\beta-\varphi(\alpha)}\lim_{K\to\infty}\frac{Z^{(\beta)}(K-x) }{W^\beta(K)}.
    \end{align*}
    By \cite[Lemma 3.3]{KUZ}, $W^{(\beta)}(y)e^{-\psi(\beta)\,y}\to 1/\varphi'(\psi(\beta))$ and $Z^{(\beta)}(y)/W^{(\beta)}(y)\to \beta/\psi(\beta),$
    so that
    \[\lim_{K\to\infty}\frac{Z^{(\beta)}(K-x) }{W^\beta(K)} =\lim_{K\to\infty}\frac{Z^{(\beta)}(K-x) }{W^\beta(K-x)}\frac{W^{(\beta)}(K-x) }{W^\beta(K)} = \frac{\beta}{\psi(\beta)} e^{-\psi(\beta)\,x}.\]
    Adding up the two contributions, we readily obtain \eqref{eq: PK}. \hfill$\Diamond$
    }
\end{remark}

\subsubsection*{Subordinator case}\label{sec: prelim sub}
For the remainder of this subsection, we assume $Y$ is a subordinator, i.e., almost surely non-decreasing. Examples include the compound Poisson process with non-negative drift and the gamma process. For details on subordinators, see \cite[\S 2.6.2]{KYP}.
Again, our goal is to characterize the distribution of $V(T_\beta)$ when starting at the boundaries $0$ or $K$.
We note the subtlety that $Y(T_{\beta})$ may now have one or more atoms, as is the case, for example, with a compound Poisson process with no drift and discrete jumps.

\medskip

The following two lemmas are straightforward and included for completeness. Note that  since $Y$ is non-decreasing, we have $\sigma(u_-)=\infty$ almost surely for any $u_-,u_+\geqslant 0$ and $\beta>0$. As a consequence, $\delta_-(u_-,u_+,\beta)\equiv 0$ and $\delta_+(u_-,u_+,\beta)=\mathbb{P}(\tau(u_+)\leqslant T_\beta)$.

\begin{lemma}\label{lemma: dist W Levy sub init x}
    {\rm (i)}~For $y\in[0,K)$ and $\beta>0$,
    \begin{equation*}
        \mathbb{P}_x\left(V(T_\beta) \leqslant y\right) = \mathbb{P}\left(Y(T_\beta) \leqslant y-x\right).
    \end{equation*}
    {\rm (ii)}~For $\beta>0$,  \begin{align*}
        \mathbb{P}_x(V(T_\beta) = K) = \mathbb{P}\left(Y(T_\beta) \geqslant K-x\right).
    \end{align*}
\end{lemma}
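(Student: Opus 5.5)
The plan is to identify the doubly-reflected process pathwise when $Y$ is a subordinator. The key observation is that the lower regulator never acts: since $Y$ is non-decreasing and $x\geqslant 0$, the path $x+Y(t)$ never goes below $0$. So $U^-\equiv 0$ solves the lower part of the Skorokhod problem. The only active constraint is the upper boundary $K$, and the candidate solution is
\[
V(t)=\min\{x+Y(t),\,K\},\qquad U^+(t)=\bigl(x+Y(t)-K\bigr)^+ .
\]

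The first step is to check that this pair solves the Skorokhod problem in \eqref{eq: regu}. Since $Y$ is non-decreasing and right-continuous, $U^+$ is non-decreasing and right-continuous, and $0\leqslant V(t)\leqslant K$ holds for all $t$. Moreover, $U^+$ increases only at times $t$ with $x+Y(t)>K$ or $x+Y(t)$ crossing $K$ (possibly by a jump), and at such times $V(t)=K$. So $U^+$ increases only when $V$ sits at the upper boundary. The regulator $U^-\equiv0$ trivially satisfies its complementarity condition.

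By uniqueness of the solution to the double-sided Skorokhod problem (as in \cite[\S 1]{AAGP}), this is the process $V$. Intuitively, once $V$ reaches $K$ it stays there, because $Y$ cannot decrease. Hence $V(t)=K$ exactly when $x+Y(t)\geqslant K$, and $V(t)=x+Y(t)$ otherwise.

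The second step evaluates this at $t=T_\beta$, using that $T_\beta$ is independent of $Y$.
\begin{itemize}
\item For (i), take $y\in[0,K)$. Then $\{V(T_\beta)\leqslant y\}=\{\min(x+Y(T_\beta),K)\leqslant y\}$. Because $y<K$, this event equals $\{x+Y(T_\beta)\leqslant y\}$, which gives the claimed identity.
\item For (ii), $\{V(T_\beta)=K\}=\{x+Y(T_\beta)\geqslant K\}$, which gives $\mathbb P(Y(T_\beta)\geqslant K-x)$.
\end{itemize}
The non-strict inequality in (ii) is consistent with possible atoms of $Y(T_\beta)$: hitting $K$ exactly already places $V$ at $K$.

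There is no real obstacle here. The only point needing care is the pathwise identification of $V$ under partial rejection: a jump overshooting $K$ is truncated, and afterwards $V$ stays at $K$. The point $y=K$ is excluded from (i) precisely because of this atom at $K$.
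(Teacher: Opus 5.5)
Your proposal is correct and is the argument the paper intends: the paper gives no written proof and calls the lemma straightforward, and its implicit reasoning is exactly that, since $Y$ is non-decreasing, $U^-\equiv 0$ and $V(t)=\min\{x+Y(t),K\}$, from which (i) and (ii) follow at $t=T_\beta$. Your check of the Skorokhod complementarity conditions, together with the appeal to uniqueness, simply makes this pathwise identification rigorous.
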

\begin{lemma}\label{lemma: LST sub init x}
    For $\alpha\geqslant0$ and $\beta >0$,
    \begin{equation*}
        \mathbb{E}_x\left(e^{-\alpha V(T_\beta)}\right) = e^{-\alpha K}\mathbb{P}\left(Y(T_\beta) \geqslant K-x\right) + e^{-\alpha x}\int_{[0,K-x)} e^{-\alpha y}\,\mathbb{P}\left(Y(T_\beta) \in \mathrm{d}y\right).
    \end{equation*}
\end{lemma}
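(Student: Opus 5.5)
The statement to prove is Lemma~\ref{lemma: LST sub init x}. The plan is to derive it directly from Lemma~\ref{lemma: dist W Levy sub init x}, splitting on whether the upper barrier has been reached by time $T_\beta$. The key structural fact is that, since $Y$ is non-decreasing and $V(0)=x\geqslant 0$, the lower regulator $U^-$ never increases. Hence in \eqref{eq: regu} we have $V(t)=\min\{x+Y(t),K\}$ for all $t$.

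To see this, note that before $\tau(K-x)$ the process $x+Y$ stays in $[0,K]$, so $V=x+Y$ there. Once $x+Y$ exceeds $K$ (or jumps over it), the workload is truncated at $K$. From then on $V$ can only be pushed upward, and it is immediately reflected back to $K$. This pathwise identity is exactly the content of Lemma~\ref{lemma: dist W Levy sub init x}, which I will use as given.

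Next I split the expectation according to the events $\{Y(T_\beta)\geqslant K-x\}$ and $\{Y(T_\beta)<K-x\}$:
\begin{equation*}
\mathbb{E}_x\bigl(e^{-\alpha V(T_\beta)}\bigr)=e^{-\alpha K}\,\mathbb{P}_x(V(T_\beta)=K)+\mathbb{E}\bigl(e^{-\alpha (x+Y(T_\beta))}\,\bs 1_{\{Y(T_\beta)<K-x\}}\bigr).
\end{equation*}
On the first event $V(T_\beta)=K$, and by part (ii) of Lemma~\ref{lemma: dist W Levy sub init x} its probability equals $\mathbb{P}(Y(T_\beta)\geqslant K-x)$. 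This gives the first term of the statement.

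On the complementary event $V(T_\beta)=x+Y(T_\beta)$. Part (i) of Lemma~\ref{lemma: dist W Levy sub init x} says that the law of $V(T_\beta)$ restricted to $[0,K)$ is that of $x+Y(T_\beta)$. So the second term equals $e^{-\alpha x}\int_{[0,K-x)}e^{-\alpha y}\,\mathbb{P}(Y(T_\beta)\in\mathrm{d}y)$, which is the second term of the statement.

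The only point needing care is atoms. $Y(T_\beta)$ may have an atom at $0$ or at $K-x$ (for instance, compound Poisson with lattice jumps and no drift), so the boundaries of the integration region matter. The atom at $K-x$ corresponds to $V(T_\beta)=K$ and must be counted exactly once. It belongs to the first term, consistent with the ``$\geqslant$'' in part (ii). The integration range is therefore the half-open set $[0,K-x)$, closed at $0$ so as to include a possible atom of $Y(T_\beta)$ at zero. With the integral interpreted as a Lebesgue--Stieltjes integral over this set, no further regularity is needed. When $x=K$ the integral is empty and the formula correctly reduces to $e^{-\alpha K}$.
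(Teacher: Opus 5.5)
Your proposal is correct and follows the same route as the paper, which gives no separate proof and treats the lemma as a direct consequence of Lemma~\ref{lemma: dist W Levy sub init x}. Like the paper, you use $V(T_\beta)=\min\{x+Y(T_\beta),K\}$ and split into the atom at $K$ (from part (ii)) and the part on $[0,K)$ (from part (i)), and you correctly assign a possible atom of $Y(T_\beta)$ at $K-x$ to the first term. One small correction: Lemma~\ref{lemma: dist W Levy sub init x} is a distributional statement, not the pathwise identity itself, but your direct justification of $V(t)=\min\{x+Y(t),K\}$ covers this.
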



\subsection{Overshoot transform}\label{sec: prelim overshoot transform}
We next consider the overshoot of the L\'evy process $Y$ over a level $u \geqslant 0$ and analyze its LST on the event it crosses $u$ before killing:
\begin{equation*}
    \eta(u, \alpha, \beta) := \mathbb{E}\left(e^{-\alpha(Y(\tau(u)) - u)}{\bs 1}_{\{\tau(u) \leqslant T_\beta\}}\right)
\end{equation*}
for $\alpha\geqslant0, \beta>0$ and $u\geqslant0$. We analyze $\eta(u,\alpha,\beta)$ by evaluating the associated Laplace transform with respect to $u$: for $\gamma\geqslant0$,
\begin{equation}
    \zeta(\alpha, \beta, \gamma) := \int_0^\infty e^{-\gamma u}\eta(u,\alpha, \beta)\,\mathrm{d}u.
\end{equation}
Let $\bar{Y}(t) := \max\{Y(s): s\in[0,t]\}$ denote the running maximum of $Y$ until time $t$. For $\alpha\geqslant0$ and $\beta>0$, let $\mathscr{Y}(\alpha, \beta)$ denote the LST of $\bar{Y}(T_\beta)$. By e.g.\ \cite[Section 16.3]{DM},
\begin{equation}\label{eq: zeta in terms of LST}
    \zeta(\alpha, \beta, \gamma) = \frac{\mathscr{Y}(\gamma,\beta)}{\gamma-\alpha}\left(\frac{1}{\mathscr{Y}(\gamma,\beta)}-\frac{1}{\mathscr{Y}(\alpha,\beta)}\right).
\end{equation}
In the following two lemmas we present closed-form expressions for $\zeta(\alpha,\beta,\gamma)$, distinguishing between subordinators and non-subordinators.
\begin{lemma}\label{lemma: eta and zeta Levy}
    Suppose $Y$ is not a subordinator. For $\alpha, \gamma \geqslant 0$, $\beta >0$ and $u\geqslant0$,
    \begin{align}\label{eq: zeta Levy nonsub}
        \zeta(\alpha, \beta, \gamma) &= \frac{1}{\beta - \varphi(\gamma)} \left( \frac{\varphi(\alpha) - \varphi(\gamma)}{\gamma - \alpha} - \frac{\varphi(\alpha) - \beta}{\psi(\beta) - \alpha} \right),\\
    \label{eq: eta Levy nonsub}
        \eta(u,\alpha,\beta)& = e^{\alpha u}\left(1-(\varphi(\alpha) - \beta)\int_0^u e^{-\alpha y}W^{(\beta)}(y)\,\mathrm{d}y\right) + \frac{\varphi(\alpha) - \beta}{\psi(\beta) - \alpha} W^{(\beta)}(u). 
    \end{align}
\end{lemma}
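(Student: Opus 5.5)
The proof has two steps. First we compute $\mathscr{Y}(\cdot,\beta)$, the LST of $\bar Y(T_\beta)$, for a spectrally-positive non-subordinator and insert it into \eqref{eq: zeta in terms of LST} to get \eqref{eq: zeta Levy nonsub}. Then we invert the transform in $\gamma$ to get \eqref{eq: eta Levy nonsub}.

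\emph{Step 1: the LST of the maximum.} For spectrally-positive $Y$ we have $-Y$ spectrally negative. The Wiener--Hopf factorization at $T_\beta$ gives the law of $\underline{Y}(T_\beta)$ as exponential with rate $\psi(\beta)$. Since $\bar Y(T_\beta)$ and $Y(T_\beta)-\bar Y(T_\beta)$ are independent and the latter is distributed as $\underline Y(T_\beta)$, we obtain
\[
\frac{\beta}{\beta-\varphi(\alpha)}=\mathscr{Y}(\alpha,\beta)\cdot \frac{\psi(\beta)}{\psi(\beta)-\alpha},
\qquad\text{hence}\qquad
\mathscr{Y}(\alpha,\beta)=\frac{\beta\,(\psi(\beta)-\alpha)}{\psi(\beta)\,(\beta-\varphi(\alpha))}.
\]
This is the generalized Pollaczek--Khinchine form, consistent with \eqref{eq: PK} at $x=0$. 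At $\alpha=\psi(\beta)$ the expression is understood by continuity.

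\emph{Step 2: the formula for $\zeta$.} Substitute into \eqref{eq: zeta in terms of LST}:
\[
\zeta=\frac{1}{\gamma-\alpha}\Big(1-\frac{(\psi(\beta)-\gamma)(\beta-\varphi(\alpha))}{(\beta-\varphi(\gamma))(\psi(\beta)-\alpha)}\Big).
\]
Write $\psi(\beta)-\gamma=(\psi(\beta)-\alpha)-(\gamma-\alpha)$ and simplify; this gives \eqref{eq: zeta Levy nonsub}. This is routine algebra.

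\emph{Step 3: inversion in $\gamma$.} We identify each term of \eqref{eq: zeta Levy nonsub} as a Laplace transform in $u$.

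The second term, $-\frac{\varphi(\alpha)-\beta}{\psi(\beta)-\alpha}\cdot\frac{1}{\beta-\varphi(\gamma)}$, is by the defining property of $W^{(\beta)}$ the transform of $\frac{\varphi(\alpha)-\beta}{\psi(\beta)-\alpha}W^{(\beta)}(u)$. This holds for $\gamma>\psi(\beta)$.

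For the first term, write $\varphi(\alpha)-\varphi(\gamma)=(\varphi(\alpha)-\beta)+(\beta-\varphi(\gamma))$. This gives
\[
\frac{1}{\gamma-\alpha}+\frac{\varphi(\alpha)-\beta}{(\gamma-\alpha)(\beta-\varphi(\gamma))}.
\]
The first summand is the transform of $e^{\alpha u}$. The second summand is a product of transforms, $\frac{1}{\gamma-\alpha}$ and $-\frac{1}{\varphi(\gamma)-\beta}$, so it is the transform of the convolution
\[
-(\varphi(\alpha)-\beta)\int_0^u e^{\alpha(u-y)}W^{(\beta)}(y)\,\mathrm{d}y .
\]
Collecting terms gives exactly \eqref{eq: eta Levy nonsub}.

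\emph{Main obstacle: justifying the inversion.} The identities hold for $\gamma$ large, namely $\gamma>\max(\alpha,\psi(\beta))$. By analyticity and uniqueness of Laplace transforms they then determine $\eta(u,\alpha,\beta)$ for almost every $u$. Both sides are right-continuous in $u$: $\eta$ by the right-continuity of the paths of $Y$ and dominated convergence, and the right-hand side because $W^{(\beta)}$ is continuous on $(0,\infty)$. So equality holds for every $u\ge 0$.

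A further check is the apparent singularity at $\alpha=\psi(\beta)$. There the combination $e^{\alpha u}(\dots)+\frac{\varphi(\alpha)-\beta}{\psi(\beta)-\alpha}W^{(\beta)}(u)$ has a removable singularity, since $\varphi(\psi(\beta))=\beta$, so the formula is interpreted by continuity in $\alpha$. One must also use that $\eta$ is bounded by $1$, so that its transform in $u$ exists for all $\gamma>0$.
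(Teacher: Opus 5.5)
Your proposal is correct and follows essentially the paper's route. The paper cites \cite[Equation (15.5)]{DM} for $\mathscr{Y}(\alpha,\beta)$ where you derive it from the Wiener--Hopf factorization; that factorization is directly valid only for $\alpha<\psi(\beta)$ and reaches all $\alpha\geqslant0$ by analytic continuation, not just by continuity at $\psi(\beta)$. Both then substitute into \eqref{eq: zeta in terms of LST}, and your term-by-term inversion is the paper's ``verify the transform'' step run forwards, while your justified right-continuity of $\eta$ is enough for uniqueness in place of the paper's asserted continuity.
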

\begin{proof}
    By \cite[Equation (15.5)]{DM},
    \begin{equation*}
        \mathscr{Y}(\alpha,\beta) = \frac{\psi(\beta) - \alpha}{\beta - \varphi(\alpha)}\frac{\beta}{\psi(\beta)},
    \end{equation*}
  Substituting this into \eqref{eq: zeta in terms of LST} and simplifying yields \eqref{eq: zeta Levy nonsub}. The validity of \eqref{eq: eta Levy nonsub} follows by verifying that its Laplace transform with respect to $u$ equals \eqref{eq: zeta Levy nonsub}. Since $\eta(u,\alpha,\beta)$ is continuous for all $u \in [0,\infty)$ in the non-subordinator case, it is uniquely determined by $\zeta(\alpha,\beta,\gamma)$.
\end{proof}

\begin{lemma}\label{lemma: eta and zeta Levy subord}
    Suppose $Y$ is a subordinator. For $\alpha, \gamma \geqslant0$, $\beta>0$ and $u\geqslant0$,
    \begin{align}\label{eq: zeta Levy sub}
        \zeta(\alpha, \beta, \gamma) &= \frac{1}{\beta - \varphi(\gamma)} \frac{\varphi(\alpha) - \varphi(\gamma)}{\gamma - \alpha},\\
    \label{eq: eta Levy sub}
        \eta(u,\alpha,\beta) &= \frac{\beta - \varphi(\alpha)}{\beta}e^{\alpha u}\int_{(u,\infty)} e^{-\alpha y}\mathbb{P}(Y(T_\beta)\in\mathrm{d}y).
    \end{align}
\end{lemma}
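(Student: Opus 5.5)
We follow the same route as for Lemma~\ref{lemma: eta and zeta Levy}: first identify $\mathscr{Y}(\alpha,\beta)$ in the subordinator case, then substitute it into \eqref{eq: zeta in terms of LST}, and finally invert the transform in $u$ by verification. The first step is immediate, because $Y$ is non-decreasing. Then $\bar Y(T_\beta)=Y(T_\beta)$, so
\[
\mathscr{Y}(\alpha,\beta)=\mathbb{E}\,e^{-\alpha Y(T_\beta)}=\int_0^\infty \beta e^{-\beta t}e^{\varphi(\alpha)t}\,\mathrm{d}t=\frac{\beta}{\beta-\varphi(\alpha)}.
\]
This is valid since $\varphi(\alpha)\leqslant 0<\beta$ for a subordinator.

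For \eqref{eq: zeta Levy sub}, substitute this into \eqref{eq: zeta in terms of LST}. This gives
\[
\frac{1}{\mathscr{Y}(\gamma,\beta)}-\frac{1}{\mathscr{Y}(\alpha,\beta)}=\frac{\varphi(\alpha)-\varphi(\gamma)}{\beta}.
\]
Multiplying by $\mathscr{Y}(\gamma,\beta)/(\gamma-\alpha)$ yields \eqref{eq: zeta Levy sub}. For $\gamma=\alpha$ the expression is understood as the continuous limit.

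For \eqref{eq: eta Levy sub}, a direct probabilistic derivation is also available and avoids inversion subtleties. Because $Y$ is non-decreasing, $\{\tau(u)\leqslant T_\beta\}$ essentially coincides with $\{Y(T_\beta)>u\}$. Nevertheless, the cleanest route is verification. We compute the Laplace transform in $u$ of the right-hand side of \eqref{eq: eta Levy sub}, and Fubini gives
\[
\int_0^\infty e^{-\gamma u}e^{\alpha u}\int_{(u,\infty)}e^{-\alpha y}\,\mathbb{P}(Y(T_\beta)\in\mathrm{d}y)\,\mathrm{d}u=\int_{[0,\infty)} e^{-\alpha y}\,\frac{1-e^{-(\gamma-\alpha)y}}{\gamma-\alpha}\,\mathbb{P}(Y(T_\beta)\in\mathrm{d}y).
\]
The right-hand side equals $\bigl(\mathscr{Y}(\alpha,\beta)-\mathscr{Y}(\gamma,\beta)\bigr)/(\gamma-\alpha)$. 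Multiplying by $(\beta-\varphi(\alpha))/\beta$ and inserting the formula for $\mathscr{Y}$ gives
\[
\frac{\beta-\varphi(\alpha)}{\gamma-\alpha}\left(\frac{1}{\beta-\varphi(\alpha)}-\frac{1}{\beta-\varphi(\gamma)}\right)=\frac{1}{\beta-\varphi(\gamma)}\,\frac{\varphi(\alpha)-\varphi(\gamma)}{\gamma-\alpha},
\]
which is \eqref{eq: zeta Levy sub}.

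The main obstacle is uniqueness. Unlike in the non-subordinator case, $\eta(\cdot,\alpha,\beta)$ need not be continuous, for instance when $Y(T_\beta)$ has atoms. Equality of Laplace transforms therefore only gives equality for Lebesgue-almost every $u$. To upgrade this to every $u\geqslant0$, we argue that both sides are right-continuous in $u$.

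The right-hand side is right-continuous because the integral over $(u,\infty)$ is right-continuous in $u$, by dominated convergence. For the left-hand side, $\tau(u)$ is right-continuous in $u$, since $\tau(u)=\inf\{t:Y(t)>u\}$ and $\{Y(t)>u\}=\bigcup_{v\downarrow u}\{Y(t)>v\}$. Moreover, $Y(\tau(v))\to Y(\tau(u))$ as $v\downarrow u$ by right-continuity of paths, and bounded convergence then gives right-continuity of $\eta$. Two right-continuous functions that agree almost everywhere agree everywhere, which completes the proof.
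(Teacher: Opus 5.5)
Your proposal is correct and follows the paper's route: you compute $\mathscr{Y}(\alpha,\beta)=\beta/(\beta-\varphi(\alpha))$, substitute it into \eqref{eq: zeta in terms of LST}, check that the $u$-transform of the candidate in \eqref{eq: eta Levy sub} reproduces \eqref{eq: zeta Levy sub}, and use right-continuity of $\eta$ in $u$ to select the version integrating over $(u,\infty)$ rather than $[u,\infty)$. Your justification of that right-continuity, via $\tau(u)=\inf_{v>u}\tau(v)$ and bounded convergence, supplies detail the paper only asserts; for the indicator to converge you also need $\mathbb{P}(\tau(u)=T_\beta)=0$, which holds because $T_\beta$ is independent of $Y$ and has a continuous law.
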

\begin{proof}
    We now have $Y(T_\beta) = \bar{Y}(T_\beta)$, so that trivially $ \mathscr{Y}(\alpha,\beta) = {\beta}/({\beta - \varphi(\alpha)})$.
    It is easy to verify that substituting this LST into \eqref{eq: zeta in terms of LST}  yields \eqref{eq: zeta Levy sub}. Transforming \eqref{eq: eta Levy sub} with respect to $u$ yields \eqref{eq: zeta Levy sub}. Here it should be noted that transforming a similar expression as in \eqref{eq: eta Levy sub}, but integrating the probability measure over $[u,\infty)$ instead, yields \eqref{eq: zeta Levy sub} as well. However, since $\tau(u)$ denotes the first time $Y$ \textit{strictly} exceeds the level $u$, $\eta(u,\alpha,\beta)$ has to be right-continuous in all $u\in[0,\infty)$ and therefore \eqref{eq: eta Levy sub} is the correct characterization of $\eta(u,\alpha,\beta)$.
\end{proof}

\section{Analysis}\label{sec: analysis}
In this section we present our main results. We express the transform $\chi_{ij}(x,\alpha,\beta)$ in terms of a number of auxiliary objects, which we subsequently identify in Section \ref{sec: aux}. 
We follow a similar line of reasoning as in \cite[Exercise 5.5]{MB}, noting that there are various additional challenges arising from the fact that in this paper we work with a general dimension $d\in{\mathbb N}$ and spectrally-positive Lévy processes, including subordinators. We also remark in passing that the analysis in \cite[Exercise 5.5]{MB} contains a minor, easily correctable flaw, which does not affect the overall approach.

\begin{remark}\label{remark: state dependent killing}
{\em  It turns out that our framework can be extended relatively easily to allow for \emph{state-dependent killing}; see, for example, \cite[\S 3.1]{MB}. In this setting, we work with a killing time $T_{\bs\beta}$, where $\bs\beta$ is now a vector, with $\beta_i>0$ for all $i\in\defD $. 
Conditional on the background process being in state $i$, $T_{\bs\beta}$ is exponentially distributed with parameter $\beta_i$.}
\hfill$\Diamond$
\end{remark}

In what follows, all matrices are of dimension $d\times d$, unless stated otherwise. E.g., we define the matrix $\bs\chi(x)$ which has entries $\chi_{ij}(x) \equiv \chi_{ij}(x,\alpha,\beta),\ i,j\in\defD$.

\subsection{Decomposition}\label{sec: decomposition}
We condition  on the first `event': this can be (i)~hitting zero, (ii)~hitting~$K$, or (iii)~the killing at time $T_{\beta}$. We therefore define two $d\times d$ matrices, $\bs{\delta}_-(u_-, u_+, \beta)$ and $\bs{\delta}_+(u_-, u_+, \beta)$, as matrix counterparts of the hitting probabilities defined at the beginning of Section~\ref{sec: prelim}. Their entries are defined, for $\beta > 0$, $u_- \geqslant0$ and $u_+\geqslant0$ with $u_-+u_+ > 0$, as
\begin{align*}
    \delta_{-,ij}(u_-, u_+, \beta) &:= \mathbb{P}_{i}\left(\sigma(u_-) \leqslant \min\{\tau(u_+), T_{\beta}\}, J(\sigma(u_-)) = j\right) \\
    \delta_{+,ij}(u_-, u_+, \beta) &:= \mathbb{P}_{i}\left(\tau(u_+) \leqslant \min\{\sigma (u_-), T_{\beta}\}, J(\tau(u_+)) = j\right),
\end{align*}
with $\sigma(u_-)$ and $\tau(u_+)$ as defined in Section \ref{sec: prelim}. In the sequel we  write $\delta_{-,ij}(x) \equiv \delta_{-,ij}(x, K-x, \beta)$ and $\delta_{+,ij}(x) \equiv \delta_{+,ij}(x, K-x, \beta)$ for $x\in[0,K]$ to shorten the notation (which is possible, as we keep $K$ and $\beta$ fixed). For now we assume that we have access to these hitting probabilities $\delta_{-,ij}(x)$ and $\delta_{+,ij}(x)$; in Section \ref{sec: aux} we point out how they can be evaluated.

In addition, for $x\in[0,K]$, we define the matrix
$\bs\delta_{\star}(x) $ 
whose entries are given by the LSTs
\begin{equation}
    \delta_{\star ,ij}(x) \equiv \delta_{\star, ij}(x,\alpha, \beta) := e^{-\alpha x}\,\mathbb{E}_i\left( e^{-\alpha Y(T_{\beta})}{\bs 1}_{\{T_{\beta} \leqslant \min\{\sigma(x), \tau(K-x)\},\,J(T_{\beta}) = j\}} \right).\label{eq: def delta star}
\end{equation}

\begin{lemma}\label{lemma: decomp chi(x)}
    For $\alpha \geqslant0, \beta>0$ and $x\in[0,K]$,
    \begin{equation}\label{eq: chi(x)}
        \bs\chi(x) = \bs\delta_{-}(x)\,\bs\chi(0) + \bs\delta_{+}(x) \,\bs\chi(K)+ \bs\delta_{\star}(x).
    \end{equation}
\end{lemma}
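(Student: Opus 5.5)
We condition on which of three events happens first: the free process leaves the interval $(-x,K-x]$ through the bottom, it leaves through the top, or the clock $T_\beta$ rings. The key observation is that, starting from $V(0)=x$, the regulators $U^-$ and $U^+$ in \eqref{eq: regu} stay identically zero up to the stopping time $\theta:=\min\{\sigma(x),\tau(K-x)\}$. Indeed, on $[0,\theta)$ we have $-x\leqslant Y(t)\leqslant K-x$, so $x+Y(t)\in[0,K]$ and no reflection is needed. Hence $V(t)=x+Y(t)$ on $[0,\theta)$. Because $Y$ is spectrally positive, it cannot jump below $-x$, so at $\sigma(x)$ it creeps downward and $V(\sigma(x))=0$. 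At $\tau(K-x)$ it either creeps over or jumps over $K-x$; in both cases the upper regulator truncates, so $V(\tau(K-x))=K$. This includes jumps at background transitions, which are also upward only.

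Next we split $\chi_{ij}(x)=\e_{x,i}(e^{-\alpha V(T_\beta)}\bs 1_{\{J(T_\beta)=j\}})$ according to the three events $\{\sigma(x)\leqslant\min\{\tau(K-x),T_\beta\}\}$, $\{\tau(K-x)\leqslant\min\{\sigma(x),T_\beta\}\}$ and $\{T_\beta<\theta\}$, up to null sets on which two of these times coincide. On the third event, $V(T_\beta)=x+Y(T_\beta)$, which gives exactly $\delta_{\star,ij}(x)$ in \eqref{eq: def delta star}.

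On the first event we apply the strong Markov property of the pair $(V,J)$ at the stopping time $\sigma(x)$. The memorylessness of $T_\beta$, which is independent of everything else, means that conditionally on $T_\beta\geqslant\sigma(x)$ the residual time $T_\beta-\sigma(x)$ is again exponential with parameter $\beta$. The process then restarts from $(V,J)=(0,J(\sigma(x)))$. Summing over $k=J(\sigma(x))$ gives $\sum_k\delta_{-,ik}(x)\chi_{kj}(0)$. The second event is handled in the same way and gives $\sum_k\delta_{+,ik}(x)\chi_{kj}(K)$. Written in matrix form, the three contributions yield \eqref{eq: chi(x)}.

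The main point needing care is the treatment of the boundary cases and ties. At $x=0$ we have $\sigma(0)=0$ a.s. when the current state's Lévy process has unbounded variation or Brownian component. When it instead has bounded variation with negative drift, $\sigma(0)$ is small but positive. Either way the statement remains consistent: in the first case $\delta_-(0)=I$ and the identity is trivial. Similarly, at $x=K$ we use the strict exceedance in $\tau$. For subordinator states we have $\sigma=\infty$, so $\delta_-$ vanishes in those rows. Ties $\sigma(x)=T_\beta$ or $\tau(K-x)=T_\beta$ have probability zero because $T_\beta$ is continuous and independent. Finally, the strong Markov property for the doubly reflected MAP follows from the pathwise Skorokhod construction, since $V$ after a stopping time is the reflection map applied to the increments of $Y$ started from the current value of $V$.
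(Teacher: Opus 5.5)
Your argument is correct and matches the paper's proof: you condition on whether hitting $0$, hitting $K$, or killing comes first, then apply the strong Markov property together with the memorylessness of $T_\beta$. You also spell out details the paper leaves implicit, namely the vanishing regulators before $\min\{\sigma(x),\tau(K-x)\}$, creeping at $\sigma(x)$, truncation at $\tau(K-x)$, and null ties.

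Two side remarks in your last paragraph are inaccurate, though neither is used. First, $\sigma(0)=0$ a.s.\ for every non-subordinator starting state, including bounded variation with negative drift; in that case it is $\tau(0)$, relevant at $x=K$, that is positive. Second, it is the \emph{columns} of $\bs\delta_-(x)$ indexed by subordinator states that vanish, not the rows, because from a subordinator state the chain can move to a non-subordinator state and then descend below $-x$.
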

\begin{proof}
   By the strong Markov property we obtain
    \begin{equation*}\label{eq: chi_ij(x)}
        \chi_{ij}(x) = \sum_{k=1}^d \delta_{-,ik}(x)\,\chi_{kj}(0) + \sum_{k=1}^d \delta_{+,ik}(x) \,\chi_{kj}(K) + \delta_{\star ,ij}(x).
    \end{equation*}
    The first term corresponds to hitting level $0$ first, the second to hitting level $K$ first, and $\delta_{\star,ij}(x)$ to killing occurring before hitting $0$ or $K$. Expressing this decomposition in matrix form yields $\eqref{eq: chi(x)}$.
    For illustration, Figure~\ref{fig: path decomposition chi(x)} depicts sample paths corresponding to the three scenarios used in the decomposition of ${\bs \chi}(x)$ in \eqref{eq: chi(x)}.
\end{proof}

\begin{figure}[h]
    \begin{subfigure}{0.32\textwidth}
        \includegraphics[scale=0.45]{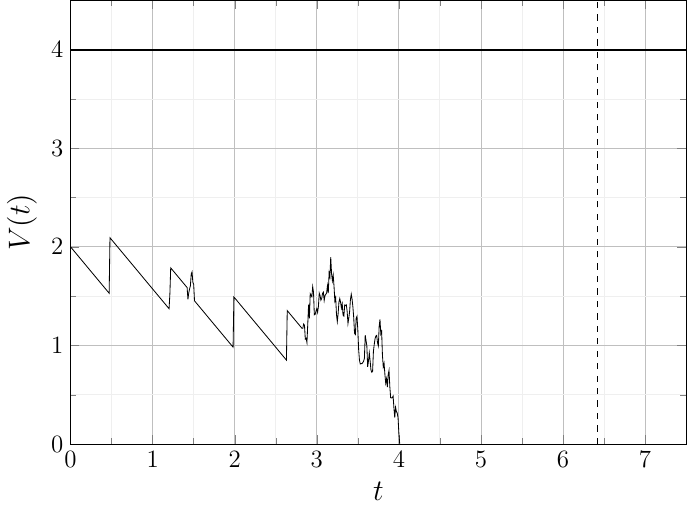}
        \caption{\label{fig: Path 1}}
    \end{subfigure}
   \begin{subfigure}{0.32\textwidth}
        \includegraphics[scale=0.45]{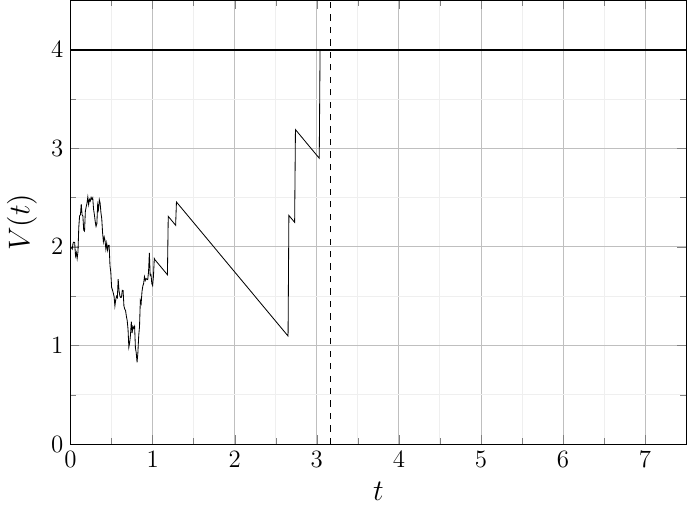}
        \caption{\label{fig: Path 2}}
    \end{subfigure}
    \begin{subfigure}{0.32\textwidth}
        \includegraphics[scale=0.45]{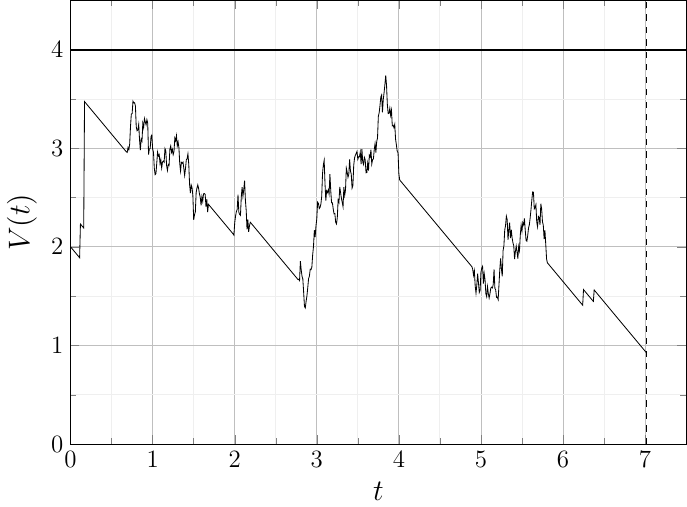}
        \caption{\label{fig: Path 3}}
    \end{subfigure}
    \caption{\label{fig: path decomposition chi(x)}The three scenarios in the decomposition in the proof of Lemma \ref{lemma: decomp chi(x)}, with $K=4$ and $x=2$: ({\sc a}) hitting $0$ first, ({\sc b})~hitting $K$ first, and ({\sc c})~killing first. The vertical dashed line corresponds to a sample of the killing time $T_\beta$.}
\end{figure}

As an aside, we note that the decomposition in \eqref{eq: chi(x)} may be trivial when $x \in \{0,K\}$, but this is not always the case. For a Markov-modulated Brownian motion and $x=0$, for example, we have that ${\bs \delta}_-(0)$ is simply the $d\times d$ identity matrix ${\bs I}$, so that \eqref{eq: chi(x)} reads ${\bs \chi}(0) = {\bs \chi}(0)$;  an analogous observation holds for $x=K$ and ${\bs \delta}_+(K)$. On the other hand, for a Markov-modulated compound Poisson process with negative drifts, while ${\bs \delta}_-(0)$ does equal ${\bs I}$, ${\bs \delta}_+(K)$ does not, and \eqref{eq: chi(x)} therefore yields a nontrivial decomposition of ${\bs \chi}(K)$. These observations follow directly from the definitions of the stopping times $\sigma(u_-)$ and $\tau(u_+)$, defined as the first times that $Y$ drops below $u_-$ and strictly exceeds $u_+$, respectively.

\subsection{Finding \texorpdfstring{${\bs \delta}_{\star}(x)$}{delta star}}\label{sec: delta_star}
Bearing in mind how we have rewritten ${\bs \chi}(x)$ in \eqref{eq: chi(x)}, our next objective is to analyze ${\bs \delta}_{\star}(x)$. 
We define the matrix ${\bs \Phi}(\alpha,\beta)$ through its entries: for $\alpha \geqslant0, \beta>0$,
\begin{align*}\label{eq: Phi_ij}
        \Phi_{ij}(\alpha, \beta) &:= \mathbb{E}_i\left(e^{-\alpha Y(T_{\beta})}{\bs 1}_{\{J(T_{\beta}) = j\}}\right),
\end{align*}
recalling that $Y$ is the free (i.e., non-reflected) process.
In addition, with the Laplace exponent $\varphi_i(\alpha):=\log\,\mathbb{E}e^{-\alpha Y_i(1)}$, we let the $(i,j)$-th entry of the matrix $\bs F(\alpha)$ be given by
\begin{equation}\label{eq: defF}
    F_{ij}(\alpha) := q_{ij}\mathscr{B}_{ij}(\alpha) + \varphi_{i}(\alpha)\,\ind{i=j},
\end{equation}
for $i,j\in\defD $, 
with ${\mathscr B}_{ii}(\alpha)= 1$ for all $\alpha\geqslant 0$ and $i\in\defD $, so that
\[\Phi(\alpha, \beta) = \beta \int_0^\infty e^{-\beta \bs{I} t}e^{\bs F(\alpha)\,t}\,{\rm d}t = \beta \left(\beta \bs{I} - \bs F(\alpha)\right)^{-1}.\]

Furthermore, observing that $Y(\tau(u)) - u$ denotes the overshoot over the level $u$, we define the matrix $\bs\eta(u,\alpha,\beta)$ via, for $\alpha\geqslant0, \beta > 0$ and $u\geqslant0$,
\begin{equation}\label{eq: eta definition}
    \eta_{ij} (u,\alpha,\beta):= \mathbb{E}_i\left(e^{-\alpha(Y(\tau(u)) - u)}\bs{1}_{\{\tau(u) \leqslant T_{\beta},\,J(\tau(u)) = j\}}\right).
\end{equation}
This quantity is the LST of the overshoot, on the event that level $u$ is exceeded before killing while the background process is in state $j$, conditional on starting in state $i$. It can be viewed as the matrix analog of the overshoot LST $\eta(u,\alpha,\beta)$ for Lévy processes studied in Section \ref{sec: prelim overshoot transform}.
For now, we assume that ${\bs \eta}(u,\alpha,\beta)$ is available; it will be uniquely characterized in Section \ref{sec: aux}. If level $u$ is exceeded by a jump occurring at the instant the background process transitions from some $k \neq j$ to $j$, we set $J(\tau(u)) = j$.

\begin{lemma}\label{lemma: delta*} For $\alpha \geqslant0, \beta>0$ and $x\in[0,K]$,
    \begin{equation}\label{eq: delta*}
        \bs\delta_\star(x) = e^{-\alpha x}\left( \bs{I} - e^{\alpha x}\bs\delta_-(x) - e^{-\alpha(K-x)}\bs\eta(K-x,\alpha,\beta) + e^{-\alpha(K-x)}\bs\delta_-(x)\,\bs\eta(K,\alpha,\beta) \right)\bs\Phi(\alpha,\beta).
    \end{equation}
\end{lemma}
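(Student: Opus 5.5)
We write $\sigma:=\sigma(x)$ and $\tau:=\tau(K-x)$. The plan is to split the unrestricted expectation $\bs\Phi(\alpha,\beta)$ according to which of $\sigma$, $\tau$, $T_\beta$ occurs first, and then apply the strong Markov property at the relevant stopping times. Memorylessness of $T_\beta$ makes the killing clock restart at each stopping time. Writing $\bs e_{\star}(x):=e^{\alpha x}\bs\delta_\star(x)$ for the unrestricted-to-killing-first piece, we have
\[
\bs\Phi(\alpha,\beta)=\bs e_\star(x)+\mathbb E\big[e^{-\alpha Y(T_\beta)};\,\sigma<\min\{\tau,T_\beta\}\big]+\mathbb E\big[e^{-\alpha Y(T_\beta)};\,\tau<\min\{\sigma,T_\beta\}\big]
\]
in matrix form. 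At $\sigma$ the process sits exactly at $-x$, since there are no downward jumps. By the strong Markov property and the additivity of $Y$, the second term therefore equals $e^{\alpha x}\bs\delta_-(x)\bs\Phi$. At $\tau$ the process sits at $K-x+\text{overshoot}$, so the third term is $\mathbb E[e^{-\alpha Y(\tau)};\tau\le\min\{\sigma,T_\beta\},J(\tau)=\cdot\,]\,\bs\Phi$.

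The remaining task is to express this killed overshoot quantity, restricted to $\{\tau<\sigma\}$, in terms of $\bs\eta$ and $\bs\delta_-$. I would decompose the unrestricted overshoot event $\{\tau(K-x)\le T_\beta\}$ according to whether $\sigma$ occurs before $\tau$. On $\{\sigma<\min\{\tau,T_\beta\}\}$ the process is at level $-x$ in state $k$ with probability $\delta_{-,ik}(x)$. From there, exceeding $K-x$ means an additional increase of $K$, and the overshoot is the same random variable. Hence
\[
e^{-\alpha(K-x)}\bs\eta(K-x,\alpha,\beta)=\mathbb E\big[e^{-\alpha Y(\tau)};\tau\le\min\{\sigma,T_\beta\}\big]+e^{-\alpha(K-x)}\bs\delta_-(x)\bs\eta(K,\alpha,\beta),
\]
where the factor $e^{-\alpha(K-x)}$ converts the overshoot LST into the LST of $Y(\tau)$. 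Solving for the restricted term and substituting into the decomposition gives
\[
\bs e_\star(x)=\big(\bs I-e^{\alpha x}\bs\delta_-(x)-e^{-\alpha(K-x)}\bs\eta(K-x)+e^{-\alpha(K-x)}\bs\delta_-(x)\bs\eta(K)\big)\bs\Phi .
\]
Multiplying by $e^{-\alpha x}$ then yields \eqref{eq: delta*}.

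The main obstacle is bookkeeping at ties and boundaries rather than any deep step. First, one must check that the events $\{\sigma=\tau\}$ and $\{\sigma=T_\beta\}$ are null or are correctly assigned to match the non-strict inequalities in the definitions of $\bs\delta_\pm$ and $\bs\delta_\star$. Simultaneous occurrence with $T_\beta$ has probability zero, and $\sigma=\tau$ is impossible when $x+(K-x)>0$. Second, the convention that $J(\tau)=j$ when the crossing jump accompanies a transition into $j$ must be respected, so that the post-$\tau$ process is started in the correct phase. Third, at $\sigma$ the level is exactly $-x$ by spectral positivity, including the degenerate cases $x=0$ and $x=K$. Finally, the additivity of $Y$ in the MAP sense is what justifies factoring $e^{-\alpha Y(T_\beta)}=e^{-\alpha Y(\sigma)}e^{-\alpha(Y(T_\beta)-Y(\sigma))}$, with the second factor distributed under $\mathbb P_{J(\sigma)}$ as $Y(T_\beta)$.
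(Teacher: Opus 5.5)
Your proof is correct and follows essentially the same route as the paper. You split $\bs\Phi(\alpha,\beta)$ according to whether $\sigma(x)$, $\tau(K-x)$ or $T_\beta$ comes first, write the $\sigma$-first part as $e^{\alpha x}\bs\delta_-(x)\bs\Phi(\alpha,\beta)$, and get the $\tau$-first part by subtracting the paths with $\sigma(x)<\tau(K-x)\leqslant T_\beta$, which contribute $e^{-\alpha(K-x)}\bs\delta_-(x)\bs\eta(K,\alpha,\beta)\bs\Phi(\alpha,\beta)$, from all paths with $\tau(K-x)\leqslant T_\beta$. The only difference is cosmetic: you factor out $\bs\Phi(\alpha,\beta)$ at $\tau(K-x)$ before doing the subtraction on the killed overshoot transform, whereas the paper subtracts directly on expectations of $e^{-\alpha Y(T_\beta)}$.
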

\begin{proof} 
    We start by decomposing $\bs\delta_\star(x)$. It is readily verified that
   \begin{equation}\label{eq: decomp delta*}
       \bs\delta_\star(x) = e^{-\alpha x}\left(\bs\Phi(\alpha,\beta) - {\bs \Xi}(x,\alpha, \beta) -{\bs \Psi}(x,\alpha, \beta)\right),
   \end{equation}
    where the matrices ${\bs \Xi}(x,\alpha,\beta)$ and ${\bs \Psi}(x,\alpha,\beta)$ have entries
\begin{align*}
    \Xi_{ij}(x,\alpha,\beta)&:= {\mathbb E}_i\left(e^{-\alpha Y(T_{\beta})}{\bs 1}_{\{\sigma(x)\leqslant\min\{\tau(K-x), T_{\beta} \},\,J(T_{\beta})=j\}}\right),\\
    \Psi_{ij}(x,\alpha,\beta)&:= {\mathbb E}_i\left(e^{-\alpha Y(T_{\beta})}{\bs 1}_{\{\tau(K-x)\leqslant\min\{\sigma(x), T_{\beta} \},\,J(T_{\beta})=j\}}\right).
\end{align*}Hence, to analyze $\bs\delta_\star(x)$, we may consider all sample paths of the free process $Y$ except those on which the level $-x$ or $K-x$ is reached first, before $T_\beta$. We proceed by analyzing (i)~${\bs \Xi}(x,\alpha,\beta)$ and (ii)~${\bs \Psi}(x,\alpha,\beta)$.

\medskip
(i)~By the strong Markov property, we find that the entries of ${\bs \Xi}(x,\alpha,\beta)$ are simply given by
\begin{align*}
    \Xi_{ij}(x,\alpha,\beta) = e^{\alpha x}\sum_{k=1}^d \delta_{-,ik}(x)\,{\Phi}_{kj}(\alpha, \beta).
\end{align*}
The reasoning for this decomposition is that in this scenario the level $-x$ is reached (with equality, due to the absence of negative jumps) before $T_{\beta}$ and $\tau(K-x)$, where the summation over $k$ takes care of the possible states of the background process at time $\sigma(x)$.

\medskip
(ii)~We further decompose ${\bs \Psi}(x,\alpha,\beta)$ by considering all paths in which the level $K-x$ is reached before $T_{\beta}$ except those in which the level $-x$ is reached before the level $K-x$. This gives
\begin{equation}\label{eq: decomp Psi}
    {\Psi}_{ij}(x,\alpha,\beta) = {\mathbb E}_i\left(e^{-\alpha Y(T_{\beta})}{\bs 1}_{\{\tau(K-x)\leqslant T_{\beta},\,J(T_{\beta})=j\}}\right) - {\mathbb E}_i\left(e^{-\alpha Y(T_{\beta})}{\bs 1}_{\{\sigma(x)\leqslant\tau(K-x)\leqslant T_{\beta},\,J(T_{\beta})=j\}}\right).
\end{equation}
The first component in the right-hand side of \eqref{eq: decomp Psi} can be rewritten as 
\begin{equation*}
    {\mathbb E}_i\left(e^{-\alpha Y(T_{\beta})}{\bs 1}_{\{\tau(K-x)\leqslant T_{\beta},\,J(T_{\beta})=j\}}\right) = e^{-\alpha(K-x)}\sum_{k=1}^d  \eta_{ik}(K-x, \alpha, \beta)\,\Phi_{kj}(\alpha,\beta),
\end{equation*}
since in this case the level $K-x$ is reached before $T_{\beta}$, with $k$ and $\eta_{ik}(K-x,\alpha,\beta)$ accounting for the background state and the overshoot at time $\tau(K-x)$, respectively.

The second component in the right-hand side of \eqref{eq: decomp Psi} can be rewritten as
\begin{equation*}
        {\mathbb E}_i\left(e^{-\alpha Y(T_{\beta})}{\bs 1}_{\{\sigma(x)\leqslant\tau(K-x)\leqslant T_{\beta},\,J(T_{\beta})=j\}}\right) = \sum_{k=1}^d\sum_{\ell=1}^d\delta_{-,ik}(x)\,e^{-\alpha(K-x)}\eta_{k\ell}(K,\alpha,\beta)\,\Phi_{\ell j}(\alpha,\beta),
    \end{equation*}
where we argue that, in this case, the paths first reach level $-x$ (with equality) and subsequently reach level $K-x$ (or, equivalently, level $K$ in a system shifted by $x$ and starting at $0$) before $T_{\beta}$.

\medskip

Inserting the found expressions for ${\bs \Xi}(x,\alpha,\beta)$ and ${\bs \Psi}(x,\alpha,\beta)$ into \eqref{eq: decomp delta*}, we obtain the following matrix equation for $\bs{\Psi}(x,\alpha,\beta)$:
\begin{equation*}
    {\bs \Psi}(x,\alpha,\beta) = e^{-\alpha(K-x)}\left(\bs{\eta}(K-x, \alpha, \beta) - \bs{\delta}_{-}(x)\,\bs{\eta}(K,\alpha,\beta)\right)\,\bs{\Phi}(\alpha,\beta)
\end{equation*}
Applying the identity \eqref{eq: decomp delta*}, we can now determine $\bs\delta_{\star}(x)$ in a straightforward manner.
\end{proof}

\subsection{Finding \texorpdfstring{${\bs \chi}(0)$}{chi 0} and \texorpdfstring{${\bs \chi}(K)$}{chi K}}\label{sec: chi(0) and chi(K)}
Returning to the identity \eqref{eq: chi(x)}, and recalling that the evaluation of ${\bs \delta}_-(x)$ and ${\bs \delta}_+(x)$ is addressed in Section \ref{sec: aux},
the last step is to identify the `constant matrices' $\bs\chi(0)$ and $\bs\chi(K)$.  Below we point out that  we can evaluate the $2d^2$ entries by solving $d$ systems of linear equations, each of them having $2d$ unknowns. 

\medskip

Let $V_i \equiv (V_i(t))_{t\geqslant 0}$ denote a workload process driven solely by the Lévy process $Y_i$. Define the vectors $\bs{q} \equiv (q_1,\dots,q_d)$ and $\bs{\omega}$, with entries $\omega_i := \beta + q_i$, and define the $d\times d$ matrices $\bs{P}^{\{0,0\}}$, $\bs{P}^{\{0,K\}}$, $\bs{P}^{\{K,0\}}$, and $\bs{P}^{\{K,K\}}$ through their entries as follows. For $i,\ell \in \defD $ and $x\in\{0,K\}$,
\begin{align*}
    P_{i\ell}^{\{x,0\}} &:= \sum_{k=1,k\neq i}^d \frac{q_{ik}}{\omega_i}\int_{[0,K]}\mathbb{P}_x\left(\min\{V_i(T_{\omega_i}) + B_{ik}, K\}\in\mathrm{d}y\right)\,\delta_{-,k\ell}(y) \\
    P_{i\ell}^{\{x,K\}} &:= \sum_{k=1,k\neq i}^d \frac{q_{ik}}{\omega_i}\int_{[0,K]}\mathbb{P}_x\left(\min\{V_i(T_{\omega_i}) + B_{ik}, K\}\in\mathrm{d}y\right)\,\delta_{+,k\ell}(y) .
\end{align*}
Also, the matrices ${\bs b}^{\{0\}}$ and ${\bs b}^{\{K\}}$ are defined through their entries: for $i,\ell \in \defD $ and $x\in\{0,K\}$,
\begin{align*}
    {b}^{\{x\}}_{i\ell} &:= \bs{1}_{\{i=\ell\}}\frac{\beta}{\omega_i}\mathbb{E}_x\left(e^{-\alpha V_i(T_{\omega_i})}\right) + \sum_{k=1, k\neq i}^d \frac{q_{ik}}{\omega_i}\int_{[0,K]}\mathbb{P}_x\left(\min\{V_i(T_{\omega_i}) + B_{ik}, K\}\in\mathrm{d}y\right)\,\delta_{\star,k\ell}(y).
\end{align*}
{The six matrices $\bs{P}^{\{x,y\}}$, and ${\bs b}^{\{x\}}$, with $x,y\in\{0,K\}$, defined above are given in terms of the distribution and LST of $V_i(T_{\omega_i})$ initialized in either of its boundaries (i.e., $0$ or $K$). Note that these were given in Section~\ref{ssec: workload dist levy}: see Lemmas \ref{lemma: dist W init x} and~\ref{lemma: dist W Levy sub init x} for the distribution functions, and Lemmas \ref{lemma: LST W Levy inti x} and \ref{lemma: LST sub init x} for the corresponding LSTs.} 
The matrices  also involve entries of ${\bs \delta}_\star(y)$ from Lemma \ref{lemma: delta*}, and ${\bs \delta}_-(y)$ and ${\bs \delta}_+(y)$, which are characterized in Section \ref{sec: hitting probabilities}. The density $\mathbb{P}_x\left(\min\{V_i(T_{\omega_i}) + B_{ik}, K\}\in\mathrm{d}y\right)$ coincides with the density of the workload immediately after the first background-state jump, conditional on this jump being from $i$ to $k$ and occurring before killing. See also Remark \ref{rem: chis} below.

\begin{lemma}\label{lemma: decomp chi(0) and chi(K)}
    For $\alpha\geqslant0$ and $\beta>0$,
    \begin{equation*}
        \left(\begin{array}{c} {\bs \chi}(0) \\ {\bs \chi}(K)\end{array} \right)= {\bs P}\left(\begin{array}{c} {\bs \chi}(0) \\ {\bs \chi}(K)\end{array} \right) + 
        \left(\begin{array}{c} {\bs b}^{\{0\}} \\ {\bs b}^{\{K\}}\end{array} \right),
    \end{equation*}
    with ${\bs P}$ the $2d\times 2d$ matrix given by
    \begin{equation*}
        {\bs P}:=\left(\begin{array}{cc} {\bs P}^{\{0,0\}}&{\bs P}^{\{0,K\}}\\{\bs P}^{\{K,0\}}&{\bs P}^{\{K,K\}}\end{array}\right).
    \end{equation*}
\end{lemma}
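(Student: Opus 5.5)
We condition on the first event after time $0$ for the process started at a boundary point $x\in\{0,K\}$ in background state $i$. Two exponential clocks compete: the killing clock $T_\beta$ and the holding time of $J$ in state $i$ (rate $q_i$). Their minimum is exponential with rate $\omega_i=\beta+q_i$, independent of everything else. The killing clock wins with probability $\beta/\omega_i$, and a transition to $k\neq i$ wins with probability $q_{ik}/\omega_i$. Up to this time the background state stays $i$, so the workload evolves as the doubly-reflected L\'evy-driven process $V_i$ started at $x$, observed at the independent time $T_{\omega_i}$.

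If killing occurs first, the contribution to $\chi_{i\ell}(x)$ is
\[
\bs 1_{\{i=\ell\}}\,\frac{\beta}{\omega_i}\,\mathbb{E}_x\bigl(e^{-\alpha V_i(T_{\omega_i})}\bigr).
\]
This uses the independence of which clock rings from the value of the minimum (memorylessness of competing exponentials). If instead a transition to $k$ occurs first, a job $B_{ik}$ arrives at that instant, independent of the past. Under partial rejection the workload jumps to $y=\min\{V_i(T_{\omega_i})+B_{ik},K\}$. By the strong Markov property of $(V,J)$ at this transition epoch, and the memorylessness of the remaining killing time, the conditional continuation equals $\chi_{k\ell}(y)$.

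We then substitute Lemma~\ref{lemma: decomp chi(x)}:
\[
\chi_{k\ell}(y)=\sum_m \delta_{-,km}(y)\,\chi_{m\ell}(0)+\sum_m\delta_{+,km}(y)\,\chi_{m\ell}(K)+\delta_{\star,k\ell}(y).
\]
Integrating against the law of $y$ and summing over $k\neq i$ with weights $q_{ik}/\omega_i$ produces exactly the terms $\sum_m P^{\{x,0\}}_{im}\chi_{m\ell}(0)+\sum_m P^{\{x,K\}}_{im}\chi_{m\ell}(K)$, together with the second part of $b^{\{x\}}_{i\ell}$. Collecting the cases $x=0$ and $x=K$ into block form gives the stated $2d\times 2d$ system.

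The points needing care are measurability and the edge cases. First, the functions $y\mapsto\delta_{\pm,k\ell}(y),\delta_{\star,k\ell}(y)$ must be measurable and bounded, so that the integrals $\int_{[0,K]}$ make sense. The integration is over the closed interval, since atoms at $0$ (e.g.\ $\mathbb P_0(V_i(T_{\omega_i})=0)>0$ when $B_{ik}=0$ with positive probability) and at $K$ (truncation) can occur. Second, Lemma~\ref{lemma: decomp chi(x)} must hold at the endpoints $y\in\{0,K\}$ themselves. This is fine because the lemma is stated for all $x\in[0,K]$, including the nontrivial cases such as $\bs\delta_+(K)\ne\bs I$ discussed after it. Third, the strong Markov step needs that the jump at the transition epoch is applied before the process restarts. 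This is consistent with the convention that $J(\tau(u))=j$ for jumps at transitions. I expect the main subtlety to be the bookkeeping at the boundaries, including subordinator states, where $\bs\delta_-\equiv 0$. It is not a genuine obstacle, since the argument only uses the law of $V_i(T_{\omega_i})$, which Lemmas~\ref{lemma: dist W init x}--\ref{lemma: LST sub init x} supply in both the subordinator and non-subordinator cases.
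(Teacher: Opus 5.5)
Your proposal is correct and follows the paper's proof essentially step for step. You condition on the first of killing and a background transition (competing exponentials with total rate $\omega_i$), identify the post-transition workload as $\min\{V_i(T_{\omega_i})+B_{ik},K\}$, apply the strong Markov property to get $\chi_{k\ell}(y)$, and substitute Lemma~\ref{lemma: decomp chi(x)} to read off $\bs P$ and $\bs b^{\{x\}}$. One small inaccuracy: in the MAP setting only the columns of $\bs\delta_-$ indexed by subordinator states vanish, while the rows indexed by $k\in\defDplus$ generally do not, so the remark that $\bs\delta_-\equiv 0$ at subordinator states is not right as stated; your argument never uses it, so the proof is unaffected.
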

\begin{proof}
    The key step in the proof is, again, conditioning on the first event, which in this case is either killing or a jump of the background process. If killing occurs first, ${\bs \chi}(0)$ and ${\bs \chi}(K)$ depend only on the LST of $V_i(T_{\omega_i})$, as captured by the matrices ${\bs b}^{\{0\}}$ and ${\bs b}^{\{K\}}$. If instead a background-state jump occurs first, we take the post-jump workload as the new initial value and use the decomposition of ${\bs \chi}(x)$ from Lemma \ref{lemma: decomp chi(x)} to express ${\bs \chi}(x)$ in terms of ${\bs \chi}(0)$ and ${\bs \chi}(K)$. A detailed proof of the resulting matrix equation is given in Appendix \ref{app: proofs lemmas}.
\end{proof}
\begin{lemma}\label{lemma: diagdom}
    For any $\beta>0$, the matrix ${\bs I}-{\bs P}$ is strictly diagonally dominant.
\end{lemma}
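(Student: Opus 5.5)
Since all entries of $\bs P$ are non-negative (they are integrals of probability measures against hitting probabilities), strict diagonal dominance of ${\bs I}-{\bs P}$ follows once we show, for every row $r$ of the $2d\times 2d$ matrix $\bs P$,
\[
\sum_{c=1}^{2d} P_{rc} < 1 .
\]
Indeed, then $|1-P_{rr}| \geqslant 1-P_{rr} > \sum_{c\neq r} P_{rc}$. Here $P_{rr}\leqslant\sum_c P_{rc}<1$, so $1-P_{rr}>0$. The plan is therefore to bound the row sums of $\bs P$.

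Fix a row, indexed by $x\in\{0,K\}$ and $i\in\defD$. Its row sum is
\[
\sum_{\ell}\big(P^{\{x,0\}}_{i\ell}+P^{\{x,K\}}_{i\ell}\big)=\sum_{k\neq i}\frac{q_{ik}}{\omega_i}\int_{[0,K]}\mathbb{P}_x\left(\min\{V_i(T_{\omega_i})+B_{ik},K\}\in\mathrm{d}y\right)\sum_{\ell}\big(\delta_{-,k\ell}(y)+\delta_{+,k\ell}(y)\big).
\]
The key pointwise bound is that for every $k$ and $y\in[0,K]$,
\[
\sum_\ell\big(\delta_{-,k\ell}(y)+\delta_{+,k\ell}(y)\big)\leqslant 1 .
\]
This holds because the left-hand side equals the probability of the union of the events $\{\sigma(y)\leqslant\min\{\tau(K-y),T_\beta\}\}$ and $\{\tau(K-y)\leqslant\min\{\sigma(y),T_\beta\}\}$. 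These events overlap only where $\sigma(y)=\tau(K-y)$, which cannot occur: one time is a downward passage (continuous, since there are no negative jumps) and the other a strict upward exceedance, while $Y(0)=0$ and $y+(K-y)=K>0$. The sum is therefore the probability of a union of disjoint events, hence at most $1$.

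Since the integrated measure is a probability measure (total mass $1$), the row sum is at most
\[
\sum_{k\neq i}\frac{q_{ik}}{\omega_i}=\frac{q_i}{\beta+q_i}<1,
\]
because $\beta>0$. This gives strict inequality uniformly, even when $q_i=0$ (the row is then zero).

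The only delicate step is the disjointness claim, especially at the boundary values $y\in\{0,K\}$. For instance, at $y=0$ one may have $\sigma(0)=0$, while $\tau(K)>0$ almost surely is not guaranteed only if $K-y=0$. I would handle this carefully: at $y=K$, the time $\tau(0)$ may be $0$ for a process of unbounded variation, but then $\sigma(K)>0$ since $K>0$. Thus both times cannot be simultaneously equal, because at a common time $t$ the path would have to sit at $-y$ and strictly above $K-y$.

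Even if disjointness were doubtful, the bound can instead be obtained directly. The sum is at most $\mathbb{P}(\min\{\sigma,\tau\}\leqslant T_\beta)\leqslant 1$ by interpreting the $\delta$'s via the first of the two exits, with ties assigned arbitrarily. So the factor $q_i/(\beta+q_i)$ carries the strictness regardless.
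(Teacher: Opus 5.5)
Your proof is correct and follows the paper's argument: you bound each row sum of $\bs P$ by $q_i/(q_i+\beta)<1$, using $\sum_\ell(\delta_{-,k\ell}(y)+\delta_{+,k\ell}(y))\leqslant 1$ and the fact that the law of the post-jump workload has total mass one, and your nonnegativity remark and disjointness argument make explicit what the paper only asserts. Two minor points: at $\tau(K-y)$ the path is at level $\geqslant K-y$, not necessarily strictly above it, but this still rules out a tie since $K>0$; and your fallback with ``ties assigned arbitrarily'' would not work under the paper's definitions, which count a tie in both $\delta_-$ and $\delta_+$, though the disjointness argument makes it unnecessary.
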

\begin{proof}
    We are to show that $(\bs{P}^{\{0,0\}}+\bs{P}^{\{0,K\}}){\bs 1}< {\bs 1}$ as well as $(\bs{P}^{\{K,0\}}+\bs{P}^{\{K,K\}}){\bs 1}< {\bs 1}$, where these inequalities are meant in the component-wise sense. We focus on the former claim, as the latter claim is proven analogously. Observe that
     \begin{align*}
        \sum_{\ell=1}^d \bigl(P^{\{0,0\}}_{i\ell}+P^{\{0,K\}}_{i\ell}\bigr)
        = \sum_{\ell=1}^d \sum_{k=1,k\neq i}^d \frac{q_{ik}}{\omega_i}
        \int_{[0,K]} \mathbb{P}_0\!\left(\min\{V_i(T_{\omega_i})+B_{ik},K\}\in{\rm  d}y\right)
        \bigl(\delta_{-,k\ell}(y)+\delta_{+,k\ell}(y)\bigr)
    \end{align*}
     for any $i\in\defD $.
    Observe that $\sum_{\ell=1}^d \delta_{-,k\ell}(y) + \delta_{+,k\ell}(y)\leqslant 1$ for any $k\in\defD$ and any $y\in[0,K]$. Hence,
    \begin{align*}
         \sum_{\ell=1}^d \bigl(P^{\{0,0\}}_{i\ell}+P^{\{0,K\}}_{i\ell} \bigr)&\leqslant  \sum_{k=1,k\neq i}^d \frac{q_{ik}}{\omega_i}\int_{[0,K]}\mathbb{P}_0\left(\min\{V_i(T_{\omega_i}) + B_{ik}, K\}\in\mathrm{d}y\right)=\frac{q_i}{q_i + \beta} < 1
     \end{align*}
    for any $\beta>0$.
    This proves that $\bs{I}-\bs{P}$ is strictly diagonally dominant. 
\end{proof}
\begin{theorem}\label{thrm: general chi(x)}
    The matrix $\bs\chi(x)$ is given by \eqref{eq: chi(x)}, where $\bs\delta_\star(x)$ follows from Lemma $\ref{lemma: delta*}$, and  \begin{equation*}
        \left(\begin{array}{c} {\bs \chi}(0) \\ {\bs \chi}(K)\end{array} \right)=( {\bs I}-{\bs P})^{-1} \left(\begin{array}{c} {\bs b}^{\{0\}} \\ {\bs b}^{\{K\}}\end{array} \right).
    \end{equation*}
\end{theorem}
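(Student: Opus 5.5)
The theorem assembles the preceding lemmas, so the proof has three parts.

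First, by Lemma~\ref{lemma: decomp chi(x)}, for every $x\in[0,K]$ we have $\bs\chi(x)=\bs\delta_-(x)\bs\chi(0)+\bs\delta_+(x)\bs\chi(K)+\bs\delta_\star(x)$. By Lemma~\ref{lemma: delta*}, $\bs\delta_\star(x)$ is expressed through $\bs\delta_-(x)$, $\bs\eta(\cdot,\alpha,\beta)$ and $\bs\Phi(\alpha,\beta)=\beta(\beta\bs I-\bs F(\alpha))^{-1}$. So it only remains to identify the $2d^2$ entries of the stacked matrix $(\bs\chi(0)^\top,\bs\chi(K)^\top)^\top$.

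Second, Lemma~\ref{lemma: decomp chi(0) and chi(K)} gives the fixed-point identity
\[
\left(\begin{array}{c}\bs\chi(0)\\ \bs\chi(K)\end{array}\right)=\bs P\left(\begin{array}{c}\bs\chi(0)\\ \bs\chi(K)\end{array}\right)+\left(\begin{array}{c}\bs b^{\{0\}}\\ \bs b^{\{K\}}\end{array}\right),
\]
which we rewrite as $(\bs I-\bs P)(\bs\chi(0)^\top,\bs\chi(K)^\top)^\top=(\bs b^{\{0\}\top},\bs b^{\{K\}\top})^\top$. Each column $j$ of this matrix equation is a separate linear system with $2d$ unknowns $\chi_{ij}(0),\chi_{ij}(K)$. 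This gives the $d$ systems mentioned before Lemma~\ref{lemma: decomp chi(0) and chi(K)}.

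Third, we need $\bs I-\bs P$ to be invertible. By Lemma~\ref{lemma: diagdom}, $\bs I-\bs P$ is strictly diagonally dominant. Invertibility then follows from the Lévy--Desplanques theorem, or directly: all entries of $\bs P$ are nonnegative and every row sum is at most $q_i/(q_i+\beta)<1$, so $\|\bs P\|_\infty<1$. Hence the Neumann series $\sum_{n\geqslant0}\bs P^n$ converges and equals $(\bs I-\bs P)^{-1}$. The fixed point is therefore unique, and multiplying by $(\bs I-\bs P)^{-1}$ gives the stated formula.

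The one point that needs care is the diagonal-dominance step. Lemma~\ref{lemma: diagdom} shows that the row sums of $\bs P$ are below one, but strict diagonal dominance of $\bs I-\bs P$ also requires handling the diagonal entries of $\bs P$, which need not vanish. The sup-norm argument above covers this: because $\bs P\geqslant0$ entrywise with row sums $<1$, we have $|1-P_{ii}|-\sum_{\ell\neq i}|P_{i\ell}|\geqslant 1-\sum_\ell P_{i\ell}>0$. Everything else is a direct substitution.
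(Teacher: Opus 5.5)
Your proposal is correct and follows the paper's proof. You take $\bs\chi(x)$ from Lemma~\ref{lemma: decomp chi(x)} and Lemma~\ref{lemma: delta*}, then solve the fixed-point system of Lemma~\ref{lemma: decomp chi(0) and chi(K)} using the invertibility of $\bs I-\bs P$ from Lemma~\ref{lemma: diagdom}. Your extra remark also holds: entrywise nonnegativity of $\bs P$ together with row sums at most $q_i/(q_i+\beta)<1$ gives both strict diagonal dominance and $\|\bs P\|_\infty<1$, and this only makes explicit what the paper's proof of Lemma~\ref{lemma: diagdom} uses implicitly.
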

 
\begin{proof}
    This theorem follows directly from Lemma \ref{lemma: decomp chi(0) and chi(K)}, with Lemma \ref{lemma: diagdom} implying the invertibility of the matrix ${\bs I}-{\bs P}$.
\end{proof}

\begin{remark}\label{rem: chis}
    {\em Above we indicated how the entries of the matrices ${\bs P}$, ${\bs b}^{\{0\}}$, and ${\bs b}^{\{K\}}$ can be determined. In the special case where $Y$ is a Markov-modulated compound Poisson process with strictly negative drifts, a more straightforward procedure can be followed (see Proposition~\ref{prop: decomp chi(0) CPP} in Section \ref{sec: extensions}).} \hfill$\Diamond$
\end{remark}

\section{Computing auxiliary objects}\label{sec: aux}
To complete the analysis, we determine several objects introduced in the previous section, namely: (i) the overshoot transform $\bs\eta(u,\alpha,\beta)$ via its transform in $u$, and (ii) the hitting probabilities $\bs\delta_{-}(x)$ and $\bs\delta_{+}(x)$ using the \emph{scale matrix} framework.  
We distinguish between background states that are subordinators and those that are not. Let $d_- \leqslant d$ denote the number of non-subordinators and set $d_+ := d-d_-$. Without loss of generality, we order the states so that the non-subordinators come first, i.e., $\defD =\defDmin \cup\defDplus$ with $\defDmin :=\{1,\ldots,d_-\}$ and $\defDplus :=\{d_-+1,\ldots,d\}$.

\subsection{Overshoot transform}\label{sec: overshoot transform} 
The objective of this subsection is to uniquely characterize, for $\alpha,u\geqslant 0$ and $\beta>0$, the overshoot transform matrix $\bs\eta(u,\alpha,\beta)$ defined in \eqref{eq: eta definition}; for $d=1$ this was already done in Section \ref{sec: prelim overshoot transform}.
We analyze $\bs\eta(u,\alpha,\beta)$ by evaluating the associated transform, for $\gamma\geqslant 0$,
\begin{equation*}
    \zeta_{ij}(\alpha,\beta,\gamma) = \int_0^\infty e^{-\gamma u} \,\eta_{ij}(u,\alpha,\beta)\,{\rm d}u.
\end{equation*}
In the following two subsections we distinguish between whether the state $i\in\defD $ is a subordinator state or not and derive $\zeta_{ij}(\alpha, \beta, \gamma)$ for both cases. 

\subsubsection*{Non-subordinator states}
We first focus on states $i\in\defDmin $ and set up a system of equations for the entries of $\bs\eta(u,\alpha,\beta)$. Following the approach of \cite[\S 1.3]{MB}, we condition on the first event (which is either a background-state transition or killing). Applying the {\it Wiener-Hopf decomposition} \cite[Chapter VI]{KYP} then yields the following `master equation':
\begin{align}\label{eq: master equation eta}
    \notag\eta_{ij}(u,\alpha,\beta) =
    \notag\:\:\:\:\: &{\bs 1}_{\{i=j\}}\mathbb{E}\left( e^{-\alpha(Y_i(\tau_i(u)) - u)}\bs{1}_{\{\tau_i(u) \leqslant T_{\omega_i}\}}\right)\,+ \\
    \notag\:\:\:\:\:\, &{\bs 1}_{\{i\neq j\}}\frac{q_{ij}}{\omega_i} \int_0^u  \mathbb{P}\left(\bar{Y}_i(T_{\omega_i})\in\mathrm{d}y\right) \int_{0}^\infty \mathbb{P}\left(\bar{Y}_i(T_{\omega_i})-Y_i(T_{\omega_i})\in\mathrm{d}z\right) \\
    \notag&\:\:\:\:\:\:\:\:\:\:\cdot\int_{u-(y-z)}^\infty \mathbb{P}\left(B_{ij}\in\mathrm{d}\nu\right)\,e^{-\alpha(y-z+\nu-u)}\,+\\
    \:\:\:\:\:& \sum_{k=1,k\neq i}^d \frac{q_{ik}}{\omega_i} \int_0^u  \mathbb{P}\left(\bar{Y}_i(T_{\omega_i})\in\mathrm{d}y\right) \int_{0}^\infty \mathbb{P}\left(\bar{Y}_i(T_{\omega_i})-Y_i(T_{\omega_i})\in\mathrm{d}z\right) \\
    \notag&\:\:\:\:\:\:\:\:\:\:\cdot\int_{0}^{u-(y-z)} \mathbb{P}\left(B_{ik}\in\mathrm{d}\nu\right)\,\eta_{kj}(u-(y-z+v), \alpha,\beta),
\end{align}
where $\bar{Y}_i$ denotes the running maximum process pertaining to $Y_i$. Note that, using that $Y_i$ is spectrally-positive, $\bar{Y}_i(T_{\omega_i})-Y_i(T_{\omega_i})$ is exponentially distributed with parameter $\psi_i(\omega_i)$, where $\psi_i(\cdot)$ denotes the right inverse of $\varphi_i(\cdot)$. Thus, the master equation above becomes
\begin{align*}
    \eta_{ij}&(u,\alpha,\beta) =\\
    &\:\:\:\:\: {\bs 1}_{\{i=j\}}\mathbb{E}\left( e^{-\alpha(Y_i(\tau_i(u)) - u)}\bs{1}_{\{\tau_i(u) \leqslant T_{\omega_i}\}}\right) \\
    &\:\:\:\:\:+ {\bs 1}_{\{i\neq j\}}\frac{q_{ij}}{\omega_i} \int_0^u  \mathbb{P}\left(\bar{Y}_i(T_{\omega_i})\in\mathrm{d}y\right) \int_{0}^\infty \psi_i(\omega_i)\,e^{-\psi_i(\omega_i) z} \int_{u-(y-z)}^\infty \mathbb{P}\left(B_{ij}\in\mathrm{d}\nu\right)\,e^{-\alpha(y-z+\nu-u)}\,{\mathrm d}z\\
    &\:\:\:\:\:+ \sum_{k=1,k\neq i}^d \frac{q_{ik}}{\omega_i} \int_0^u  \mathbb{P}\left(\bar{Y}_i(T_{\omega_i})\in\mathrm{d}y\right) \int_{0}^\infty \psi_i(\omega_i)\,e^{-\psi_i(\omega_i) z}\\
    & \:\:\:\:\:\:\:\:\:\:\cdot\int_{0}^{u-(y-z)} \mathbb{P}\left(B_{ik}\in\mathrm{d}\nu\right)\,\eta_{kj}(u-(y-z+v), \alpha,\beta)\,{\mathrm d}z.
\end{align*}
Next, we multiply the equation by $e^{-\gamma u}$ and integrate over $u\geqslant 0$. 
Recall, from the proof of Lemma \ref{lemma: eta and zeta Levy}, that the LST of the running maximum is given by, for any $\omega > 0$,
\begin{equation*}
    {\mathbb E}\, e^{-\alpha \bar Y_i(T_\omega)}
    = \frac{\psi_i(\omega) - \alpha}{\omega - \varphi_i(\alpha)}
      \frac{\omega}{\psi_i(\omega)},
\end{equation*}
where $\varphi_i(\alpha)$ denotes the Laplace exponent of the L\'evy process corresponding to background state $i\in\defD$. We use Lemma~\ref{lemma: eta and zeta Levy} to analyze the Laplace transform of the first term of the master equation. For the transform of the second and third term, we apply a number of changes of variables, and in addition we interchange the order of integration, so as to obtain
\begin{align}\label{eq: sys eqs nonsub}
    \notag\zeta_{ij}&(\alpha, \beta, \gamma) = {\bs 1}_{\{i = j\}} \frac{1}{\omega_i - \varphi_i(\gamma)} \left( \frac{\varphi_i(\alpha) - \varphi_i(\gamma)}{\gamma - \alpha} - \frac{\varphi_i(\alpha) - \omega_i}{\psi_i(\omega_i) - \alpha} \right) \\
    \notag&\:\:\:\:\:+{\bs 1}_{\{i\neq j\}} \frac{q_{ij}}{\omega_i - \varphi_i(\gamma)} \left(\frac{\mathscr{B}_{ij}(\alpha) - \mathscr{B}_{ij}(\gamma)}{\gamma - \alpha} - \frac{\mathscr{B}_{ij}(\alpha) - \mathscr{B}_{ij}(\psi_i(\omega_i))}{\psi_i(\omega_i) - \alpha}\right)\\
    &\:\:\:\:\:+\sum_{k=1,k\neq i}^d \frac{q_{ik}}{\omega_i - \varphi_i(\gamma)}\left(\mathscr{B}_{ik}(\gamma)\zeta_{kj}(\alpha,\beta,\gamma) - \mathscr{B}_{ik}(\psi_i(\omega_i)) \zeta_{kj}(\alpha,\beta,\psi_i(\omega_i))\right),
\end{align}
for any $\alpha,\gamma \geqslant0$ and $\beta>0$. {A full proof of the derivation of this Laplace transform is given in Appendix \ref{app: proofs lemmas}.

\subsubsection*{Subordinator states}
We now consider states $i\in\defDplus $, for which the right inverse $\psi_i(\cdot)$ is ill-defined and $\bar{Y}_i = Y_i$ almost surely. Hence, the master equation \eqref{eq: master equation eta} simplifies to
\begin{align*}
    \eta_{ij}(u,\alpha,\beta) =\:&  {\bs 1}_{\{i=j\}}\mathbb{E}\left( e^{-\alpha(Y_i(\tau_i(u)) - u)}\bs{1}_{\{\tau_i(u) \leqslant T_{\omega_i}\}}\right) \,+\\
    & {\bs 1}_{\{i\neq j\}}\frac{q_{ij}}{\omega_i} \int_{[0,u]}  \mathbb{P}\left(\bar{Y}_i(T_{\omega_i})\in\mathrm{d}y\right) \int_{(u-y,\infty)} \mathbb{P}\left(B_{ij}\in\mathrm{d}\nu\right)\,e^{-\alpha(y+\nu-u)}\,+\\
    &\sum_{k=1,k\neq i}^d \frac{q_{ik}}{\omega_i} \int_{[0,u]} \mathbb{P}\left(\bar{Y}_i(T_{\omega_i})\in\mathrm{d}y\right) \int_{[0,u-y]} \mathbb{P}\left(B_{ik}\in\mathrm{d}\nu\right)\,\eta_{kj}(u-(y+v), \alpha,\beta).
\end{align*}
We again take the Laplace transform with respect to $u$. {Using Lemma \ref{lemma: eta and zeta Levy subord} from Section \ref{sec: prelim overshoot transform}, the Laplace transform of the first term directly follows.} By interchanging the order of integration in the second and third terms, so that the integral with respect to $u$ becomes the innermost one, and by applying the change of variables $u \to w := u - y + \nu$, we obtain
\begin{align}
    \notag\zeta_{ij}&(\alpha, \beta, \gamma) = {\bs 1}_{\{i = j\}} \frac{1}{\omega_i - \varphi_i(\gamma)} \frac{\varphi_i(\alpha) - \varphi_i(\gamma)}{\gamma - \alpha}\,+ \\
    &\:\:\:\:\:{\bs 1}_{\{i\neq j\}} \frac{q_{ij}}{\omega_i - \varphi_i(\gamma)} \frac{\mathscr{B}_{ij}(\alpha) - \mathscr{B}_{ij}(\gamma)}{\gamma - \alpha} +\sum_{k=1,k\neq i}^d \frac{q_{ik}}{\omega_i - \varphi_i(\gamma)}\,\mathscr{B}_{ik}(\gamma)\,\zeta_{kj}(\alpha,\beta,\gamma), \label{eq: sys eqs sub}
\end{align}
where we used that in this subordinator case we have $\mathbb{E}\,e^{-\alpha \bar{Y}_i(T_{\omega})} = {\omega}/({\omega - \varphi_i(\alpha)})$ for any $\omega > 0$.

\subsubsection*{Solving the system of equations}
The next step is to rewrite the full system, \eqref{eq: sys eqs nonsub} and \eqref{eq: sys eqs sub}, in a more convenient form by moving all linear combinations of $\bs\zeta(\alpha,\beta,\gamma)$ to the left-hand side and the remaining terms to the right. This yields, for suitably defined functions $\kappa_{ij}(\alpha, \beta, \gamma)$,
\begin{equation*}
    (\varphi_i(\gamma) - \omega_i)\,\zeta_{ij}(\alpha,\beta,\gamma) + \sum_{k\neq i} q_{ik} \mathscr{B}_{ik}(\gamma)\,\zeta_{kj}(\alpha, \beta, \gamma) = \kappa_{ij}(\alpha, \beta, \gamma).
\end{equation*}
Introducing the compact notation, for $j\in\defD $,
\[{\bs \zeta}_j(\alpha,\beta,\gamma)\equiv (\zeta_{1j}(\alpha,\beta,\gamma),\ldots,\zeta_{dj}(\alpha,\beta,\gamma))^\top,\:\:\: {\bs \kappa}_j(\alpha,\beta,\gamma)\equiv (\kappa_{1j}(\alpha,\beta,\gamma),\ldots,\kappa_{dj}(\alpha,\beta,\gamma))^\top,\]
the system of equations can be written in matrix-vector form. With the matrix  $\bs F(\gamma)$ as has been defined in \eqref{eq: defF}, we thus obtain, for any $j\in\defD $,
\begin{equation}
    (\bs{F}(\gamma)-\beta\bs{I})\,{\bs \zeta}_j(\alpha,\beta,\gamma) = {\bs \kappa}_j(\alpha,\beta,\gamma).
\end{equation}
Observe that, for given $\alpha\geqslant 0$ and $\beta>0$, the quantities $\eta_{kj}(\alpha,\beta,\psi_i(\omega_i))$ are unknown constants. For that reason, we split $\kappa_{ij}(\alpha,\beta,\gamma)$ into (i)~a term that is fully expressed in terms of the model primitives, and that depends on $\alpha$ and $\gamma$, and (ii)~a term that depends on these unknown quantities and $\alpha$ and $\beta$.
More concretely, we write 
$\kappa_{ij}(\alpha,\beta,\gamma):= \check \kappa_{ij}(\alpha,\gamma)+\bar \kappa_{ij}(\alpha,\beta)$, with
\begin{equation*}
    \check \kappa_{ij}(\alpha,\gamma):= - {\bs 1}_{\{i = j\}} \frac{\varphi_i(\alpha) - \varphi_i(\gamma)}{\gamma - \alpha} -{\bs 1}_{\{i\neq j\}} q_{ij} \frac{\mathscr{B}_{ij}(\alpha) - \mathscr{B}_{ij}(\gamma)}{\gamma - \alpha}
\end{equation*}
for $i\in\defD $, and
\begin{align*}
    \bar \kappa_{ij}(\alpha,\beta) &:= {\bs 1}_{\{i = j\}}\frac{\varphi_i(\alpha) - \omega_i}{\psi_i(\omega_i) - \alpha} + {\bs 1}_{\{i \neq j\}}q_{ij}\frac{\mathscr{B}_{ij}(\alpha) - \mathscr{B}_{ij}(\psi_i(\omega_i))}{\psi_i(\omega_i) - \alpha}\\
    &\:\:\:\:\:+\sum_{k=1,k\neq i}^d q_{ik} \mathscr{B}_{ik}(\psi_i(\omega_i))\, \zeta_{kj}(\alpha,\beta,\psi_i(\omega_i)),
\end{align*}
for $i\in\defDmin $, and $\bar \kappa_{ij}(\alpha,\beta) = 0$ for $i\in\defDplus $.
In evident notation, the following result collects the above findings.
\begin{proposition}\label{prop: system zeta}
    For $\alpha,\gamma\geqslant 0$ and $\beta>0$, 
    \[ (\bs{F}(\gamma)-\beta\bs{I})\,{\bs \zeta}(\alpha,\beta,\gamma) = \bar{\bs \kappa}(\alpha,\beta)+\check{\bs \kappa}(\alpha,\gamma).\]
\end{proposition}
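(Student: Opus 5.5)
The proposition is essentially a bookkeeping statement: it collects the scalar identities \eqref{eq: sys eqs nonsub} (for $i\in\defDmin$) and \eqref{eq: sys eqs sub} (for $i\in\defDplus$) into one matrix equation. The plan is to take each of these identities for fixed $j$, multiply through by $-(\omega_i-\varphi_i(\gamma)) = \varphi_i(\gamma)-\omega_i$, and move every term containing $\zeta_{kj}(\alpha,\beta,\gamma)$ to the left-hand side. This gives, for every $i$,
\[
(\varphi_i(\gamma)-\omega_i)\,\zeta_{ij}(\alpha,\beta,\gamma)+\sum_{k\neq i}q_{ik}\mathscr{B}_{ik}(\gamma)\,\zeta_{kj}(\alpha,\beta,\gamma)=\kappa_{ij}(\alpha,\beta,\gamma).
\]
The next step is to check that the remaining right-hand side equals $\check\kappa_{ij}(\alpha,\gamma)+\bar\kappa_{ij}(\alpha,\beta)$.

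For $i\in\defDmin$, the sign change turns the terms $\frac{\varphi_i(\alpha)-\varphi_i(\gamma)}{\gamma-\alpha}$ and $q_{ij}\frac{\mathscr{B}_{ij}(\alpha)-\mathscr{B}_{ij}(\gamma)}{\gamma-\alpha}$ into exactly $\check\kappa_{ij}$. The terms involving $\psi_i(\omega_i)$ acquire a plus sign:
\begin{itemize}
\item the diagonal term $\frac{\varphi_i(\alpha)-\omega_i}{\psi_i(\omega_i)-\alpha}$,
\item the off-diagonal term $q_{ij}\frac{\mathscr{B}_{ij}(\alpha)-\mathscr{B}_{ij}(\psi_i(\omega_i))}{\psi_i(\omega_i)-\alpha}$,
\item the sum $\sum_{k\neq i}q_{ik}\mathscr{B}_{ik}(\psi_i(\omega_i))\zeta_{kj}(\alpha,\beta,\psi_i(\omega_i))$.
\end{itemize}
Together these reproduce $\bar\kappa_{ij}$; they depend on $\gamma$ only through constants evaluated at $\gamma=\psi_i(\omega_i)$. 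For $i\in\defDplus$ there are no $\psi_i$-terms, so the right-hand side is $\check\kappa_{ij}$ and $\bar\kappa_{ij}=0$, consistent with the definition.

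Next, identify the left-hand side with row $i$ of $(\bs F(\gamma)-\beta\bs I)\bs\zeta_j$. Since $\omega_i=\beta+q_i$ and $q_{ii}=-q_i$ with $\mathscr{B}_{ii}\equiv1$, the diagonal entry of $\bs F(\gamma)-\beta\bs I$ is $\varphi_i(\gamma)+q_{ii}-\beta=\varphi_i(\gamma)-\omega_i$. The off-diagonal entries are $q_{ik}\mathscr{B}_{ik}(\gamma)$ by \eqref{eq: defF}. Stacking the columns $j\in\defD$ then yields the matrix statement.

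The only substantive ingredients are the derivations of \eqref{eq: sys eqs nonsub} and \eqref{eq: sys eqs sub} themselves, which I take as given, with the former proved in the appendix. The main point of care is the sign bookkeeping when multiplying by $\varphi_i(\gamma)-\omega_i$. One should also note that the identity holds at points $\gamma$ with $\varphi_i(\gamma)=\omega_i$ or $\gamma=\alpha$ by continuity (removable singularities), after the multiplication clears the denominators.
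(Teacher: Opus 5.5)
Your proposal is correct and follows the paper's own argument. The paper likewise multiplies \eqref{eq: sys eqs nonsub} and \eqref{eq: sys eqs sub} through by $\varphi_i(\gamma)-\omega_i$ and moves the $\zeta_{kj}(\alpha,\beta,\gamma)$ terms to the left. It then identifies the left-hand side with row $i$ of $(\bs F(\gamma)-\beta\bs I)\bs\zeta_j$ using $\omega_i=\beta+q_i$, and splits the remainder into $\check\kappa_{ij}(\alpha,\gamma)+\bar\kappa_{ij}(\alpha,\beta)$, with $\bar\kappa_{ij}=0$ for $i\in\defDplus$. Your explicit sign tracking and your remark about the removable singularities at $\gamma=\alpha$ and $\gamma=\psi_i(\omega_i)$ are slightly more careful than the paper's text, but the route is the same.
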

Interestingly, the matrix $ \check {\bs{\kappa}}(\alpha,\gamma)$ satisfies a compact form, as stated in the following lemma.
\begin{lemma}\label{lemma: kappa2}
    For $\alpha,\gamma\geqslant 0$,
    \begin{equation}\label{eq: kappa2eq}
        \check {\bs{\kappa}}(\alpha,\gamma) = \frac{\bs{F}(\gamma) - \bs{F}(\alpha)}{\gamma - \alpha}.
    \end{equation}
\end{lemma}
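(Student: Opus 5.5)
This is an entrywise identity between explicit functions, so the plan is to compare the $(i,j)$ entries of both sides directly, splitting into the diagonal case $i=j$ and the off-diagonal case $i\neq j$.

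Recall from \eqref{eq: defF} that $F_{ij}(\alpha)=q_{ij}\mathscr{B}_{ij}(\alpha)+\varphi_i(\alpha)\ind{i=j}$, with the convention $\mathscr{B}_{ii}\equiv 1$. Hence
\[
F_{ij}(\gamma)-F_{ij}(\alpha)=q_{ij}\bigl(\mathscr{B}_{ij}(\gamma)-\mathscr{B}_{ij}(\alpha)\bigr)+\ind{i=j}\bigl(\varphi_i(\gamma)-\varphi_i(\alpha)\bigr).
\]

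For $i=j$, the first term vanishes because $\mathscr{B}_{ii}(\gamma)-\mathscr{B}_{ii}(\alpha)=1-1=0$. The diagonal entry of the right-hand side of \eqref{eq: kappa2eq} is therefore $(\varphi_i(\gamma)-\varphi_i(\alpha))/(\gamma-\alpha)$. Rewriting the definition of $\check\kappa_{ii}$ as $-(\varphi_i(\alpha)-\varphi_i(\gamma))/(\gamma-\alpha)$ gives exactly this expression.

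For $i\neq j$, the indicator vanishes and the entry becomes $q_{ij}(\mathscr{B}_{ij}(\gamma)-\mathscr{B}_{ij}(\alpha))/(\gamma-\alpha)$. This coincides with $-q_{ij}(\mathscr{B}_{ij}(\alpha)-\mathscr{B}_{ij}(\gamma))/(\gamma-\alpha)=\check\kappa_{ij}(\alpha,\gamma)$.

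The definition of $\check\kappa_{ij}$ is the same for $i\in\defDmin$ and $i\in\defDplus$, since only $\bar\kappa$ distinguishes subordinator states. So no case split over state types is needed.

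There is no real obstacle; the only subtlety is the point $\gamma=\alpha$. There both sides are to be read as the limiting difference quotient, namely $\varphi_i'(\alpha)$ and $q_{ij}\mathscr{B}_{ij}'(\alpha)$. The identity then holds by continuity, since it is an equality of the same difference quotients for every $\gamma\neq\alpha$.
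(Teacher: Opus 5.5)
Your proof is correct and follows the paper's route: the paper's one-line proof just says to expand the right-hand side entrywise using the definition \eqref{eq: defF} of $\bs F$. Your diagonal case uses $\mathscr{B}_{ii}\equiv 1$ to cancel the constant $q_{ii}$, and your off-diagonal case is immediate, which is exactly that verification.
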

\begin{proof}
    This is easily verified by breaking down the right-hand side of \eqref{eq: kappa2eq} using \eqref{eq: defF}.
\end{proof}
In order to uniquely characterize ${\bs \zeta}(\alpha,\beta,\gamma)$, we are left with determining, for given $\alpha\geqslant0$ and $\beta>0$, the unknown constants $\bar {\bs{\kappa}}(\alpha,\beta)$. From this point on we fix a background state $j\in\defD $. In the sequel we denote by $\bar {\bs F}(\beta, \gamma)$ the matrix $\bs{F}(\gamma) - \beta\bs{I}$, so that we have the system
\[\bar{\bs F}(\beta,\gamma) \, \bs{\zeta}_j(\alpha,\beta,\gamma)= \bs{\kappa}_j(\alpha,\beta,\gamma).\]

The procedure followed to identify ${\bs \zeta}(\alpha,\beta,\gamma)$ mimics the one
laid out in \cite[\S 3.3]{MB}. We distinguish two steps. 

\medskip 

(i)~We start by applying Cramer's rule for matrices, so as to write, for $i\in\defD $,
\[\zeta_{ij}(\alpha,\beta,\gamma) = \frac{\det \bar {\bs F}_{\bs\kappa_j, i}(\beta, \gamma)}{\det\bar {\bs F}(\beta,\gamma)},\]
where $\bar {\bs F}_{\bs\kappa_j, i}(\beta, \gamma)$ is defined as the matrix $\bar {\bs F}(\beta,\gamma)$ with its $i$-th column replaced by the vector $\bs{\kappa}_j(\alpha, \beta, \gamma)$; note that in the notation $\bar {\bs F}_{\bs\kappa_j, i}(\beta, \gamma)$ we have suppressed the dependence on $\alpha$. 

\medskip 

(ii)~The next step is to analyze the roots of $\det \bar{\bs F}(\beta,\gamma)$ for a fixed $\beta>0$. By \cite[Theorem 1]{IBM}, it follows immediately that, for each $\beta>0$, $\det \bar{\bs F}(\beta,\gamma)$ has exactly $d_-$ roots in the right half of the complex $\gamma$-plane; see also \cite[Lemma 3]{MKD}. We denote these roots by $\gamma_1(\beta),\dots,\gamma_{d_-}(\beta)$ and assume them to be simple. The case of higher multiplicities can be handled using Jordan chains, for which we refer to the detailed treatment in D’Auria et al.\ \cite{IV}.

As $\zeta_{ij}(\alpha, \beta, \gamma)$ is finite for all $\gamma\geqslant0$, any root of the denominator has to be a root of the numerator as well. This implies that we necessarily have
\begin{equation}\label{eq: kappa_j}
     \det\bar {\bs F}_{\bs\kappa_j, i}\left(\beta, \gamma_k(\beta)\right) = 0
\end{equation}
for all $i\in\defD $ and $k\in\defDmin $; recall that we fixed $j\in \defD $. 
Because $\check{\bs \kappa}_j(\alpha,\gamma)$ is a known function of the model primitives, this means that we are left with finding the $d$ entries of the vector $\bar{\bs \kappa}_j(\alpha,\beta)$, of which only the first $d_-$ are unknown.
Observe that, for our given $j\in\defD $, \eqref{eq: kappa_j} seemingly yields $d \cdot d_-$ equations (that are linear in the entries of $\bar{\bs \kappa}_j(\alpha,\beta)$). It can be seen, however, that $d \cdot d_- - d_-$ of these equations are essentially redundant: as argued in \cite[\S III.3]{MB} and \cite{MKD},
the $d \cdot d_-$ equations can be thinned to just $d_-$ equations that `contain all information'. 
This concretely implies that we can simply focus on the subsystem given by the equations 
\begin{equation}\label{eq: sys for kappa1}
    \det \bar {\bs F}_{\bs\kappa_j,1}\left(\beta, \gamma_k(\beta)\right) = 0
\end{equation} 
for $k\in\defDmin $; these are $d_-$ linear equations in $\bar{\kappa}_{ij}(\alpha,\beta)$, $i\in\defDmin $.

\medskip 

We summarize the results of this subsection in the following theorem. 

\begin{theorem}\label{thrm: eta}
    For $\alpha,\gamma\geqslant0$ and $\beta>0$, the double transform $\bs{\zeta}(\alpha,\beta,\gamma)$ is given by
    \begin{equation*}
        \bs{\zeta}(\alpha,\beta,\gamma) = (\bs{F}(\gamma)-\beta\bs{I})^{-1}\left( \bar{\bs \kappa}(\alpha,\beta)+\check{\bs \kappa}(\alpha,\gamma)\right),
    \end{equation*}
    where $\check{\bs{\kappa}}(\alpha,\gamma)$ is given by Lemma $\ref{lemma: kappa2}$ and $\bar{\bs \kappa}(\alpha,\beta)$ satisfies the system of linear equations in \eqref{eq: sys for kappa1}.
\end{theorem}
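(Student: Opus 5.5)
The plan is to assemble the statement from Proposition~\ref{prop: system zeta} and Lemma~\ref{lemma: kappa2}, and then to justify invertibility and the characterization of the unknown constants.

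\emph{Step 1: the linear system.} Proposition~\ref{prop: system zeta} already gives $(\bs F(\gamma)-\beta\bs I)\,\bs\zeta(\alpha,\beta,\gamma)=\bar{\bs\kappa}(\alpha,\beta)+\check{\bs\kappa}(\alpha,\gamma)$. It was obtained by Laplace-transforming the master equations \eqref{eq: sys eqs nonsub} and \eqref{eq: sys eqs sub}, with all $\zeta_{kj}(\cdot,\cdot,\gamma)$ terms moved to the left. I would substitute Lemma~\ref{lemma: kappa2} for $\check{\bs\kappa}$.

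\emph{Step 2: invertibility.} I would show that $\bs F(\gamma)-\beta\bs I$ is invertible for every $\gamma\geqslant 0$ outside the finite root set $\{\gamma_k(\beta)\}$. The idea is to use the identity $\bs\Phi(\gamma,\beta)=\beta(\beta\bs I-\bs F(\gamma))^{-1}$ from Section~\ref{sec: delta_star}. This is legitimate for $\gamma\geqslant0$, because the matrix $e^{\bs F(\gamma)t}$ is the matrix of transforms $\mathbb{E}_i(e^{-\gamma Y(t)}\bs 1_{\{J(t)=j\}})$, which is sub-stochastic with entries bounded by $1$. Hence every eigenvalue of $\bs F(\gamma)$ has real part at most $0<\beta$, and $\beta\bs I-\bs F(\gamma)$ is nonsingular for all real $\gamma\geqslant0$. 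In fact this holds everywhere on $[0,\infty)$: the roots $\gamma_k(\beta)$ must lie off the nonnegative real axis, or the numerators vanish there too. Either way the determinant is a nonzero analytic function, so the formula determines $\bs\zeta$ on $\gamma\geqslant0$, by continuity where needed. Multiplying by the inverse then gives the displayed expression.

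\emph{Step 3: the unknown constants.} The remaining content is that $\bar{\bs\kappa}(\alpha,\beta)$ is pinned down by \eqref{eq: sys for kappa1}. For fixed $j$, Cramer's rule gives $\zeta_{ij}=\det\bar{\bs F}_{\bs\kappa_j,i}/\det\bar{\bs F}$. Since $\zeta_{ij}$ is a Laplace transform of a bounded function ($|\eta_{ij}|\leqslant1$), it is analytic and bounded in $\mathrm{Re}\,\gamma>0$. Hence every zero $\gamma_k(\beta)$ of $\det\bar{\bs F}(\beta,\cdot)$ in the right half-plane, of which there are exactly $d_-$ by \cite[Theorem 1]{IBM}, must be a zero of each numerator. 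This yields \eqref{eq: kappa_j}.

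The main obstacle is showing that the $d_-$ equations \eqref{eq: sys for kappa1} form a nonsingular linear system in the $d_-$ unknowns $\bar\kappa_{ij}$, $i\in\defDmin$. The entries for $i\in\defDplus$ are zero, so only these $d_-$ unknowns remain. My first attempt would be the following. At a simple root $\gamma_k$, $\bar{\bs F}(\beta,\gamma_k)$ has rank $d-1$ with a left null vector $\bs h_k$. The condition $\det\bar{\bs F}_{\bs\kappa_j,1}(\beta,\gamma_k)=0$ is then equivalent, generically, to $\bs h_k^\top\bs\kappa_j(\alpha,\beta,\gamma_k)=0$. This shows that the other $i$ give the same equation, which is the redundancy argument of \cite{MKD,MB}. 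Solvability of the resulting system then amounts to linear independence of the restrictions of $\bs h_1,\dots,\bs h_{d_-}$ to the coordinates in $\defDmin$. I would cite \cite{IBM,MKD} for this under the simple-root assumption, deferring multiple roots to \cite{IV}.

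Uniqueness of $\bs\zeta$ then follows, and hence uniqueness of $\bs\eta$ via Laplace inversion and right-continuity in $u$, as in Lemmas~\ref{lemma: eta and zeta Levy} and~\ref{lemma: eta and zeta Levy subord}.
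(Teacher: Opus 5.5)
Your Steps 1 and 3 follow the paper's route: Proposition~\ref{prop: system zeta} with Lemma~\ref{lemma: kappa2}, Cramer's rule, forcing the numerators to vanish at the $d_-$ zeros of $\det(\bs F(\gamma)-\beta\bs I)$ in the right half-plane, and thinning to \eqref{eq: sys for kappa1}. Your adjugate explanation of the redundancy is correct and more explicit than the paper, which only cites \cite{MB,MKD}. At a simple zero, $\mathrm{adj}\,\bar{\bs F}\propto \bs r\bs h^\top$, so $\det\bar{\bs F}_{\bs v,i}\propto r_i\,\bs h^\top\bs v$, and ``generically'' means $r_1\neq 0$.

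The genuine error is in Step 2, which is false as soon as $d_-\geqslant 1$.
\begin{itemize}
\item For a non-subordinator state $i$, $Y_i(t)<0$ with positive probability. Hence $e^{-\gamma Y(t)}$ is not bounded by $1$, $\varphi_i(\gamma)\to\infty$ as $\gamma\to\infty$, and $e^{\bs F(\gamma)t}$ is not sub-stochastic for large $\gamma$.
\item For real $\gamma\geqslant0$, $\bs F(\gamma)$ has nonnegative off-diagonal entries, so its spectral abscissa is a real eigenvalue. It equals $0$ at $\gamma=0$, where $\bs F(0)=\bs Q$. It is at least the diagonal entry $\varphi_i(\gamma)-q_i\to\infty$. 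So it equals $\beta$ at some real $\gamma>0$, where $\bs F(\gamma)-\beta\bs I$ is singular.
\item Already for $d=1$ this zero is $\psi(\beta)$. The formula of Lemma~\ref{lemma: eta and zeta Levy} carries the factor $1/(\beta-\varphi(\gamma))$ and is finite at $\gamma=\psi(\beta)$ only because the bracket vanishes there.
\end{itemize}
So the roots you exploit in Step 3 do lie on the positive real axis, contrary to what you assert. Likewise, the identity $\bs\Phi(\gamma,\beta)=\beta(\beta\bs I-\bs F(\gamma))^{-1}$ holds only for $\gamma$ below the smallest positive zero of the determinant; at or beyond that zero, $\bs\Phi(\gamma,\beta)$ has infinite entries.

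What survives is your fallback, and it should be the actual argument. The function $\det(\bs F(\gamma)-\beta\bs I)$ is analytic in $\mathrm{Re}\,\gamma>0$. It is not identically zero, since for small $\gamma>0$ it is nonzero by continuity from $\det(\bs Q-\beta\bs I)\neq0$. Hence the displayed formula holds off its isolated zeros. At the real zeros $\gamma_k(\beta)>0$ it holds only as a removable singularity, which is exactly what the vanishing of the numerators in Step 3 provides.

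A minor point: $\zeta_{ij}$ is not bounded on $\mathrm{Re}\,\gamma>0$, only $|\zeta_{ij}|\leqslant 1/\mathrm{Re}\,\gamma$. Analyticity is all you need there.
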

 A consequence of this theorem is that, for $u\geqslant0$, $\bs{\eta}(u,\alpha,\beta)$ can be numerically obtained by applying an inverse Laplace transformation algorithm to $\bs{\zeta}(\alpha,\beta,\gamma)$ with respect to the parameter $\gamma$.

\begin{remark}\label{remark: eta Ivanovs}{\em 
   An alternative approach to deriving $\bs\eta(u,\alpha,\beta)$ in the absence of subordinators, that is, when $d=d_-$, is presented in \cite[Corollary 4]{IP}. One can verify that this approach and ours yield the same overshoot transform $\bs\eta(u,\alpha,\beta)$ by comparing their respective Laplace transforms with respect to $u$.}\hfill$\Diamond$
\end{remark}

\subsection{Hitting probabilities}\label{sec: hitting probabilities}
The remaining objective is to evaluate the hitting probability matrices ${\bs \delta}_{-}(u_-, u_+, \beta)$ and ${\bs \delta}_{+}(u_-, u_+, \beta)$, which we introduced in the beginning of Section~\ref{sec: decomposition}. For the specific case that none of the L\'evy processes are subordinators the evaluation of these hitting probability matrices is covered by \cite{IP}. In this subsection we show how those results can be extended using the results from Section~\ref{sec: overshoot transform} to cover the more general model which includes subordinators. 

The analysis relies heavily on the (\emph{primary}) \emph{scale matrix}, which extends the primary scale function for spectrally-positive L\'evy processes discussed in Section~\ref{sec: prelim} to the matrix setting. The scale matrix is the matrix-valued function $\bs{W}^{(\beta)}(y): [0,\infty) \to \mathbb{R}^{d\times d_-}$ such that, for any $\alpha\geqslant 0$ and $\beta>0$,
\begin{equation*}
    \int_{0}^\infty e^{-\alpha y} \,\bs{W}^{(\beta)}(y)\,\mathrm{d}y = \left(\bs F(\alpha) - \beta\bs{I}\right)^{-1}
    \begin{pmatrix}
        \bs{I}_- \\
        \bs{0}
    \end{pmatrix},
\end{equation*}
recalling the definition of $\bs F(\alpha)$ from \eqref{eq: defF}, where ${\bs I}_-$ denotes the $d_- \times d_-$ identity matrix and $\bs{0}$ a $d_+ \times d_-$ all-zeroes matrix. We denote the restriction of $\bs{W}^{(\beta)}(y)$ to the first $d_-$ rows by $\bs{W}_-^{(\beta)}(y)$. 
For background on the existence of $\bs{W}^{(\beta)}(y)$ and the invertibility of $\bs{W}_-^{(\beta)}(y)$, see e.g.\ \cite{IP}.

In the remainder of this subsection, we present expressions for ${\bs \delta}_{-}(u_-, u_+, \beta)$ and ${\bs \delta}_{+}(u_-, u_+, \beta)$ in terms of the primary scale matrix. We start with the case ${\bs \delta}_{-}(u_-, u_+, \beta)$, which is simpler since hitting $-u_-$ involves no undershoot in our spectrally-positive setup.

\begin{lemma}\label{lemma: delta_-}
    For $u_-, u_+ \geqslant 0$, with $u_- + u_+ > 0$, and $\beta > 0$,
    \begin{equation}
        {\bs \delta}_{-}(u_-, u_+, \beta) = \begin{pmatrix}\bs{W}^{(\beta)}(u_+)\left(\bs{W}_-^{(\beta)}(u_-+u_+)\right)^{-1} ,& \bs{0}\,\end{pmatrix},
    \end{equation}
    where $\bs{0}$ denotes a $d\times d_+$ zero matrix.
\end{lemma}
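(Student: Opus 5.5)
The plan is to reduce the statement to the known two-sided exit result for MAPs without subordinator states from \cite{IP}, after first disposing of the subordinator states. The zero block is immediate. Since $Y$ has no negative jumps and no subordinator state can produce a downward movement, the level $-u_-$ is reached continuously, i.e.\ $Y(\sigma(u_-))=-u_-$. Creeping downward requires the process to be in a state where it decreases, so $J(\sigma(u_-))\in\defDmin$ almost surely. Hence $\delta_{-,ij}=0$ for $j\in\defDplus$, and only the $d\times d_-$ block remains.

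For that block I would use the standard scale-matrix martingale argument, following \cite{IP}. The first step is the one-sided identity: for $x\geqslant 0$, the $d\times d_-$ matrix of probabilities of hitting $-x$ before killing, with the state at hitting restricted to $\defDmin$, equals $e^{-\bs G x}$ for a $d\times d_-$ matrix built from the $d_-\times d_-$ generator $\bs G$ of the downward ladder process $x\mapsto J(\sigma(x))$. The full matrix is obtained by appending the rows of the subordinator states, which are reached via first-jump decompositions. The second step characterizes $\bs W^{(\beta)}$ as
\[
\bs W^{(\beta)}(x)=\bigl(e^{-\bs G x}\text{-type term}\bigr)\,\bs L(x),
\]
that is, as the product of the one-sided hitting matrix and a matrix $\bs L(x)$ counting the expected local time at the running minimum. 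The Laplace transform of this product must equal $(\bs F(\alpha)-\beta\bs I)^{-1}\binom{\bs I_-}{\bs 0}$, and I would check this via the Wiener--Hopf factorization, exactly as in the non-subordinator case of \cite{IP}. The only new point is that subordinator rows enter the resolvent identity without contributing to the minimum.

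The third step is the exit decomposition. Start the process at level $u_+$ relative to the lower barrier, so that it sits in the strip $[0,u_-+u_+]$ shifted. By the strong Markov property at $\tau(u_+)$,
\[
\mathbb P_i(\text{hit }-u_-\text{ before killing},\,J=\cdot)=\bs\delta_-(u_-,u_+,\beta)+\bs\delta_+(u_-,u_+,\beta)\times(\text{post-overshoot hitting term}).
\]
Rather than handling the overshoot directly, I would show that $t\mapsto \bs W^{(\beta)}(Y(t)+u_-)\,e^{-\beta t}$, stopped at the exit time, is a matrix martingale. This follows from $\bs W^{(\beta)}(y)=0$ for $y<0$ together with the Laplace-transform identity, which encodes $(\mathcal A-\beta)\bs W^{(\beta)}=0$ on $(0,\infty)$ with $\mathcal A$ the MAP generator. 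Evaluating at the upper exit then requires $\bs W^{(\beta)}$ on $(u_-+u_+,\infty)$. The cleaner route is the usual one: write $\bs W^{(\beta)}(u_+)=\bs\delta_-(u_-,u_+,\beta)\,\bs W_-^{(\beta)}(u_-+u_+)$ by decomposing the local-time representation at the first passage above $u_+$. The local time at the minimum accumulated below $-u_-$ factors through the state at $\sigma(u_-)$, which lies in $\defDmin$, and this is why only $\bs W_-^{(\beta)}$ appears on the right. Right-multiplying by $(\bs W_-^{(\beta)}(u_-+u_+))^{-1}$, whose invertibility I take from \cite{IP}, gives the claim.

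I expect the main obstacle to be justifying this factorization when subordinator states are present. Specifically, one must show that the ladder/local-time representation of $\bs W^{(\beta)}$ still holds in that setting, including for rows indexed by $\defDplus$. I would handle this by first-step analysis from a subordinator state until the first transition into $\defDmin$, since time spent in subordinator states only moves the process upward. Comparing Laplace transforms, in the spirit of Section~\ref{sec: overshoot transform}, should then show that the subordinator rows of $(\bs F(\alpha)-\beta\bs I)^{-1}\binom{\bs I_-}{\bs 0}$ are exactly these first-step averages of the $\defDmin$ rows.
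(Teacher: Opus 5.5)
Your zero-column argument is exactly the paper's. Your first-step treatment of the rows in $\defDplus$ is also correct; the paper simply cites Ivanovs' thesis there. Write $\bs F_{--},\bs F_{-+},\bs F_{+-},\bs F_{++}$ for the blocks of $\bs F$. Block inversion shows that the $\defDplus$ rows of $(\bs F(\alpha)-\beta\bs I)^{-1}\binom{\bs I_-}{\bs 0}$ equal $(\beta\bs I-\bs F_{++}(\alpha))^{-1}\bs F_{+-}(\alpha)$ times its $\defDmin$ block, i.e.\ the first-entry kernel into $\defDmin$ convolved with $\bs W_-^{(\beta)}$. Since $\bs W^{(\beta)}=\bs 0$ on $(-\infty,0)$, this discards exactly the paths that exceed $u_+$ before entering $\defDmin$.

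The gap is in the step that carries the lemma: $\bs W^{(\beta)}(u_+)=\bs\delta_-(u_-,u_+,\beta)\,\bs W_-^{(\beta)}(u_-+u_+)$ for rows in $\defDmin$. The representation you base it on is false. Already for $d=1$, the hitting probability $e^{-\psi(\beta)x}$ times the expected local time at the minimum up to $\sigma(x)$ (which is at most $x$) stays bounded, whereas $W^{(\beta)}(x)\sim e^{\psi(\beta)x}/\varphi'(\psi(\beta))$. The representation of \cite{IP}, rewritten for $Y$, is $\bs W_-^{(\beta)}(x)=e^{-\bs\Lambda x}\bs L(x)$. Here $e^{\bs\Lambda x}$ is the $\defDmin\times\defDmin$ matrix of $\mathbb P_i(\sigma(x)<T_\beta,J(\sigma(x))=j)$, so its \emph{inverse} enters. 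Your symbol $e^{-\bs Gx}$ would be right only if $e^{\bs Gx}$ were the hitting matrix, but you identify $e^{-\bs Gx}$ itself as the hitting matrix. Further, $\bs L(x)$ collects expected occupation densities at the starting level (in $\defDmin$-phases) up to $\sigma(x)\wedge T_\beta$, not local time at the running minimum.

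Even with the correct representation, a single decomposition does not suffice, because of the overshoot you decided to sidestep; you need two.
\begin{enumerate}
\item Split the occupation at the starting level at $\sigma(u_+)$. This gives $\bs L(u_-+u_+)=\bs L(u_+)+e^{\bs\Lambda u_+}\bs D\,\bs L(u_-+u_+)$, where $\bs D$ is the expectation, on $\{\tau(u_+)<\min\{\sigma(u_-),T_\beta\}\}$, of the probability of creeping back down by the overshoot. This is because a level above the start can only be visited after first passing above it.
\item Split the one-sided passage at the two-sided exit. This gives $e^{\bs\Lambda u_-}=\bar{\bs\delta}_-+\bs D\,e^{\bs\Lambda(u_-+u_+)}$, with $\bar{\bs\delta}_-$ the $\defDmin\times\defDmin$ block of $\bs\delta_-(u_-,u_+,\beta)$.
\end{enumerate}
Eliminating $\bs D$ yields $\bs L(u_+)=e^{\bs\Lambda u_+}\bar{\bs\delta}_-e^{-\bs\Lambda(u_-+u_+)}\bs L(u_-+u_+)$, which is the required factorization. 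Your third step contains the second identity but not the first, so the overshoot term is never cancelled.

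Two further corrections. First, your martingale is mis-oriented. The natural candidate is $e^{-\beta t}\,\bs e_{J(t)}^\top\bs W^{(\beta)}(u_+-Y(t))$, not $\bs W^{(\beta)}(Y(t)+u_-)$. With it, the upper exit contributes nothing, since the overshoot lands where $\bs W^{(\beta)}$ vanishes. The continuous lower exit in a $\defDmin$-phase produces $\bs W_-^{(\beta)}(u_-+u_+)$, so no values beyond $u_-+u_+$ are needed. It is a good heuristic, but verifying the martingale property requires regularity of $\bs W^{(\beta)}$ that is not available in general; this is why \cite{IP} work with occupation densities.

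Second, the obstacle you flag is misplaced. The $\defDplus$ rows need no ladder representation. The real issue for rows in $\defDmin$ is that paths may pass through subordinator phases before $\sigma(u_-)$. The cleanest remedy is to censor those phases. This gives a MAP on $\defDmin$ without subordinator states and with the same exit events. Its killed exponent is the Schur complement $\bs F_{--}(\alpha)-\beta\bs I_--\bs F_{-+}(\alpha)(\bs F_{++}(\alpha)-\beta\bs I)^{-1}\bs F_{+-}(\alpha)$, whose inverse is exactly the Laplace transform of $\bs W_-^{(\beta)}$. Then \cite[Theorem 1]{IP} applies verbatim, which is what the paper's one-line citation tacitly relies on.
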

\begin{proof}
   The expression for the first $d_-$ rows and columns of $\bs\delta_-(u_-,u_+,\beta)$ follows directly from \cite[Theorem 1]{IP}; this corresponds to the cases in which the background process is in $\defDmin $ both at time 0 and at time $\sigma(u_-)$. Note that \cite{IP} considers a spectrally negative Markov additive process, with only downward jumps, whereas our setting involves upward jumps. Applying these results therefore requires a few elementary transformations. The remaining $d_+$ columns are identically zero, since the process $Y$ cannot drop below the level $-u_-$ while $J$ is in a subordinator background-state. Finally, the entries with $i\in\defDplus, j\in\defDmin$ [was: The rows with index in $\defDplus $] follow from \cite[\S 7.6]{thesis_IV}.
\end{proof}

To derive an expression for ${\bs \delta}_{+}(u_-, u_+, \beta)$, we use the matrix $\bs{P}_+(u,\beta)$ denoting the probabilities of exceeding a level $u\geqslant0$ before killing, whose entries are defined by, for $u\geqslant0, \beta>0$ and $i,j\in\defD $,
\begin{equation}
    P_{+,ij}(u,\beta) := \mathbb{P}_i\left(\tau(u) \leqslant T_{\beta}, J(\tau(u))=j\right).
\end{equation}
Observe that ${\bs P}_{+}(u,\beta)={\bs \eta}(u,0,\beta)$, which is characterized in Theorem~\ref{thrm: eta} via its Laplace transform ${\bs \zeta}(0,\beta,\gamma)$.
In the following lemma we express ${\bs \delta}_{+}(u_-, u_+, \beta)$ in terms of ${\bs P}_{+}(u,\beta)$ and ${\bs \delta}_{-}(u_-, u_+, \beta)$. 

\begin{lemma}\label{lemma: delta_+ in terms of delta_-}
    For any $u_-, u_{+} \geqslant 0$ with $u_-+u_+>0$ and $\beta>0$
    \begin{equation}\label{eq: delta_+ in terms of delta_-}
        {\bs \delta}_{+}(u_-, u_+, \beta) = {\bs P}_+(u_+, \beta) - {\bs \delta}_{-}(u_-,u_+,\beta)\,{\bs P}_{+}(u_-+u_+, \beta)
    \end{equation}
\end{lemma}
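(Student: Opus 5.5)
The plan is to decompose the event $\{\tau(u_+)\leqslant T_\beta,\,J(\tau(u_+))=j\}$ according to whether the level $-u_-$ is hit before $\tau(u_+)$, and then apply the strong Markov property at $\sigma(u_-)$. This is the same kind of first-event argument as in part~(ii) of the proof of Lemma~\ref{lemma: delta*}.

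Fix $i,j\in\defD$. I would write
\[
P_{+,ij}(u_+,\beta)=\mathbb{P}_i\bigl(\tau(u_+)\leqslant\min\{\sigma(u_-),T_\beta\},J(\tau(u_+))=j\bigr)+\mathbb{P}_i\bigl(\sigma(u_-)<\tau(u_+)\leqslant T_\beta,J(\tau(u_+))=j\bigr).
\]
The first term equals $\delta_{+,ij}(u_-,u_+,\beta)$ by definition. The two events are disjoint and together cover $\{\tau(u_+)\leqslant T_\beta\}$. I would also check the tie case $\sigma(u_-)=\tau(u_+)$. This cannot occur when $u_-+u_+>0$: at $\sigma(u_-)$ the process sits at $-u_-$, because there are no negative jumps and hence the downward passage is continuous. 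At $\tau(u_+)$, on the other hand, $Y\geqslant u_+>-u_-$.

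For the second term I would condition on $\mathcal F_{\sigma(u_-)}$. On $\{\sigma(u_-)<\tau(u_+),\,\sigma(u_-)\leqslant T_\beta\}$ we have $Y(\sigma(u_-))=-u_-$, again by the absence of downward jumps. By memorylessness, the remaining killing time is again exponential with parameter $\beta$. By the strong Markov property of the MAP $(Y,J)$ at $\sigma(u_-)$, the shifted process $Y(\sigma(u_-)+\cdot)+u_-$ started in state $k=J(\sigma(u_-))$ is a fresh copy. Moreover, the original process first strictly exceeds $u_+$ exactly when the shifted one first strictly exceeds $u_-+u_+$. Hence the second term equals
\[
\sum_{k}\delta_{-,ik}(u_-,u_+,\beta)\,P_{+,kj}(u_-+u_+,\beta).
\]
Note that $\{\sigma(u_-)<\tau(u_+)\}$ together with $\sigma(u_-)\leqslant T_\beta$ is precisely the event appearing in $\delta_{-,ik}$, given the no-tie observation. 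In matrix form this gives \eqref{eq: delta_+ in terms of delta_-}.

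The main subtlety I expect concerns boundary conventions. First, $\delta_-$ uses $\leqslant$ where I wrote $<$; the no-tie argument above reconciles the two. Second, one should check the case where the background process jumps at $\sigma(u_-)$; this is harmless because background jumps only produce upward jumps of $Y$, so $\sigma(u_-)$ is a continuous passage time. Third, there are the degenerate cases $u_-=0$ (where $\sigma(0)$ may be $0$ for non-subordinator states) and $i\in\defDplus$ (where $\delta_{-,i\cdot}$ may still be nonzero via transitions into $\defDmin$). In both cases the argument goes through verbatim, since only the strong Markov property and the continuity of downward passage were used.
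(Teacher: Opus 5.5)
Your proposal is correct and follows essentially the same route as the paper. You split $\{\tau(u_+)\leqslant T_\beta\}$ according to whether $\sigma(u_-)$ precedes $\tau(u_+)$, identify the first piece as $\bs\delta_+(u_-,u_+,\beta)$, and apply the strong Markov property at $\sigma(u_-)$, where $Y=-u_-$, to obtain $\bs\delta_-(u_-,u_+,\beta)\,\bs P_+(u_-+u_+,\beta)$. The paper phrases the same computation via the representation $\mathbb{E}_i\bigl(e^{-\beta\tau(u_+)}\cdots\bigr)$ rather than via memorylessness of $T_\beta$, and your explicit no-tie check reconciling $<$ with $\leqslant$ is a detail the paper leaves implicit.
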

\begin{proof}
    We start by noting that we can alternatively write $ \delta_{-,ij}(u_-, u_+, \beta)$ and $ \delta_{+,ij}(u_-, u_+, \beta)$ as
\begin{align*}
    \delta_{-,ij}(u_-, u_+, \beta) &= \mathbb{E}_i\left(e^{-\beta \sigma(u_-)}\bs{1}_{\{\sigma(u_-) \leqslant \tau(u_+),\, J(\sigma(u_-))=j\}}\right) \\
    \delta_{+,ij}(u_-, u_+, \beta) &= \mathbb{E}_i\left(e^{-\beta \tau(u_+)}\bs{1}_{\{\tau(u_+) \leqslant \sigma(u_-),\, J(\tau(u_+))=j\}}\right);
\end{align*}
cf.\ \cite[Lemma 5.2]{MB}.
As a consequence, by some elementary rearranging, we observe that the entries of $\bs\delta_{+}(u_-, u_+, \beta)$ obey
\begin{align*}
    \delta_{+,ij}(u_-, u_+, \beta) &= \mathbb{E}_i\left(e^{-\beta \tau(u_+)}\bs{1}_{\{\tau(u_+) <\infty,\, J(\tau(u_+))=j\}}\right) - \mathbb{E}_i\left(e^{-\beta \tau(u_+)}\bs{1}_{\{\sigma(u_-)<\tau(u_+) ,\, J(\tau(u_+))=j\}}\right),
\end{align*}
where the first term on the right-hand side is simply $P_{+,ij}(u_+, \beta)$.
We continue by analyzing the second term on the right side. Considering the individual entries of the matrix, we obtain, as a consequence of the strong Markov property,
\begin{align*}
    \mathbb{E}_i&\left[e^{-\beta \tau(u_+)}\bs{1}_{\{\sigma(u_-)<\tau(u_+) ,\, J(\tau(u_+)) = j\}}\right]= \mathbb{P}_i(\sigma(u_-) < \tau(u_+) \leqslant T_{\beta}, J(\tau(u_+)) = j) \\
    &= \sum_{k=1}^d \mathbb{P}_i(\sigma(u_-)\leqslant \min\{\tau(u_+), T_{\beta}\},\,J(\sigma(u_-)) = k)\, \mathbb{P}_k(\tau(u_- + u_+)\leqslant T_{\beta}, J(\tau(u_- + u_+)) = j).
\end{align*}
In matrix-vector form this identity can be compactly written as 
\begin{equation*}
    \mathbb{E}\left(e^{-\beta \tau(u_+)}\bs{1}_{\{\tau(u_+) > \sigma(u_-),\, J(\tau(u_+))\}}\right) = {\bs \delta}_{-}(u_-,u_+,\beta)\,{\bs P}_{+}(u_-+u_+, \beta).
\end{equation*}
Upon combining the above we find the claimed statement. 
\end{proof}
\begin{remark}\label{remark: hitting probs non sub}{\em
    In the absence of subordinators, i.e., $\defDmin=\defD$, the evaluation of the hitting probability matrices for spectrally-negative MAPs was already covered by \cite{IP}. In this case the primary scale matrix ${\bs W}^{(\beta)}(y)$ is a square $d\times d$ matrix and ${\bs \delta}_-(u_-,u_+,\beta)$ does not have any zero columns as the level $-u_-$ can be crossed in any state. In addition the \textit{secondary scale matrix} is often considered as well, defined by
    \begin{equation*}
        \bs{Z}^{(\beta)}(u) := \bs I - \int_{0}^u \bs{W}^{(\beta)}(y)\,\mathrm{d}y\left(Q - \beta\bs{I}\right).
    \end{equation*}
    It can be verified, by tedious calculations, that in this case ${\bs P}_+(u, \beta)$ can be expressed in terms of both scale matrices as 
    \begin{equation}\label{eq: P_+}
        \bs{P}_{+}(u,\beta) = \bs{Z}^{(\beta)}(u) + \bs{W}^{(\beta)}(u)\,\bar{\bs\kappa}(0,\beta)
    \end{equation}
    by taking the Laplace transforms (with respect to $u$) of both sides and show that they are the same using Theorem \ref{thrm: eta} for $\alpha=0$.
    
    Substituting \eqref{eq: P_+} into \eqref{eq: delta_+ in terms of delta_-} then yields for ${\bs \delta}_+(u_-, u_+, \beta)$
    \begin{align*}
        {\bs \delta}_{+}(u_-, u_+, \beta) &=  \bs{Z}^{(\beta)}(u_+) + \bs{W}^{(\beta)}(u_+)\,\bar{\bs\kappa}(0,\beta)\:
        -\\&\hspace{1.3cm}\bs{W}^{(\beta)}(u_+)\left(\bs{W}^{(\beta)}(u_-+u_+)\right)^{-1}\left(\bs{Z}^{(\beta)}(u_-+u_+) + \bs{W}^{(\beta)}(u_-+u_+)\,\bar{\bs\kappa}(0,\beta)\right)
        \\
        &= \bs{Z}^{(\beta)}(u_+) - \bs{W}^{(\beta)}(u_+)\left(\bs{W}^{(\beta)}(u_- + u_+)\right)^{-1}\bs{Z}^{(\beta)}(u_- + u_+).
    \end{align*}
    This result is consistent with \cite[Corollary~3]{IP}, after incorporating killing and setting $\alpha=0$ in the formulation given there. Interestingly, the above derivation reveals that in order to derive an expression for $\bs{\delta}_+(u_-, u_+, \beta)$ we actually do not need to know the precise form of the matrix $\bar{\bs\kappa}(0,\beta)$ in  \eqref{eq: P_+}, in that the only crucial property is that we have  \eqref{eq: P_+} for some matrix $\bar{\bs\kappa}(0,\beta)$ which does not depend on the level $u$. \hfill$\Diamond$
    }
\end{remark}

\section{Idle time and lost work}\label{sec: extensions}
When $Y$ is a Markov-modulated compound Poisson process with strictly negative drifts, the workload experiences idle periods during which the system is empty, and any arriving work that exceeds the available capacity $K$ is lost. In this section, we analyze the idle time and the amount of lost work in a finite-capacity queue driven by Markov-modulated compound Poisson input. Throughout, $Y_i$ denotes a compound Poisson process with arrival rate $\lambda_i$, non-negative jump sizes distributed as a generic random variable $B_i$, and an added negative drift $r_i$.

Let $I\equiv (I(t))_{t\geqslant0}$ denote the total idle time up to $t$, and $L\equiv (L(t))_{t\geqslant0}$ the cumulative lost work due to overshoots; both processes are inherently non-negative and non-decreasing.  
In this section, we study the joint distribution of $I(T_\beta)$ and $L(T_\beta)$ along with the quantities considered previously. Specifically, for all $i,j\in\defD $ and $x\in[0,K]$, our object of interest is the LST
\begin{equation}
    \tilde\chi_{ij}(x) \equiv \tilde\chi_{i,j}(x,\bs\alpha,\beta) := \e_{x,i}\left( e^{-\alpha_1 V(T_\beta) - \alpha_2 I(T_\beta) - \alpha_3 L(T_\beta)} \bs{1}_{\{J(T_\beta)=j\}} \right),
\end{equation}
for $\bs\alpha=(\alpha_1,\alpha_2,\alpha_3)$, $\alpha_k\geqslant0$ for $k=1,2,3$, and $\beta > 0;$ the subscripts in the rightmost expression indicate, as before, that the initial condition is $V(0)=x$ and $J(0)=i$. 
In our analysis of this object we make use of the following overshoot LST: for any $u\in[0,K]$,
\begin{equation}
    \tilde{\eta}_{ij}(u) \equiv \tilde{\eta}_{ij}(u, \alpha, \beta) := {\mathbb E}_{i}\left(e^{-\alpha \left(Y(\tau(u)) - u\right)}\,\bs{1}_{\{\tau(u) \leqslant \min\{\sigma(K-u), T_{\beta}\}, J(\tau(u)) = j\}}\right).
\end{equation}
It is noted that $\tilde{\bs \eta}(u,0,\beta) = {\bs \delta}_+(K-u,u,\beta)$.  
In addition, $\tilde{\bs \eta}(u)$ is closely related to the overshoot LST $\bs{\eta}(x)$ defined in \eqref{eq: eta definition} and characterized in Section~\ref{sec: overshoot transform}. In particular, the following lemma shows that $\tilde{\bs \eta}(u)$ can be expressed in terms of $\bs{\eta}(\cdot)$, similar to what was done in Lemma~\ref{lemma: delta_+ in terms of delta_-} for ${\bs \delta}_{+}(u_-, u_+, \beta)$ in terms of ${\bs P}_+(\cdot, \beta)$.

\begin{lemma}\label{lemma: eta tilde}
    For $u\in[0,K], \alpha \geqslant 0$ and $\beta>0$,
    \begin{equation}\label{eq: tilde eta}
        \tilde{\bs \eta}(u) = \bs{\eta}(u) - \bs{\delta}_{-}(K-u)\,\bs{\eta}(K).
    \end{equation}
\end{lemma}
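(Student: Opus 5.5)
We mimic the proof of Lemma~\ref{lemma: delta_+ in terms of delta_-}, now keeping track of the overshoot factor $e^{-\alpha(Y(\tau(u))-u)}$ in addition to the killing. Fix $i,j\in\defD$ and $u\in[0,K]$. The event $\{\tau(u)\leqslant T_\beta\}$ splits into the disjoint events $\{\tau(u)\leqslant\min\{\sigma(K-u),T_\beta\}\}$ and $\{\sigma(K-u)<\tau(u)\leqslant T_\beta\}$. The ties $\sigma(K-u)=\tau(u)$ are impossible, because at $\sigma$ the process sits exactly at $-(K-u)<u$, barring the degenerate case $u=K$, $K-u=0$, which is handled by the same conventions as in the definitions. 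We therefore write
\begin{equation*}
\eta_{ij}(u)=\tilde\eta_{ij}(u)+\mathbb{E}_i\left(e^{-\alpha(Y(\tau(u))-u)}\bs 1_{\{\sigma(K-u)<\tau(u)\leqslant T_\beta,\,J(\tau(u))=j\}}\right).
\end{equation*}
It then remains to identify the second term as $(\bs\delta_-(K-u)\bs\eta(K))_{ij}$.

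For the second term, we apply the strong Markov property at the stopping time $\sigma(K-u)$ on the event $\{\sigma(K-u)\leqslant\min\{\tau(u),T_\beta\}\}$. By memorylessness of $T_\beta$, the residual killing time is again exponential with parameter $\beta$ and independent of the future. Since $Y$ has no negative jumps, $Y(\sigma(K-u))=-(K-u)$ exactly. Consequently, for the post-$\sigma$ process $\hat Y(s):=Y(\sigma(K-u)+s)+(K-u)$, which starts at $0$ in state $k=J(\sigma(K-u))$, we have the following: the original process exceeds $u$ precisely when $\hat Y$ exceeds $K$, and
\begin{equation*}
Y(\tau(u))-u=\hat Y(\hat\tau(K))-K .
\end{equation*}
Hence the overshoot over $u$ equals the overshoot of the shifted process over $K$. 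Conditioning on $k$ gives
\begin{equation*}
\sum_{k}\delta_{-,ik}(K-u,u,\beta)\,\eta_{kj}(K,\alpha,\beta),
\end{equation*}
where $\delta_{-,ik}(K-u,u,\beta)=\delta_{-,ik}(K-u)$ in the shorthand of Section~\ref{sec: decomposition}. This identification uses $\delta_-(x)=\delta_-(x,K-x,\beta)$ with $x=K-u$. Rearranging in matrix form yields \eqref{eq: tilde eta}.

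The main point needing care is the bookkeeping of the boundary and tie conventions. First, strict versus non-strict inequalities matter: $\tau$ is defined through strict exceedance, and $\tilde\eta$ uses $\tau(u)\leqslant\sigma(K-u)$. Second, there is the convention that $J(\tau(u))$ equals the post-transition state when the crossing occurs through a jump at a background transition. Third, in subordinator states the entries of $\bs\delta_-$ vanish, consistently with Lemma~\ref{lemma: delta_-}. None of these affect the argument beyond checking that the two events really partition $\{\tau(u)\leqslant T_\beta\}$ up to null sets, which follows from the absence of downward jumps.
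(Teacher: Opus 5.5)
Your proof is correct and takes the same route as the paper. You split $\{\tau(u)\leqslant T_\beta\}$ according to whether $\sigma(K-u)$ precedes $\tau(u)$, then apply the strong Markov property at $\sigma(K-u)$, using $Y(\sigma(K-u))=-(K-u)$ to identify the second term as $\sum_k\delta_{-,ik}(K-u)\,\eta_{kj}(K)$. (Minor remark: your tie argument needs no exceptional case, since $-(K-u)<u$ holds for every $u\in[0,K]$, including $u=K$, because $K>0$.)
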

\begin{proof}
    As a  first step, observe that we can decompose $\tilde{\eta}_{ij}(u)$ as
    \begin{align*}
        \tilde{\eta}_{ij}(u) =\eta_{ij}(u) - {\mathbb E}_{i}\left(e^{-\alpha \left(Y(\tau(u)) - u\right)}\,\bs{1}_{\{\sigma(K-u) < \tau(u) \leqslant T_{\beta}, J(\tau(u)) = j\}}\right).
    \end{align*}
    Applying the strong Markov property to the second term,
    \begin{align*}
        {\mathbb E}_{i}&\left(e^{-\alpha \left(Y(\tau(u)) - u\right)}\,\bs{1}_{\{\sigma(K-u) < \tau(u) \leqslant T_{\beta}, J(\tau(u)) = j\}}\right) \\
        &= \sum_{k=1}^d\delta_{-,ik}(K-u)\,{\mathbb E}_{k}\left(e^{-\alpha \left(Y(\tau(K)) - K\right)}\,\bs{1}_{\{\tau(K) \leqslant T_{\beta}, J(\tau(K)) = j\}}\right) = \sum_{k=1}^d\delta_{-,ik}(K-u)\,\eta_{kj}(K).
    \end{align*}
    The claimed statement \eqref{eq: tilde eta} follows immediately.
\end{proof}

In the following lemma, which can be considered to be a counterpart of Lemma \ref{lemma: decomp chi(x)}, we show how $\tilde{\bs \chi}(x)$ can be decomposed in terms of $\tilde{\bs \chi}(0)$ and $\tilde{\bs \chi}(K)$. Note that at $x=0$ (i.e., the workload process' lower boundary) this decomposition trivially holds, as ${\bs \delta_-(0)}$ is simply ${\bs I}$ and the entries of $\tilde{\bs \eta}(K, \alpha_3, \beta)$ and $\bs\delta_{\star}(0)$ are all zero. 

\begin{lemma}\label{lemma: decomp chi(x) idle and lost}
    For $\alpha \geqslant0, \beta>0$ and $x\in
    [0,K]$,
    \begin{equation}\label{eq: chi(x) idle and lost}
        \tilde{\bs \chi}(x) = \bs\delta_{-}(x)\,\tilde{\bs \chi}(0) + \tilde{\bs \eta}(K-x, \alpha_3, \beta)\,\tilde{\bs \chi}(K) + \bs\delta_{\star}(x).
    \end{equation}
\end{lemma}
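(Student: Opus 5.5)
The plan mirrors the proof of Lemma~\ref{lemma: decomp chi(x)}: condition on the first of three events (hitting $0$, hitting $K$, killing) and apply the strong Markov property. The new ingredient is that $I$ and $L$ must also be tracked up to the stopping time.

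Fix $x\in[0,K]$, start from $V(0)=x$, $J(0)=i$, and use the free-process stopping times $\sigma(x)$ and $\tau(K-x)$. Before $\min\{\sigma(x),\tau(K-x)\}$ the workload satisfies $V(t)=x+Y(t)\in(0,K]$. Hence $I$ does not increase on this interval: with strictly negative drifts, idleness only accrues after level $0$ is reached. No work is lost on this interval either. The three scenarios are then treated as follows.

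\emph{(a) Killing first}, on $\{T_\beta\leqslant \min\{\sigma(x),\tau(K-x)\}\}$. Here $I(T_\beta)=L(T_\beta)=0$ and $V(T_\beta)=x+Y(T_\beta)$. The contribution is therefore exactly $\bs\delta_\star(x)$ from \eqref{eq: def delta star}, with $\alpha=\alpha_1$.

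\emph{(b) Hitting zero first.} Since $Y$ has no negative jumps, level $0$ is hit exactly and no loss or idle time has accrued by $\sigma(x)$. The strong Markov property at $\sigma(x)$ gives $\sum_k\delta_{-,ik}(x)\,\tilde\chi_{kj}(0)$.

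\emph{(c) Exceeding $K$ first.} At $\tau(K-x)$ the workload is truncated to $K$, and the lost work equals the overshoot $L(\tau(K-x))=Y(\tau(K-x))-(K-x)$. By the strong Markov property (the additive functionals $I$ and $L$ restart additively), the contribution is
\[\sum_k \e_i\!\left(e^{-\alpha_3(Y(\tau(K-x))-(K-x))}\bs 1_{\{\tau(K-x)\leqslant\min\{\sigma(x),T_\beta\},J(\tau(K-x))=k\}}\right)\tilde\chi_{kj}(K).\]
This is precisely $\tilde\eta_{ik}(K-x,\alpha_3,\beta)$ by definition, since $\sigma(K-(K-x))=\sigma(x)$.

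Summing the three contributions and writing the result in matrix form gives \eqref{eq: chi(x) idle and lost}.

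The main subtlety is the boundary case $x=K$ and the correct accounting in scenario (c). We must check that the overshoot, and not something else, is the entire lost work up to the regeneration time. We must also check that ties such as $\tau(K-x)=\sigma(x)$ have probability zero, or are consistent with the conventions for $\tau$, which requires a strict exceedance. For $x=K$ with compound Poisson input, $\tau(0)$ is the first jump time while the drift pulls the workload down; since $\tau(0)$ requires strict exceedance of the level $K$, this is handled correctly by the definitions, in line with the remark after Lemma~\ref{lemma: decomp chi(x)}. For $x=0$, $\sigma(0)=0$ and the identity is trivial, as noted before the lemma.
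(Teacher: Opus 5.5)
Your proposal is correct and follows the paper's own argument. Both use the same first-event decomposition (hitting $0$, exceeding $K$, killing) via the strong Markov property: the lost work up to $\tau(K-x)$ is exactly the overshoot, and neither $I$ nor $L$ increases before the first event, which identifies the three terms as $\bs\delta_-(x)$, $\tilde{\bs\eta}(K-x,\alpha_3,\beta)$ and $\bs\delta_\star(x)$. One harmless slip in your closing remark: for $x=K$, $\tau(0)$ is the first time a jump lifts $Y$ strictly above $0$, which need not be the first jump time.
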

\begin{proof}
By the strong Markov property we have
\begin{equation*}\label{eq: chi_ij(x) idle and lost}
\tilde\chi_{ij}(x) = \sum_{k=1}^d \delta_{-,ik}(x)\,\tilde\chi_{kj}(0) + \sum_{k=1}^d \tilde{\eta}_{ik}(K-x)\,\tilde\chi_{kj}(K) + \delta_{\star ,ij}(x).
\end{equation*}
The validity of this identity can be justified as follows.
The first term on the right-hand side corresponds to the case in which the first event is hitting the level $0$. The second term corresponds to the case in which the first event is hitting the level $K$. In this case, $L(\cdot)$ increases by the amount of the overshoot at time $\tau(K-x)$. Finally, $\delta_{\star ,ij}(x)$ denotes the case in which killing occurs before the process hits either $0$ or $K$. In the first and the last case, both $I(\cdot)$ and $L(\cdot)$ do not increase up to $\sigma(x)$ or $T_{\beta}$, respectively. Rewriting this decomposition in matrix notation yields $\eqref{eq: chi(x) idle and lost}$.
\end{proof}

Finally, we determine the $2d^2$ entries in $\tilde{\bs \chi}(0)$ and $\tilde{\bs \chi}(K)$. To this end, we decompose $\tilde{\bs \chi}(0)$, with $\tilde{\bs \omega} := \beta\bs{1} + \bs{q} + \bs{\lambda}$. 
In the setting considered, the driving process $Y$ being a Markov-modulated compound Poisson process, we can distinguish between three types of events: the arrival of a job,  a background-state transition, and killing. We thus obtain
\begin{align*}
    \tilde\chi_{ij}(0) &= \frac{\tilde\omega_i}{\tilde\omega_i+\alpha_2} \Bigg(\frac{\beta}{\tilde{\omega}_i}+\frac{\lambda_i}{\tilde{\omega}_i} \left(\int_{[0,K]}\mathbb{P}(B_i\in\mathrm{d}y)\tilde\chi_{ij}(y) + \int_{(K,\infty)}\mathbb{P}(B_i\in\mathrm{d}y)e^{-\alpha_3(y-K)}\tilde\chi_{ij}(K)\right)\,+\,\\
    &\:\:\:\:\:\:\quad\sum_{k\neq i}\frac{q_{ik}}{\tilde{\omega}_i} \left(\int_{[0,K]}\mathbb{P}(B_{ik}\in\mathrm{d}y)\tilde\chi_{kj}(y) + \int_{(K,\infty)}\mathbb{P}(B_{ik}\in\mathrm{d}y)e^{-\alpha_3(y-K)}\tilde\chi_{kj}(K)\right)\Bigg);
\end{align*}
the factor ${\tilde\omega_i}/({\tilde\omega_i+\alpha_2})$ here comes from $\mathbb{E}(e^{-\alpha_2 I(T_{\tilde\omega_i})})$ and represents the increase of the idle time until the first event.
Substituting \eqref{eq: chi(x) idle and lost} for $\tilde\chi_{ij}(y)$,
\begin{align}
    \tilde\chi_{ij}(0) &= \frac{\tilde\omega_i}{\tilde\omega_i+\alpha_2} \Bigg(\frac{\beta}{\tilde{\omega}_i}+\frac{\lambda_i}{\tilde{\omega}_i} \int_{[0,K]}\mathbb{P}(B_i\in\mathrm{d}y)\left(\sum_{\ell=1}^d\delta_{-, i\ell}(y)\tilde\chi_{\ell j}(0) + \sum_{\ell=1}^d\delta_{+, i\ell}(y)\tilde\chi_{\ell j}(K) + \delta_{\star,ij}(y)\right)\,+\,\notag\\ 
    &\quad\:\:\:\:\:\:\frac{\lambda_i}{\tilde{\omega}_i}  \int_{(K,\infty)}\mathbb{P}(B_i\in\mathrm{d}y)e^{-\alpha_3(y-K)}\tilde\chi_{ij}(K)\,+\,\notag\\
    &\quad\:\:\:\:\:\:\sum_{k\neq i}\frac{q_{ik}}{\tilde{\omega}_i} \int_{[0,K]}\mathbb{P}(B_{ik}\in\mathrm{d}y)\left(\sum_{\ell=1}^d\delta_{-, k\ell}(y)\tilde\chi_{\ell j}(0) + \sum_{\ell=1}^d\delta_{+, k\ell}(y)\tilde\chi_{\ell j}(K) + \delta_{\star,kj}(y)\right) \,+\,\notag\\
    &\quad\:\:\:\:\:\:\sum_{k\neq i}\frac{q_{ik}}{\tilde{\omega}_i} \int_{(K,\infty)}\mathbb{P}(B_{ik}\in\mathrm{d}y)e^{-\alpha_3(y-K)}\tilde\chi_{kj}(K)\Bigg).\notag
\end{align}
Using Lemma \ref{lemma: decomp chi(x) idle and lost} with $x=K$, we can now rewrite $\tilde{\bs \chi}(K)$, thus yielding a system of linear equations in $\tilde\chi_{ij}(0)$ and $\tilde\chi_{ij}(K)$. This means that we have identified matrices $\tilde{\bs P}$, ${{\smash{\tilde{\bs b}}}^{\{0\}}}$ and ${{\smash{\tilde{\bs b}}}^{\{K\}}}$ so that
\begin{equation*}
    \left(\begin{array}{c} \tilde{\bs \chi}(0) \\ \tilde{\bs \chi}(K)\end{array} \right)= \tilde{\bs P}\left(\begin{array}{c} \tilde{\bs \chi}(0) \\ \tilde{\bs \chi}(K)\end{array} \right) + 
    \left(\begin{array}{l} {{\smash{\tilde{\bs b}}}^{\{0\}}} \\ {{\smash{\tilde{\bs b}}}^{\{K\}}}\end{array} \right).
\end{equation*}
Now it is readily verified that $\bs{I} - \tilde{\bs P}$ is diagonally dominant, essentially following the proof of Lemma~\ref{lemma: diagdom}, which implies that it is invertible. We obtain the following result, where we note that $\bs\delta_{-}(x)$ and $\bs\delta_{\star}(x)$ were already determined in Section \ref{sec: aux}.

\begin{proposition}\label{prop: decomp chi(0) CPP}
       The matrix $\tilde{\bs\chi}(x)$ is given by \eqref{eq: chi(x) idle and lost}, where $\tilde{\bs \eta}(K-x, \alpha_3, \beta)$ follows from Lemma $\ref{lemma: eta tilde}$, and 
\begin{align*}
    \left(\begin{array}{c} \tilde{\bs \chi}(0) \\ \tilde{\bs \chi}(K)\end{array} \right) = \left(\,\bs{I} - \tilde{\bs P}\,\right)^{-1} \left(\begin{array}{l} {{\smash{\tilde{\bs b}}}^{\{0\}}} \\ {{\smash{\tilde{\bs b}}}^{\{K\}}}\end{array} \right).
\end{align*}
\end{proposition}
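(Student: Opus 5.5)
The plan is to combine Lemma \ref{lemma: decomp chi(x) idle and lost} with the first-event decompositions of $\tilde{\bs\chi}(0)$ and $\tilde{\bs\chi}(K)$. This produces a closed linear system in the $2d^2$ unknowns, and strict diagonal dominance then gives a unique solution.

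The first statement, that $\tilde{\bs\chi}(x)$ is given by \eqref{eq: chi(x) idle and lost}, is exactly Lemma \ref{lemma: decomp chi(x) idle and lost}. There, $\tilde{\bs\eta}(K-x,\alpha_3,\beta)$ is expressed through Lemma \ref{lemma: eta tilde} in terms of $\bs\eta$ and $\bs\delta_-$, which are available from Theorem \ref{thrm: eta} and Lemma \ref{lemma: delta_-}.

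For the boundary values, I start from the first-event decomposition of $\tilde\chi_{ij}(0)$ displayed above. In the compound Poisson setting with strictly negative drifts, the first event from level $0$ is a job arrival (rate $\lambda_i$), a background transition (rate $q_{ik}$), or killing (rate $\beta$). During the sojourn at $0$ the idle time accrues, which contributes the factor $\tilde\omega_i/(\tilde\omega_i+\alpha_2)$. A post-event level $y\le K$ is handled by Lemma \ref{lemma: decomp chi(x) idle and lost}, and a level exceeding $K$ contributes $e^{-\alpha_3(y-K)}\tilde{\bs\chi}(K)$. This gives the first $d$ block rows of the system.

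For $\tilde{\bs\chi}(K)$, I use Lemma \ref{lemma: decomp chi(x) idle and lost} at $x=K$. With negative drift, $\tilde{\bs\eta}(0,\alpha_3,\beta)$ is nontrivial, since the process leaves $K$ immediately and must strictly exceed $K$ again. This gives $\tilde{\bs\chi}(K)=\bs\delta_-(K)\tilde{\bs\chi}(0)+\tilde{\bs\eta}(0,\alpha_3,\beta)\tilde{\bs\chi}(K)+\bs\delta_\star(K)$, which supplies the second block row. Reading off coefficients defines $\tilde{\bs P}$ and $\tilde{\bs b}^{\{0\}}$, $\tilde{\bs b}^{\{K\}}$.

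The remaining, and main, step is invertibility of $\bs I-\tilde{\bs P}$, following the proof of Lemma \ref{lemma: diagdom}. In the $K$-rows, the row sums of $\bs\delta_-(K)$ plus $\tilde{\bs\eta}(0,\alpha_3,\beta)$ are bounded by $\sum_\ell(\delta_{-,k\ell}(K)+\delta_{+,k\ell}(K))$, because $e^{-\alpha_3(\cdot)}\le1$ and $\tilde{\bs\eta}(0,0,\beta)=\bs\delta_+(K,0,\beta)$. This sum is the probability that level $0$ or $K$ is reached before $T_\beta$, which is strictly less than $1$ since $T_\beta$ can occur first with positive probability.

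In the $0$-rows, the coefficients are bounded by $\frac{\tilde\omega_i}{\tilde\omega_i+\alpha_2}\cdot\frac{\lambda_i+q_i}{\tilde\omega_i}\le\frac{\lambda_i+q_i}{\beta+\lambda_i+q_i}<1$. Here I again use $\sum_\ell(\delta_{-,k\ell}(y)+\tilde\eta_{k\ell}(K-y))\le1$ and $e^{-\alpha_3(y-K)}\le1$.

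The subtle point is that the bound $\sum_\ell(\delta_{-}+\delta_+)<1$ must be strict uniformly. This holds because killing at rate $\beta$ competes with every path, so in fact the bound is at most $1-\mathbb P(T_\beta<\text{first hitting})$. Strict diagonal dominance then yields invertibility, and the claimed formula follows.
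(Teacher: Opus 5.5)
Your proposal is correct and follows the paper's route: a first-event decomposition of $\tilde{\bs\chi}(0)$, substitution of Lemma~\ref{lemma: decomp chi(x) idle and lost} (at general $y$ and at $x=K$) to close the system, and then strict diagonal dominance of $\bs I-\tilde{\bs P}$ along the lines of Lemma~\ref{lemma: diagdom}. Your explicit bound for the $K$-rows fills in a step the paper only asserts. That bound compares $\tilde{\bs\eta}(0,\alpha_3,\beta)$ entrywise with $\bs\delta_+(K)$ and uses that killing precedes the first hitting time with positive probability, which holds because under strictly negative drifts that hitting time is a.s.\ positive.
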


\section{Numerical examples}\label{sec: num}
In this section, we present numerical results for three representative examples, reporting the mean and variance of the workload, as well as the probabilities of an empty or full buffer over time. These quantities are obtained by first determining their Laplace transforms evaluated at the exponentially distributed time $T_\beta$, and subsequently computed at selected {\it deterministic} time points using numerical Laplace inversion~\cite[\S 8]{AW2}. Implementation details are provided in Appendix~\ref{app: notes on implementation}.

Throughout this section we consider instances with $d=2$ background states and in which the workload process does not increase at transitions of the background process, i.e., $B_{ij} \equiv 0$ for all $i,j\in\{1,2\}$. 
In the first two instances $J$ is recurrent with unitary transition rates, i.e., $q_{12}=q_{21}=1$, whereas in the final instance the second state is absorbing.
Furthermore, all L\'evy processes $Y_i$ considered are either a compound Poisson process (with or without drift), a Brownian motion or a combination of the two. For each instance we  present the Laplace exponents of the L\'evy processes, which are necessarily of the form
\begin{equation}\notag
    \varphi_i(\alpha) = -r_i\alpha + \tfrac{1}{2}\varsigma_i^2\alpha^2 -\lambda_i\left(1-\mathscr{B}_i(\alpha)\right),
\end{equation}
for $\alpha\geqslant0$ and $i\in\{1,2\}$. In other words, when $J$ is  in state $i$, $r_i\in\mathbb{R}$ is the (deterministic) drift, $\varsigma_i\geqslant0$ the `Brownian coefficient', $\lambda_i\geqslant0$ the job arrival rate, and $\mathscr{B}_i(\alpha)$ the LST of the job-size distribution. The instances considered in this section are chosen specifically because they admit exact and concise expressions for the scale matrices ${\boldsymbol{W}}^{(\beta)}$. 

For each instance, we present a figure displaying for $J(0)\in\{1,2\}$ the mean workload and its variance as functions of time, as well as the probability of an empty or full buffer where these are non-trivial. Each plot also includes Monte Carlo-based estimates of the corresponding metrics for comparison: the marks at the integer time points represent the values obtained from numerical Laplace inversion, while the continuous lines show the corresponding Monte Carlo–based estimates. Throughout, we fix the capacity parameter at $K=4$, while allowing the initial workload $x$ to vary.

\subsubsection*{Instance 1}
We first consider a Markov-modulated compound Poisson process; the job sizes in state $i$ are exponentially distributed with mean $\mu_i^{-1}$. 
The parameters chosen are ${\bs r}=(-1,0)$, ${\bs \lambda}=(1,2)$, ${\bs\mu}=(1,1)$, and (evidently) ${\bs\varsigma} =(0,0)$.  We therefore have
\begin{equation*}
   \left(
        \varphi_1(\alpha),
        \varphi_2(\alpha)
    \right) 
    =
    \left(
        \alpha -\frac{\alpha}{1+\alpha},
        -2\frac{\alpha}{1+\alpha}
    \right).
\end{equation*}
In Figure \ref{fig: Model 6} we have plotted the results. The main conclusion is that our numerical results closely match their simulation-based counterparts.

\begin{figure}[ht]
    \centering
    \begin{subfigure}{0.49\textwidth}
        \includegraphics[width=\textwidth]{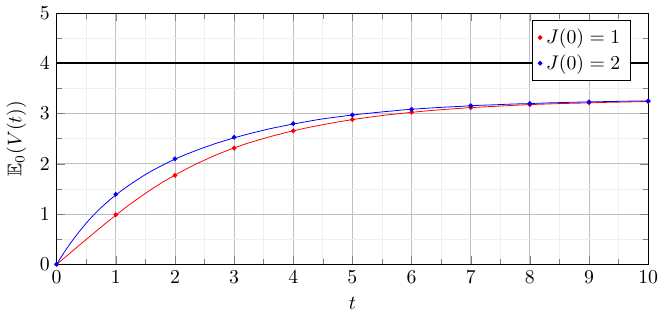}
        \caption{\label{fig: Model 6 EXP}}
    \end{subfigure}
      \hfill
    \begin{subfigure}{0.49\textwidth}
        \includegraphics[width=\textwidth]{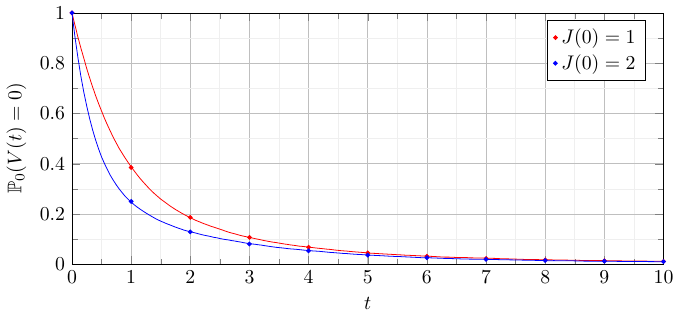}
        \caption{\label{fig: Model 6 empty}}
    \end{subfigure}
    \hfill
    \begin{subfigure}{0.49\textwidth}
        \includegraphics[width=\textwidth]{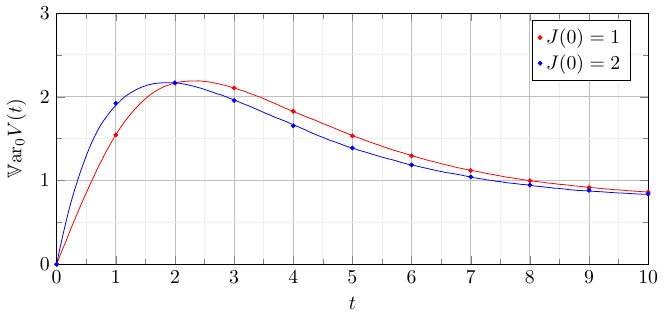}
        \caption{\label{fig: Model 6 var}}
    \end{subfigure}
   \hfill
    \begin{subfigure}{0.49\textwidth}
        \includegraphics[width=\textwidth]{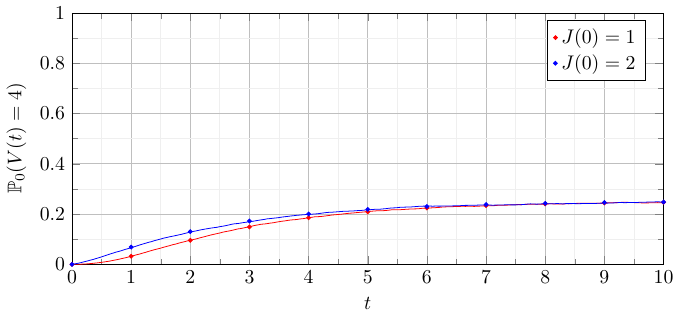}
        \caption{\label{fig: Model 6 full}}
    \end{subfigure}
    \caption{\label{fig: Model 6} Results for instance 1. Left panels: mean ({\sc a}) and variance ({\sc c}) of the workload; right panels: probability of an empty system ({\sc b}) and a full system ({\sc d}). All quantities are shown as functions of time, with  initial workload $x=0$, for different initial background states. }
\end{figure}

\subsubsection*{Instance 2}
For the second instance we consider what could be termed an on/off Brownian motion with negative unitary drift. Concretely, we have $r_1=-1, \varsigma_1=1$ and $\lambda_1=0$, and $Y_2 \equiv 0$, so that
\begin{equation*}
   \left(
        \varphi_1(\alpha) ,
        \varphi_2(\alpha)
    \right)
    =
    \big(
        \alpha + \tfrac{1}{2}\alpha^2,
        0
    \big).
\end{equation*}
Figure \ref{fig: Model 4} shows the mean workload and its variance. For comparison, the purple curves depict the mean and variance of a workload system driven by a single Brownian motion with negative unit drift, obtained by integrating \eqref{eq: dens W init x Levy} and using numerical Laplace inversion. The scale functions in \eqref{eq: dens W init x Levy} are obtained by applying a partial fraction decomposition to $(\varphi(\alpha) - \beta)^{-1}$ and then invoking standard inverse Laplace transform techniques. The dashed lines in Figure~$\ref{fig: Model 4}$ depict the stationary mean and variance of the workload for the single Brownian motion case, as given in \cite[Proposition~11.1]{DM}.
Once again, we observe that our numerical results are in close agreement with the simulation-based estimates. Furthermore, we see that the mean workload and variance of the instance converge to the same steady-state values as the single Brownian motion, albeit more slowly, as expected.

\begin{figure}[ht]
    \centering
    \begin{subfigure}{0.49\textwidth}
        \includegraphics[width=\textwidth]{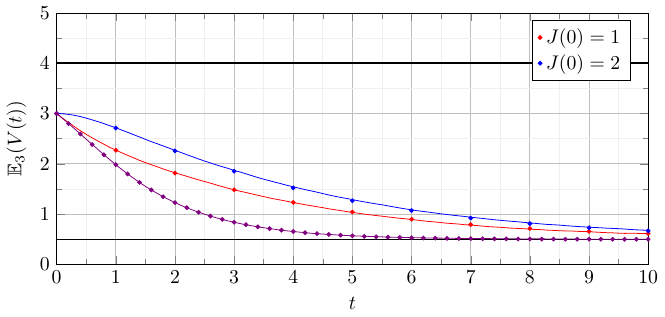}
        \caption{\label{fig: Model 4 EXP}}
    \end{subfigure}
    \hfill
    \begin{subfigure}{0.49\textwidth}
        \includegraphics[width=\textwidth]{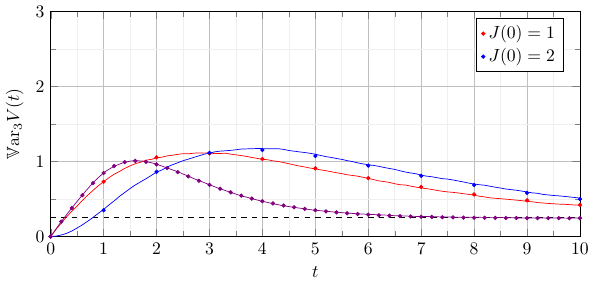}
        \caption{\label{fig: Model 4 var}}
    \end{subfigure}
    \caption{\label{fig: Model 4} Results for instance 2. Mean ({\sc a}) and variance ({\sc b}) of the workload, as functions of time, with initial workload $x=3$, for different initial background states.   The purple line corresponds to $d=1$ and $\varphi(\alpha)=\alpha + \frac{1}{2}\alpha^2$; the dashed lines are the mean and variance of the corresponding workload in stationarity.}
\end{figure}

\subsubsection*{Instance 3}
The third and final instance is a combination of a Brownian motion (state~1) and a compound Poisson process with negative drift and exponentially distributed jumps (state~2), in which state~2 is absorbing, i.e., $q_{12} = 1$ but $q_{21} = 0$. In terms of the model parameters we therefore have $\bs{r} = (-1, -1)$, $\bs{\lambda} = (0,1)$, $\bs{\mu} = (-,1)$ and $\bs{\varsigma} = (1,0)$. We therefore have
\begin{equation*}
    \left(
        \varphi_1(\alpha) ,
        \varphi_2(\alpha)
    \right)
    =
    \left( \alpha+\tfrac{1}{2}\alpha^2, \alpha-\frac{\alpha}{1+\alpha}\right).
\end{equation*}
Figure \ref{fig: Model 7} again confirms that our numerical procedure provides highly accurate output. 

\begin{figure}[ht]
    \centering
    \begin{subfigure}{0.49\textwidth}
        \includegraphics[width=\textwidth]{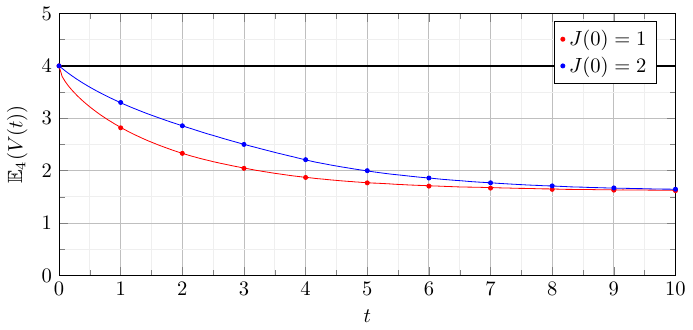}
        \caption{\label{fig: Model 7 EXP}}
    \end{subfigure}
    \hfill
    \begin{subfigure}{0.49\textwidth}
        \includegraphics[width=\textwidth]{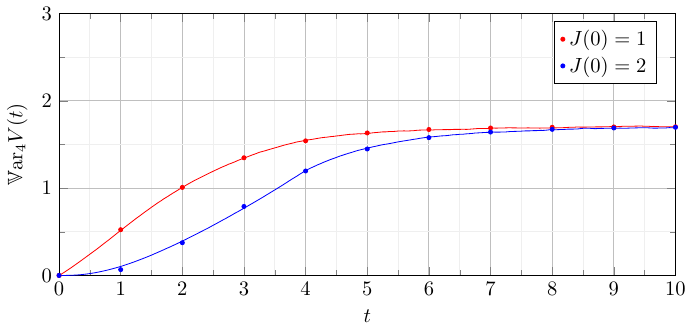}
        \caption{\label{fig: Model 7 var}}
    \end{subfigure}
    \hfill
    \begin{subfigure}{0.49\textwidth}
        \includegraphics[width=\textwidth]{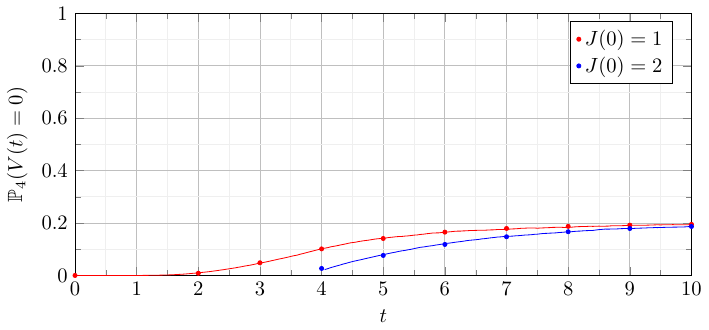}
        \caption{\label{fig: Model 7 empty}}
    \end{subfigure}
    \caption{\label{fig: Model 7} Results for instance 3. Mean ({\sc a}) and variance ({\sc b}) of the workload, and probability of an empty system ({\sc c}), as functions of time, with initial workload $x=K=4$, for different initial background  states. }
\end{figure}

\section{Discussion and concluding remarks}
In this paper, we have developed a procedure to evaluate the LST of the workload for a doubly reflected spectrally one-sided MAP at an exponentially distributed time. The framework we consider is general in several important respects. In particular, no assumptions are imposed on the structure of the background process, which need not be irreducible. Moreover, the spectrally one-sided L\'evy processes underlying the MAP are allowed to be subordinators. Finally, apart from the initial background state, also the initial workload may be chosen arbitrarily, so that the analysis does not require the system to start in an empty or full state.

Arguably, the most significant restriction of the setup considered is the spectrally one-sided nature of the driving MAP. A natural next step would be to address the case in which jumps in one direction have a general distribution, while jumps in the other direction are of phase-type or are assumed to have rational Laplace transforms. For the case of a singly-reflected L\'evy process, this approach has been successfully implemented~\cite{LM}. We also refer, for example, to \cite{ASG,PAR} for illustrations of highly general classes of spectrally two-sided L\'evy processes that may permit relatively explicit evaluation.

In the case where $Y$ is a Markov-modulated compound Poisson process, we have also succeeded in analyzing the cumulative amounts of lost work and idle time (at killing). Another open problem remains the identification of the joint LST that, for $Y$ being a general spectrally one-sided MAP, additionally includes the regulators $U^-(T_\beta)$ and $U^+(T_\beta)$, as defined in \eqref{eq: regu}; cf.\ the concept of {\it loss rate} in \cite{AAGP}.

\appendix

\section{}
\subsection{Proofs}\label{app: proofs lemmas} In this appendix, we collect the proofs of several lemmas and equations.

\subsubsection*{Proof of Lemma \ref{lemma: LST W Levy inti x}}\label{app: proof LST Levy}
Let $\alpha\geqslant0$. By multiplying \eqref{eq: dens W init x Levy} by $e^{-\alpha y}$, integrating $y$ over $(0,K)$ and adding \eqref{eq: empty prob W init x Levy},
\begin{align*}
    \mathbb{E}_x&\left(e^{-\alpha V(T_\beta)}\right) = \mathbb{P}_x\left(V(T_\beta) = 0\right) + \int_{(0,K)}e^{-\alpha y}\mathbb{P}_x\left(V(T_\beta)\in\mathrm{d}y\right) \\
    &= Z^{(\beta)}(K-x)\frac{W^{(\beta)}(0)}{W^{(\beta)}(K)} + \frac{Z^{(\beta)}(K-x)}{W^{(\beta)}(K)}\int_0^K e^{-\alpha y}\frac{\mathrm{d}}{\mathrm{d}y}W^{(\beta)}(y)\,\mathrm{d}y -\beta\int_x^K e^{-\alpha y}W^{(\beta)}(y-x)\,\mathrm{d}y.
\end{align*}
Applying integration by parts, this is equal to
\begin{align*}
    Z^{(\beta)}&(K-x)\frac{W^{(\beta)}(0)}{W^{(\beta)}(K)} + \frac{Z^{(\beta)}(K-x)}{W^{(\beta)}(K)}\left(\left[e^{-\alpha y}W^{(\beta)}(y)\right]_0^K+\int_0^K \alpha e^{-\alpha y}W^{(\beta)}(y)\,\mathrm{d}y\right) \\
    &\:\:\:\:\:-\beta\int_x^K e^{-\alpha y}W^{(\beta)}(y-x)\,\mathrm{d}y\\
    &= e^{-\alpha K}Z^{(\beta)}(K-x) + \frac{Z^{(\beta)}(K-x)}{W^{(\beta)}(K)}\int_0^K \alpha e^{-\alpha y}W^{(\beta)}(y)\,\mathrm{d}y -\beta e^{-\alpha x}\int_0^{K-x} e^{-\alpha y}W^{(\beta)}(y)\,\mathrm{d}y,
\end{align*}
which is equal to the LST in \eqref{eq: LST Levy W init x}. \hfill$\square$

\subsubsection*{Proof of Lemma \ref{lemma: decomp chi(0) and chi(K)}}\label{app: proof decomp chi(0) and chi(K)}
Let $T_{(1)}$ denote the first time the background process $J$ transitions to a different state. 
Recall that $V_i$ corresponds to the L\'evy process $Y_i$ reflected at the boundaries $0$ and $K$. Clearly, conditional on (i)~$J(0)=i$, $J(T_{(1)}) = k$ (i.e., the first transition is from $i$ to $k$), and (ii)~$T_{(1)} < T_\beta$ (i.e., the first transition happens before killing), $V(T_{(1)})$ is distributed as $\hat V_{ik}:=\min\{V_i(T_{\omega_i}) + B_{ik}, K\}$.

By conditioning on the first event, which is either a transition of the background process  or killing, and the strong Markov property, we can decompose $\chi_{ij}(0)$ into
\begin{align*}
    \chi_{ij}(0) &= \bs{1}_{\{i=j\}}\frac{\beta}{\omega_i}\,\mathbb{E}_0\left(e^{-\alpha V_i(T_{\omega_i})}\right)+ \sum_{k=1, k\neq i}^d\frac{q_{ik}}{\omega_i} \int_{[0,K]}\mathbb{P}_{0,i}\left(V(T_{(1)})\in \mathrm{d}y\,|\,J(T_{(1)})=k, T_{(1)}<T_\beta\right)\,\chi_{kj}(y) \\
    &= \bs{1}_{\{i=j\}}\frac{\beta}{\omega_i}\,\mathbb{E}_0\left(e^{-\alpha V_i(T_{\omega_i})}\right)+ \sum_{k=1, k\neq i}^d\frac{q_{ik}}{\omega_i} \int_{[0,K]}\mathbb{P}_{0}\left(\hat V_{ik}\in \mathrm{d}y\right)\,\chi_{kj}(y).
\end{align*}
Substituting the decomposition for $\chi_{kj}(y)$ from Lemma \ref{lemma: decomp chi(x)}, we obtain
\begin{align*}
    \chi_{ij}(0) &= \bs{1}_{\{i=j\}}\frac{\beta}{\omega_i}\,\mathbb{E}_0\left(e^{-\alpha V_i(T_{\omega_i})}\right)+ \sum_{k=1, k\neq i}^d \sum_{\ell=1}^d\frac{q_{ik}}{\omega_i} \int_{[0,K]}\mathbb{P}_{0}\left(\hat V_{ik}\in \mathrm{d}y\right)\delta_{-,k\ell}(y)\chi_{\ell j}(0)\:+ \\
    & \sum_{k=1, k\neq i}^d \sum_{\ell=1}^d\frac{q_{ik}}{\omega_i} \int_{[0,K]}\mathbb{P}_{0}\left(\hat V_{ik}\in \mathrm{d}y\right)\delta_{+,k\ell}(y)\chi_{\ell j}(K)   + \sum_{k=1, k\neq i}^d \frac{q_{ik}}{\omega_i}\int_{[0,K]}\mathbb{P}_{0}\left(\hat V_{ik}\in \mathrm{d}y\right)\delta_{\star,kj}(y).
\end{align*}
From the definitions of ${\bs P}^{\{0,0\}}, {\bs P}^{\{0,K\}}$ and ${\bs b}^{\{0\}}$ we conclude ${\bs \chi}(0) = {\bs P}^{\{0,0\}}{\bs \chi}(0) + {\bs P}^{\{0,K\}}{\bs \chi}(K) + {\bs b}^{\{0\}}.$ The corresponding equation for ${\bs \chi}(K)$ (with initial condition $V(0)=K$) is obtained similarly.
Combining these two matrix equations yields the system that was stated in Lemma \ref{lemma: decomp chi(0) and chi(K)}.\hfill$\square$

\subsubsection*{Proof of Equation \eqref{eq: sys eqs nonsub}}
As noted before, the Laplace transform of the first term of the master equation simply follows from Lemma~\ref{lemma: eta and zeta Levy}. In order to obtain the Laplace transforms of the second and third terms, we first apply the change of variables $z\rightarrow w := u - y + z$ and then interchange the order of integration from $(\mathrm{d}\nu, \mathrm{d}w, \mathrm{d}y, \mathrm{d}u)$ to $(\mathrm{d}\nu, \mathrm{d}w, \mathrm{d}u, \mathrm{d}y)$.  This gives for the second term
\begin{align*}
    & {\bs 1}_{\{i\neq j\}}\frac{q_{ij}}{\omega_i}\,\psi_i(\omega_i)\int_0^\infty \mathbb{P}\left(\bar{Y}_i(T_{\omega_i})\in\mathrm{d}y\right) \int_y^\infty e^{-\gamma u} \int_{u-y}^\infty e^{-\psi_i(\omega_i) (w+y-u)} \int_{w}^\infty \mathbb{P}\left(B_{ij}\in\mathrm{d}\nu\right)\,e^{-\alpha(\nu-w)}\,{\mathrm d}w\,\mathrm{d}u,
\end{align*}
and for the third term
\begin{align*}
    &\:\:\:\:\:+ \sum_{k=1,k\neq i}^d \frac{q_{ik}}{\omega_i}\,\psi_i(\omega_i) \int_0^\infty  \mathbb{P}\left(\bar{Y}_i(T_{\omega_i})\in\mathrm{d}y\right) \int_y^\infty e^{-\gamma u} \int_{u-y}^\infty e^{-\psi_i(\omega_i) (w+y-u)}\\
    & \:\:\:\:\:\:\:\:\:\:\cdot\int_{0}^{w} \mathbb{P}\left(B_{ik}\in\mathrm{d}\nu\right)\,\eta_{kj}(w-\nu, \alpha,\beta)\,{\mathrm d}w\,\mathrm{d}u.
\end{align*}
Secondly, we apply the change of variables $u \to s := u-y$ and interchange order of integration 
\begin{equation*}
    (\mathrm{d}\nu, \mathrm{d}w, \mathrm{d}s, \mathrm{d}y) \to (\mathrm{d}\nu, \mathrm{d}s, \mathrm{d}w, \mathrm{d}y) \to (\mathrm{d}s, \mathrm{d}\nu, \mathrm{d}w, \mathrm{d}y) \to (\mathrm{d}s, \mathrm{d}w, \mathrm{d}\nu, \mathrm{d}y),
\end{equation*}
in that specific order, so that for the second term we obtain
\begin{align*}
    &{\bs 1}_{\{i\neq j\}}\frac{q_{ij}}{\omega_i}\,\psi_i(\omega_i)\int_0^\infty \mathbb{P}\left(\bar{Y}_i(T_{\omega_i})\in\mathrm{d}y\right) e^{-\gamma y} \int_0^\infty \mathbb{P}\left(B_{ij}\in\mathrm{d}\nu\right) e^{-\alpha\nu} \\
    &\:\:\:\:\:\cdot\int_0^\nu e^{-(\psi_i(\omega_i)-\alpha)w} \int_0^w e^{-(\gamma-\psi_i(\omega_i))s}\,\mathrm{d}s\,\mathrm{d}w,
\end{align*}
and for the third term
\begin{align*}
    &\sum_{k=1,k\neq i}^d \frac{q_{ik}}{\omega_i}\,\psi_i(\omega_i) \int_0^\infty \mathbb{P}\left(\bar{Y}_i(T_{\omega_i})\in\mathrm{d}y\right) e^{-\gamma y} \int_0^\infty \mathbb{P}\left(B_{ik}\in\mathrm{d}\nu\right) \\
    & \:\:\:\:\:\:\:\:\:\:\cdot\int_{\nu}^{\infty} e^{-\psi_i(\omega_i) w}\,\eta_{kj}(w-\nu, \alpha,\beta)\int_0^w e^{-(\gamma-\psi_i(\omega_i))s}\,\mathrm{d}s\,{\mathrm d}w.
\end{align*}
Lastly, we apply in the third term the change of variables $w \to \upsilon:= w-\nu$. The resulting quadruple integrals can be evaluated in the standard manner, so as to obtain \eqref{eq: sys eqs nonsub}. \hfill$\square$

\subsection{Notes on implementation}\label{app: notes on implementation}
This second appendix explains how we implemented our procedure to generate the figures presented in this paper and it discusses some of the challenges encountered. We emphasize, however, that this is not a comprehensive guide to implementing the procedure in full generality, as certain model instances involve additional subtleties. The specific cases studied in the numerical section allowed us to simplify the derivation of several objects and to obtain some inverse Laplace transforms in explicit form.

\medskip

To evaluate the workload moments, we do not require the full LST of $V(T_\beta)$ as given by the decomposition in Lemma \ref{lemma: decomp chi(x)}, but can instead derive them directly via a similar decomposition. Specifically, by taking the $n$-th derivative of ${\bs \chi}(x)$ with respect to $\alpha$ in \eqref{eq: chi(x)} and letting $\alpha\downarrow0$ (or, alternatively, by a direct argumentation), we find that the $n$-th moment obeys
\begin{align}
    \mathbb{E}_{i,x}&\left(V(T_\beta)^n\,{\bs 1}_{\{J(T_\beta = j)\}}\right) = \sum_{k=1}^d\delta_{-,ik}(x)\,\mathbb{E}_{k,0}\left(V(T_\beta)^n\,{\bs 1}_{\{J(T_\beta = j)\}}\right)\:+\notag \\ &\quad\sum_{k=1}^d\delta_{+,ik}\,(x)\,\mathbb{E}_{k,K}\left(V(T_\beta)^n\,{\bs 1}_{\{J(T_\beta = j)\}}\right)+ \mathbb{E}_{i,x}\left(V(T_\beta)^n\,{\bs 1}_{\{T_\beta<\min\{\sigma(x), \tau(K-x)\}, J(T_\beta) =j\}}\right), \label{eq: deco mom}
\end{align}
for $x\in[0,K], \beta>0$ and $i,j\in\defD $, with ${\bs \delta}_-(x)$ and ${\bs \delta}_+(x)$ as identified in Section \ref{sec: hitting probabilities}. 
The quantities
\[\mathbb{E}_{k,0}\left(V(T_\beta)^n{\bs 1}_{\{J(T_\beta = j)\}}\right)\:\:\mbox{and}\:\:\mathbb{E}_{k,K}\left(V(T_\beta)^n{\bs 1}_{\{J(T_\beta = j)\}}\right),\]
as appearing in the right-hand side of \eqref{eq: deco mom},
can be found by taking the $n$-th derivative with respect to $\alpha$ in the matrix equation of Lemma \ref{lemma: decomp chi(0) and chi(K)} and then letting $\alpha\downarrow0$. Noting that ${\bs P}$ does not depend on~$\alpha$, this leads to the same system of linear equations but with  adjusted  ${\bs b}^{\{0\}}$ and ${\bs b}^{\{K\}}$. Recalling the definition of ${\bs b}^{\{0\}}$ and ${\bs b}^{\{K\}}$, this adjustment involves (i) the $n$-th moment of  $V_i(T_\beta)$  (in the case of being killed before a transition of the background process), facilitated by Lemma \ref{lemma: LST W Levy inti x}, and (ii) the $n$-th derivative of ${\bs \delta}_\star(x)$ to $\alpha$ at $\alpha\downarrow 0$  in \eqref{eq: delta*}, bearing in mind that in the notation ${\bs \delta}_\star(x)$ that we introduced in \eqref{eq: def delta star} we have suppressed the dependence on $\alpha$.  Note that for (ii) we need ${\bs \eta}(u,\alpha,\beta)$ for $u\in\{K-x,K\}$, which is achieved by numerical inversion of ${\bs \eta}(\alpha,\beta, \gamma)$ (as identified in Section~\ref{sec: overshoot transform}).
The last term in the right-hand side of \eqref{eq: deco mom} can also be evaluated by taking the $n$-th derivative of ${\bs \delta}_\star(x)$ to $\alpha$ at $\alpha\downarrow 0$.

\medskip

Along the same lines, the probability of an empty system can be found. Using the short notation ${\mathscr E}_{j,0}:=\{V(T_\beta) = 0, J(T_\beta) = j\}$, we have the decomposition
\begin{align*}
    \mathbb{P}_{i,x}({\mathscr E}_{j,0}) =& \sum_{k=1}^d\delta_{-,ik}(x)\,\mathbb{P}_{i,0}({\mathscr E}_{j,0})+\sum_{k=1}^d\delta_{+,ik}(x)\,\mathbb{P}_{i,K}({\mathscr E}_{j,0})+ \mathbb{P}_{i,x}({\mathscr E}_{j,0}, T_\beta<\min\{\sigma(x), \tau(K-x)\}).
\end{align*}
It is noted that $\mathbb{P}_{i,0}({\mathscr E}_{j,0})$ and $\mathbb{P}_{i,K}({\mathscr E}_{j,0})$ can be computed by taking the limit $\alpha\to \infty$ in Lemma \ref{lemma: decomp chi(0) and chi(K)}; this again amounts to solving the linear system with adjusted ${\bs b}^{\{0\}}$ and ${\bs b}^{\{K\}}$. The right-most probability is equal to
\begin{equation}
    \mathbb{P} (Y(T_\beta) = -x, T_\beta<\min\{\sigma(x), \tau(K-x)\}, J(T_\beta) = j\,|\,J(0)=i); \label{eq: zero prb}
\end{equation}
{note that $Y(T_\beta)<-x$ cannot occur, since it would imply $\sigma(x)\leqslant T_\beta$, violating the condition $T_\beta<\min\{\sigma(x), \tau(K-x)\}$. Hence, \eqref{eq: zero prb} is zero for all $x\in(0,K]$: the considered event rules out that $Y$ is a subordinator, and when $Y$ is not a subordinator clearly ${\mathbb P}(Y(T_\beta)=-x)=0$.}

{For $x=0$, some care is required. When $i\in\defDmin $, that is, the initial state is a subordinator state, \eqref{eq: zero prb} is again zero, since in this case $\sigma(0)=0$ almost surely. When $i\in\defDplus $, however, this probability need not vanish. The reason is that $\sigma(0)=\infty$, while ${\mathbb P}(Y(T_\beta)=0)$, and hence also ${\mathbb P}(V(T_\beta)=0)$, may be positive. An example is a compound Poisson process without drift: here $\sigma(0)=\infty$, but $Y(T_\beta)$ can equal zero if no jumps and no background transitions occur before killing. Such a process was encountered in Instance~1 in Section \ref{sec: num}, where we obtained the probability in question by taking the limit $\alpha\to\infty$ of ${\bs\delta}_\star(x)$.}


{\small
\begin{thebibliography}{99}%

\bibitem{AW}
{\sc J. Abate} and {\sc W. Whitt} (1995).
Numerical inversion of Laplace transforms of probability distributions. {\it ORSA Journal on Computing} {\bf 7}, pp. 36-43.

\bibitem{AW2}
{\sc J. Abate} and {\sc W. Whitt} (1992). 
The Fourier-series method for inverting transforms of probability distributions. {\it Queueing Systems} {\bf 10}, pp. 5–87.

\bibitem{AAGP}
{\sc  L.N. Andersen, S. Asmussen, P.  Glynn,} and {\sc M.  Pihlsgaard} (2015). 
L\'evy processes with two-sided reflection. In: {\it L\'evy Matters V. Functionals of L\'evy Processes} (O. Barndorff-Nielsen, J. Bertoin, J. Jacod, and K. Kl\"uppelberg, eds.), pp. 67–182. Springer, New York.

\bibitem{AND}
{\sc L.N. Andersen} and {\sc M. Mandjes} (2009). 
Structural properties of reflected L\'evy processes. {\it Queueing Systems} {\bf  63}, pp.\ 301–322. 

\bibitem{ASG}
{\sc N.M. Asghari, P. den Iseger,} and {\sc M. Mandjes} (2014). Numerical techniques in Lévy fluctuation theory. {\it Methodology and Computing in Applied Probability} {\bf 16}, pp.\ 31-52.

\bibitem{ASM2}
{\sc S. Asmussen} (2003).
{\em Applied Probability and Queues}, 2nd ed.
Springer, New York.

\bibitem{AA}
{\sc S. Asmussen} and {\sc H. Albrecher} (2010).
{\em Ruin Probabilities}, 2nd ed.
World Scientific, Singapore.



\bibitem{BER}
{\sc J. Bertoin} (1996).
{\it L\'evy Processes}. Cambridge University Press, Cambridge. 


\bibitem{JWC}
{\sc J. Cohen} (1969). {\it The Single-Server Queue}, 1st ed. North Holland, Amsterdam. 


\bibitem{IV}
{\sc B. D'Auria, J. Ivanovs, O. Kella}, and {\sc M. Mandjes} (2010).
First passage of a Markov additive process and generalized Jordan chains.
{\it Journal of Applied Probability} {\bf 47}, pp.\ 1048-1057.

\bibitem{DM}
{\sc K. D\c{e}bicki} and {\sc M. Mandjes} (2015).
{\it Queues and L\'evy fluctuation theory.} Springer, New York. 

\bibitem{DIEK}
{\sc T. Dieker} and {\sc M. Mandjes} (2011). 
Extremes of Markov-additive processes with one-sided jumps, with queueing applications. {\it
Methodology and Computing in Applied Probability} {\bf  13}, pp. 221-267.

\bibitem{HOU}
{\sc B. Housley} (2024). 
Extended L\'evy’s theorem for a two-sided reflection.
{\it Electronic Communications in Probability} {\bf 29}, paper no.\ 15. 

\bibitem{dI}
{\sc P. den Iseger} (2006). 
Numerical transform inversion using Gaussian quadrature.
{\it Probability in the Engineering and  Informational Sciences} {\bf 20}, pp.\ 1-44.

\bibitem{IV2}
{\sc J. Ivanovs} (2010).
Markov-modulated Brownian motion with two reflecting barriers. 
{\it Journal of Applied Probability} {\bf 47}, pp.\ 1034-1047.

\bibitem{thesis_IV}
{\sc J. Ivanovs} (2011).
One-sided Markov additive processes and related exit problems.
PhD thesis, University of Amsterdam, {\footnotesize \url{
https://pure.uva.nl/ws/files/1408093/94456_0_Thesis.pdf}}

\bibitem{IBM}
{\sc J. Ivanovs, O.\ Boxma,} and {\sc M. Mandjes} (2010).
Singularities of the matrix exponent of a Markov additive process with one-sided jumps.
{\it Stochastic Processes and their Applications} {\bf 120}, pp.\ 1776-1794.

\bibitem{IP}
{\sc J. Ivanovs} and {\sc Z. Palmowski} (2012).
Occupation densities in solving exit problems for Markov additive processes and their reflections.
{\it Stochastic Processes and their Applications} {\bf 122}, pp.\ 3342-3360.

\bibitem{MKD}
{\sc L. van Kreveld, M. Mandjes}, and {\sc J.-P. Dorsman} (2022).
Extreme value analysis for a Markov additive process driven by a nonirreducible background chain. {\it Stochastic Systems} {\bf 12}, pp.\ 293-317.

\bibitem{KRU}
{\sc L. Kruk, J.  Lehoczky, K. Ramanan,} and {\sc S. Shreve} (2007). 
An explicit formula for the Skorokhod map on $[0,a]$. {\it Annals of  Probability} {\bf 35}, pp.\ 1740–1768. 

\bibitem{PAR}
{\sc A. Kuznetsov, A. Kyprianou,} and {\sc J. Pardo} (2012).
Meromorphic Lévy processes and their fluctuation identities.
{\it Annals of Applied Probability} {\bf 22}, pp.\ 1101-1135.

\bibitem{KUZ}
{\sc A. Kuznetsov, A. Kyprianou,} and {\sc V. Rivero} (2013).
The theory of scale functions for spectrally
negative L\'evy processes.
In: {\it L\'evy Matters II. Recent Progress in Theory and Applications: Fractional Lévy Fields, and Scale Functions} (S. Cohen, ed.), pp. 97-186. Springer, New York.

\bibitem{KYP}
{\sc A.  Kyprianou} (2006). 
{\it Introductory Lectures on Fluctuations of L\'evy Processes with Applications}. Springer, New York.

\bibitem{KR}
{\sc A. Kyprianou} and {\sc V. Rivero} (2026).
The strong law of large numbers and a functional central limit theorem for general Markov additive processes. {\it Queueing Systems}, to appear.
{\footnotesize \url{
https://arxiv.org/abs/2505.10956}}


\bibitem{LM}
{\sc A. Lewis} and {\sc E. Mordecki} (2008). Wiener-Hopf factorization for L\'evy processes having positive jumps with rational transforms. {\it Journal of Applied Probability} {\bf 45}, pp.\ 118–134.

\bibitem{MB}
{\sc M. Mandjes} and {\sc O. Boxma} (2023).
{\it The Cramér-Lundberg Model and Its Variants}. Springer, New York.

\bibitem{PI}
{\sc M. Pistorius} (2003).
On doubly reflected completely asymmetric L\'evy processes. {\it Stochastic Processes and their Applications} {\bf 107}, pp.\ 131-143.


\bibitem{R1}
{\sc P. Roes} (1970).
The finite dam. {\it Journal of Applied Probability} {\bf 7}, pp.\ 316-326. 

\bibitem{R2}
{\sc P. Roes.} (1970).
The finite dam II.
{\it Journal of Applied Probability} {\bf 7}, pp.\ 599-616.

\bibitem{SUR}
{\sc B. Surya} (2008). 
Evaluating scale functions of spectrally negative L\'evy processes.  {\it Journal of Applied Probability} {\bf 45}, pp.\ 135-149.

\bibitem{TAK}
{\sc L. Tak\'acs} (1967). 
{\it  Combinatorial Methods in the Theory of Stochastic Processes.} Wiley, New York.

\end{thebibliography}}
\end{document}